\newtheorem{thm}{Theorem}[section]
\newtheorem{prop}[thm]{Proposition}
\newtheorem{lem}[thm]{Lemma}
\newtheorem{cor}[thm]{Corollary}
\theoremstyle{definition}
\newtheorem{defn}[thm]{Definition}
\theoremstyle{remark}
\newtheorem{remk}[thm]{Remark}
\newtheorem{remks}[thm]{Remarks}
\newtheorem{exm}[thm]{Example}
\newtheorem{exms}[thm]{Examples}
\newtheorem{notat}[thm]{Notation}
\numberwithin{equation}{section}
\newcommand{\thmref}{Theorem~\ref}
\newcommand{\propref}{Proposition~\ref}
\newcommand{\corref}{Corollary~\ref}
\newcommand{\lemref}{Lemma~\ref}
\newcommand{\sA}{{\mathcal A}}
\newcommand{\sC}{{\mathcal C}}
\newcommand{\sD}{{\mathcal D}}
\newcommand{\sE}{{\mathcal E}}
\newcommand{\sF}{{\mathcal F}}
\newcommand{\sG}{{\mathcal G}}
\newcommand{\sH}{{\mathcal H}}
\newcommand{\sI}{{\mathcal I}}
\newcommand{\sK}{{\mathcal K}}
\newcommand{\sO}{{\mathcal O}}
\newcommand{\sP}{{\mathcal P}}
\newcommand{\sT}{{\mathcal T}}
\newcommand{\sU}{{\mathcal U}}
\newcommand{\sW}{{\mathcal W}}
\newcommand{\sX}{{\mathcal X}}
\newcommand{\sY}{{\mathcal Y}}
\newcommand{\sZ}{{\mathcal Z}}
\newcommand{\A}{{\mathbb A}}
\newcommand{\K}{{\mathbb K}}
\newcommand{\N}{{\mathbb N}}
\renewcommand{\P}{{\mathbb P}}
\newcommand{\Z}{{\mathbb Z}}
\newcommand{\surj}{\twoheadrightarrow}
\newcommand{\inj}{\hookrightarrow}
\newcommand{\red}{{\rm red}}
\newcommand{\Hom}{{\rm Hom}}
\newcommand{\Spec}{{\rm Spec \,}}
\newcommand{\Sch}{{\operatorname{\mathbf{Sch}}}}
\newcommand{\Tot}{{\operatorname{\rm Tot}}}
\newcommand{\Nis}{{\operatorname{Nis}}}
\newcommand{\ds}{{/\kern-3pt/}}
\renewcommand{\red}{{\rm red}}
\newcommand{\Proj}{{\operatorname{Proj}}}
\newcommand{\Mod}{{\mathbf{Mod}}}
\newcommand{\ov}{\overline}
\renewcommand{\dim}{\text{\rm dim}}
\newcommand{\tuborg}{\left\{\begin{array}{ll}}
\newcommand{\sluttuborg}{\end{array}\right.}
\newcommand{\To}{\longrightarrow}
\begin{document}
\title{Algebraic $K$-theory of quotient stacks}
\author{Amalendu Krishna, Charanya Ravi}
\address{School of Mathematics, Tata Institute of Fundamental Research,  
1 Homi Bhabha Road, Colaba, Mumbai, India}
\email{amal@math.tifr.res.in}

\address{Department of Mathematics, University of Oslo, P.O. Box 1053 Blindern, 
N-0316 Oslo, Norway}
\email{charanyr@math.uio.no}

\keywords{Algebraic $K$-theory, Singular schemes, Group actions, Stacks}        

\baselineskip13.9pt

\subjclass[2010]{Primary 19E08; Secondary 14L30}

\begin{abstract}
We prove some fundamental results like localization, excision, 
Nisnevich descent, and the regular blow-up formula for the 
algebraic $K$-theory of certain stack quotients of schemes with
affine group scheme actions. We show that the homotopy $K$-theory of
such stacks is homotopy invariant. This implies a similar
homotopy invariance property of the algebraic $K$-theory with  
coefficients. 
\end{abstract}

\maketitle
\section{Introduction}\label{sec:Intro}
The higher algebraic $K$-theories of Quillen and Thomason--Trobaugh
are among the most celebrated discoveries in Mathematics.
Fundamental results like localization, excision, 
Nisnevich descent, and the blow-up formula have played pivotal roles in 
almost every recent breakthrough in the $K$-theory of schemes 
(e.g., see \cite{Corti}, \cite{CHSW} and \cite{Schl-1}).   
The generalization of these results
to equivariant $K$-theory is the theme of this paper.

The significance of equivariant $K$-theory \cite{Thom}
in the study of the ordinary (non-equivariant) $K$-theory is essentially based on two principles. 
First, it often turns out 
that the presence of a group action allows one to exploit 
representation-theoretic tools to study equivariant $K$-theory. Second, 
there are results (see, for instance, \cite[Theorem~32]{Merk})
which directly connect equivariant algebraic $K$-theory with the ordinary 
$K$-theory of schemes with group action.     
These principles have been effectively used in the past to study both
equivariant and ordinary algebraic $K$-theory
(see, for instance, \cite{AR}, \cite{VV}).
In addition, equivariant $K$-theory often allows one to
understand various other cohomology theories of moduli stacks and moduli spaces from 
the $K$-theoretic point of view.

However, any serious progress 
towards the applicability of equivariant $K$-theory (of vector
bundles) requires analogues for quotient stacks of the
fundamental results of Thomason--Trobaugh. 
The goal of this paper is to establish these results so that a 
very crucial gap in the study of the $K$-theory of quotient stacks could be 
filled.
Special cases of these results were earlier 
proven in \cite{AK}, \cite{KO} and \cite{HKO}.
Here is a summary our main results.
The precise statements and the underlying notation
can be found in the body of the text. We fix a field $k$.

\begin{thm}\label{thm:Main-1}
Let $\sX$ be a nice quotient stack over $k$ with resolution property. 
Let $\K$ denote the (non-connective)  
$K$-theory presheaf on the 2-category of nice quotient stacks. 
Let $\sZ \inj \sX$ be a closed substack with open complement $\sU$. 
Then the following hold.
\begin{enumerate}
\item
There is a homotopy fibration sequence of $S^1$-spectra
\[
\K(\sX ~ {\rm on} ~ \sZ) \to \K(\sX) \to \K(\sU).
\]
\item
The presheaf $\sX \mapsto \K(\sX)$ satisfies excision.
\item
The presheaf $\sX \mapsto \K(\sX)$ satisfies Nisnevich descent.
\item
The presheaf $\sX \mapsto \K(\sX)$ satisfies descent  
for regular blow-ups.
\end{enumerate}
\end{thm}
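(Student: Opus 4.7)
The strategy is to work with the non-connective $K$-theory of the Waldhausen category $\mathrm{Perf}(\sX)$ of perfect complexes on the quotient stack $\sX=[X/G]$, and to import the Thomason--Trobaugh machinery into the equivariant/stacky setting. The niceness and resolution-property hypotheses are precisely what guarantee that $\mathrm{Perf}(\sX)$ is suitably generated by equivariant vector bundles and that $\K(\sX)$ so defined agrees with the usual equivariant $K$-theory object of interest in connective degrees.

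The heart of the paper is part (1). Following Thomason--Trobaugh, let $\mathrm{Perf}_{\sZ}(\sX) \subset \mathrm{Perf}(\sX)$ denote the full subcategory of complexes acyclic on $\sU$. Restriction induces an exact functor $\mathrm{Perf}(\sX)/\mathrm{Perf}_{\sZ}(\sX) \to \mathrm{Perf}(\sU)$, which one shows is fully faithful and cofinal; Waldhausen's cofinality and fibration theorems at the level of non-connective spectra then deliver the claimed localization sequence. Full faithfulness is essentially formal; cofinality --- every perfect complex on $\sU$ is a summand of the restriction of one on $\sX$ --- is the crux. In the scheme setting this is Thomason's approximation theorem, proved via a Koszul-style extension. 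Equivariantly, I would use the resolution property to extend a perfect complex on $\sU$ to a bounded complex of equivariant vector bundles on all of $\sX$ and then truncate appropriately, the passage to the non-connective spectrum absorbing any $K_0$-level summand obstruction via Bass's fundamental theorem. I expect this equivariant extension step, together with the need to accommodate non-reductive $G$, to be the main technical obstacle.

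Parts (2) and (3) should follow formally from (1). For excision, a representable \'etale morphism $f\colon \sX' \to \sX$ that is an isomorphism over $\sU$ induces an equivalence $\mathrm{Perf}_{\sZ}(\sX) \simeq \mathrm{Perf}_{\sZ'}(\sX')$ (reducible to the scheme case via the resolution property), hence an equivalence between the fibre $K$-theories appearing in the two localization sequences of (1). For Nisnevich descent, one invokes the Brown--Gersten / Morel--Voevodsky criterion: a presheaf of spectra satisfies Nisnevich descent iff it sends elementary distinguished squares to homotopy cartesian squares, and for $\K$ this is exactly the combination of the localization of (1) and the excision of (2) just established.

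For part (4), given a regular closed immersion $\sZ \inj \sX$ of codimension $c$ with blow-up $\pi\colon \tilde{\sX} \to \sX$ and exceptional divisor $E = \pi^{-1}(\sZ)$, I would prove that the evident square of pullbacks among $\K(\sX)$, $\K(\sZ)$, $\K(\tilde{\sX})$, and $\K(E)$ is homotopy cartesian via an equivariant Thomason--Orlov semiorthogonal decomposition
\[
\mathrm{Perf}(\tilde{\sX}) = \langle \mathrm{Perf}(\sZ), \ldots, \mathrm{Perf}(\sZ), \mathrm{Perf}(\sX) \rangle,
\]
with $c-1$ copies of $\mathrm{Perf}(\sZ)$, combined with the equivariant projective bundle formula for $E \to \sZ$ (available because $E$ is the projectivization of a $G$-equivariant normal bundle). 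The remaining obstacle is establishing the semiorthogonal decomposition at the stacky level; I would reduce it to the underlying scheme case by Nisnevich-local descent, using the descent theorem of part (3) already in hand.
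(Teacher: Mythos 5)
Your overall architecture is right and your parts (2)–(3) match the paper: excision comes from an equivalence of the supported perfect subcategories, and Nisnevich descent is the Morel--Voevodsky/Voevodsky cd-structure criterion applied to the combination of localization and excision. But for part (1) you have left the crux unresolved, and the method you sketch for it is not the one that works. You propose to prove cofinality of $D^{\rm perf}(\sX)/D^{\rm perf}_{\sZ}(\sX) \to D^{\rm perf}(\sU)$ by directly extending a perfect complex on $\sU$ to a bounded complex of equivariant vector bundles on $\sX$ ``Koszul-style'' and truncating; you yourself flag this as the main technical obstacle. The paper does not do this. Its key input is that for a \emph{nice} quotient stack the unbounded derived category $D_{qc}(\sX)$ is compactly generated and its compact objects are exactly the perfect complexes (Hall--Rydh, Theorem C and Proposition 8.4 of their paper on perfect complexes on stacks, combined with Neeman's Brown-representability criterion); one then feeds the compactly generated localization sequence $D_{qc,\sZ}(\sX) \to D_{qc}(\sX) \to D_{qc}(\sU)$ into Neeman's abstract localization theorem to get that the functor on compact objects is fully faithful and an equivalence up to direct factors, and Waldhausen localization plus the Bass construction of the non-connective spectrum finishes the argument. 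This is precisely where the ``nice'' hypothesis enters (it makes $\sX$ concentrated, so that perfect complexes are compact); your proposal never identifies this, and without compact generation the direct extension argument for stacks is not known to go through. So the missing idea is the identification perfect $=$ compact and the compact generation of $D_{qc}$, $D_{qc,\sZ}$, $D_{qc}(\sU)$.

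For part (4) there is a second, more localized problem. The filtration of $D^{\rm perf}(\widetilde{\sX})$ by copies of $D^{\rm perf}(\sZ)$ twisted by $\sO(-l)$ together with $Lp^{*}D^{\rm perf}(\sX)$ is indeed what the paper uses (following Corti\~nas--Haesemeyer--Schlichting--Weibel and Thomason), but your plan to establish it ``by reducing to the underlying scheme case by Nisnevich-local descent, using part (3)'' does not work: a smooth atlas $U \to \sX$ is not a Nisnevich cover of the stack, and the generation statement $D^{\rm perf}_{d-1} = D^{\rm perf}(\widetilde{\sX})$ is not a statement that descends along covers in any evident way. What the paper actually reduces to schemes are the pointwise assertions --- that $F \to Rp_{*}Lp^{*}F$ is a quasi-isomorphism, that $Rp_{*}(\sO(-j)\otimes p^{*}F)$ vanishes for $1 \le j \le d-1$, and the acyclicity criterion --- and it does so by flat base change along a smooth atlas ([HR, Corollary 4.13]), since these are fppf-local statements; the categorical generation and semiorthogonality arguments are then run directly on the stack exactly as in the scheme case. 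You should replace the appeal to Nisnevich descent by flat base change for these local verifications.
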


\begin{thm}\label{thm:Main-2}
The non-connective homotopy $K$-theory presheaf
$KH$ on the 2-category of nice quotient stacks with resolution property
satisfies the following.
\begin{enumerate}
\item
It is invariant under every vector bundle morphism (Thom isomorphism for
stacks).
\item
It satisfies localization, excision, Nisnevich descent and
descent for regular blow-ups.
\item
If $\sX$ is the stack quotient of a scheme by a finite nice group, 
then $KH(\sX)$ is invariant under infinitesimal extensions.
\end{enumerate}
\end{thm}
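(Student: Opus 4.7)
The plan is to define $KH(\sX)$ as the geometric realization $|n \mapsto \K(\sX \times \Delta^n)|$ of the simplicial spectrum obtained by evaluating the presheaf $\K$ of \thmref{thm:Main-1} on the standard algebraic simplices $\Delta^n = \Spec(k[t_0,\dots,t_n]/(t_0 + \cdots + t_n - 1))$. Each product $\sX \times \Delta^n$ remains a nice quotient stack with the resolution property, so \thmref{thm:Main-1} applies levelwise. With this definition, part (2) follows essentially formally from \thmref{thm:Main-1}: a closed substack $\sZ \inj \sX$ with open complement $\sU$ produces, after taking product with $\Delta^\bullet$, a simplicial closed-open pair, and similarly for Nisnevich and regular blow-up squares. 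The localization fiber sequences and the descent cartesian squares given by \thmref{thm:Main-1} thus assemble into simplicial diagrams of homotopy (co)cartesian squares, and geometric realization of simplicial spectra preserves them.

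For the Thom isomorphism in part (1), I would proceed in two stages. Using the Nisnevich descent for $KH$ just established in (2) together with the fact that a vector bundle on a nice quotient stack becomes trivial over a suitable Nisnevich cover, the problem reduces to showing $KH(\sX) \simeq KH(\sX \times \A^r)$ for trivial bundles, and induction on $r$ further reduces to the case $r = 1$. The invariance $KH(\sX) \simeq KH(\sX \times \A^1)$ then follows from a standard cosimplicial homotopy: the multiplication map $\A^1 \times \Delta^\bullet \to \Delta^\bullet$ in barycentric coordinates furnishes an explicit simplicial homotopy between the two maps $\K(\sX \times \A^1 \times \Delta^\bullet) \rightrightarrows \K(\sX \times \Delta^\bullet)$ induced by the zero section and the projection, identifying them after geometric realization.

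Part (3) is the main substantive content. Let $G$ be a finite nice group acting on a scheme $X$ with $G$-stable nilpotent ideal $I \subset \sO_X$ defining a closed subscheme $X_0 \subset X$, and set $\sX = [X/G]$, $\sX_0 = [X_0/G]$. Using Nisnevich descent and localization from (2), I reduce to the affine case $X = \Spec(A)$, where the statement becomes the contractibility of the homotopy fiber of $KH(\sX) \to KH(\sX_0)$. I would then adapt Weibel's original argument to the equivariant setting: establish a $G$-equivariant Bass fundamental theorem relating $K_n^G$ of $A$, $A[t]$, $A[t^{-1}]$ and $A[t, t^{-1}]$, using the projective bundle formula for $[\P^1_A/G]$ together with the Mayer--Vietoris sequence coming from the standard Nisnevich cover $\P^1 = \A^1 \cup \A^1$ (an instance of (2)); combining this with the homotopy invariance from (1) forces the equivariant $NK$-groups to be killed after taking the simplicial realization defining $KH$, which forces the relative $KH$-spectrum of the nil-ideal to be contractible. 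The main obstacle is verifying that all auxiliary stacks entering this argument, such as $[\P^1_A/G]$ and its open $G$-stable subschemes, remain nice quotient stacks with the resolution property so that \thmref{thm:Main-1} applies; for $G$ finite and nice this should reduce to the stability of these conditions under $\P^1$-bundles and open $G$-stable restrictions.
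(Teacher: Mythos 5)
Your treatment of part (2) is essentially the paper's: $KH$ is defined as the realization of $n\mapsto \K(\sX\times\Delta_n)$, each level is again a nice quotient stack with the resolution property, and the fiber sequences and cartesian squares of \thmref{thm:Main-1} pass through the homotopy colimit. The problem is with (1) and (3). For (1), the reduction of a general vector bundle to a trivial one via Nisnevich descent fails: a vector bundle on $\sX=[X/G]$ is a $G$-equivariant vector bundle on $X$, and these are not locally trivial in the equivariant Nisnevich topology. By \propref{prop:Nisne-split}, an equivariant Nisnevich cover preserves residue fields and stabilizers, so over a point with nontrivial stabilizer the pulled-back bundle restricts to the same representation of that stabilizer; already on $[\Spec(k)/G]$ a nontrivial representation of $G$ never trivializes. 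The paper instead proves $\A^1$-invariance $KH(\sX)\simeq KH(\sX[T])$ exactly as you do, and then treats an arbitrary bundle $f:\sE\to\sX$ by the canonical fiberwise scaling homotopy $H:\A^1_k\times E\to E$ (given locally by $X_j\mapsto TX_j$, hence independent of trivializations and automatically $G$-equivariant), concluding $h_0^*=i_0^*\circ H^*=i_0^*\circ p^*=\id$. Your argument needs to be replaced by this contraction.

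For (3), the step ``combining this with the homotopy invariance from (1) forces the equivariant $NK$-groups to be killed \dots which forces the relative $KH$-spectrum of the nil-ideal to be contractible'' is a non sequitur. Homotopy invariance of $KH$ (vanishing of $NKH$) is essentially built into the definition and gives no information about nilpotent extensions; Weibel's nilinvariance theorem is an independent and substantially harder statement, whose proof needs control of the relative $K$-theory of a nilpotent ideal (vanishing of the relative $K_{\le 0}$ together with the Witt-vector module structure on the relative $NK$-groups), and you do not supply any equivariant version of these inputs. The paper avoids re-proving Weibel's theorem: after reducing to $X=\Spec(A)$ by Nisnevich descent (possible since $G$ is finite, so $X$ has a $G$-invariant affine cover), it identifies $\sP^G(A)$ with $\sP(A^{\rm tw}[G])$ via Levine--Serp\'e --- this is precisely where the hypothesis that $G$ is finite and nice is used --- checks that $A^{\rm tw}[G]\to (A/I)^{\rm tw}[G]$ is surjective with nilpotent kernel and that $(A[\Delta_{\bullet}])^{\rm tw}[G]\cong (A^{\rm tw}[G])[\Delta_{\bullet}]$, and then quotes Weibel's nilinvariance for the ring $A^{\rm tw}[G]$. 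Without either this reduction to twisted group rings or a genuine equivariant re-proof of Weibel's theorem, your part (3) is incomplete.
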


The following result shows that $K$-theory with
coefficients for quotient stacks is homotopy invariant, i.e., it
satisfies the Thom isomorphism.
No case of this result was yet known for stacks which are not schemes.

\begin{thm}\label{thm:Mian-3}
Let $\sX$ be a nice quotient stack over $k$ with resolution property
and let $f: \sE \to \sX$ be a vector bundle. Then the following hold.
\begin{enumerate}
\item
For any integer $n$ invertible in $k$, the map
$f^*: \K(\sX ; {\Z}/n) \to \K(\sE ; {\Z}/n)$ is a homotopy equivalence.
\item
For any integer $n$ nilpotent in $k$, the map
$f^*: \K(\sX ; {\Z}[{1}/n]) \to \K(\sE ; {\Z}[1/n])$ is a homotopy equivalence.
\end{enumerate}
\end{thm}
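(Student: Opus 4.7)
The plan is to deduce both parts of Theorem~\ref{thm:Mian-3} from Theorem~\ref{thm:Main-2}(1), the vector-bundle invariance of $KH$. For every nice quotient stack $\sY$ with the resolution property there is a natural comparison map $c_\sY \colon \K(\sY) \to KH(\sY)$. Once $c_\sX$ and $c_\sE$ are shown to become weak equivalences after applying the coefficient functor $(-;\Lambda)$, with $\Lambda = \Z/n$ in part~(1) and $\Lambda = \Z[1/n]$ in part~(2), naturality combined with the equivalence $f^* \colon KH(\sX;\Lambda) \xrightarrow{\simeq} KH(\sE;\Lambda)$ forces the required equivalence $f^* \colon \K(\sX;\Lambda) \xrightarrow{\simeq} \K(\sE;\Lambda)$. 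So the whole problem reduces to proving that $c_\sY$ becomes a weak equivalence with the specified coefficients on every nice quotient stack with the resolution property.

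To analyze $c_\sY$, I would first derive a Bass fundamental theorem for such stacks from Theorem~\ref{thm:Main-1}. The localization sequence (part~(1)) applied to $\sY \times \{0\} \inj \sY \times \A^1$, together with Nisnevich descent (part~(3)) for the standard cover of $\sY \times \P^1$ by two copies of $\sY \times \A^1$, yields the Bass exact sequence; iterating these moves exhibits $KH(\sY)$ as $\hocolim \K(\sY \times \Delta^\bullet)$ in the usual simplicial sense. Consequently the homotopy fiber $F(\sY) := \hofib(c_\sY)$ has homotopy groups built from iterated nil-$K$ groups $N^j K_*(\sY) := \ker(K_*(\sY \times \A^j) \to K_*(\sY))$, and the remaining task becomes the vanishing $F(\sY;\Lambda) \simeq 0$.

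For this vanishing I would combine Nisnevich descent (Theorem~\ref{thm:Main-1}(3)) with the classical scheme-level inputs. Descent reduces the question to local models of the form $[\Spec R/G]$ with $R$ a strictly henselian local ring and $G$ a nice group scheme; the case $G = 1$ is a scheme and is handled by: (i) for $n$ invertible in $k$, the vanishing $NK(-;\Z/n) \simeq 0$ on Noetherian $k$-schemes of finite Krull dimension (Weibel, via agreement with Thomason's \'etale $K$-theory, or via cdh-descent combined with homotopy invariance on regular schemes); (ii) for $n$ nilpotent in $k$, Weibel's theorem that $NK_*$ is $p$-primary torsion in characteristic $p$, giving $NK(-;\Z[1/n]) \simeq 0$. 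The equivariant case is then obtained by propagating the vanishing from the trivial group to a general nice $G$, using the resolution property to build a $G$-equivariant devissage from coherent sheaves on $\Spec R$ to complexes of globally defined $G$-equivariant vector bundles.

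The main obstacle is precisely this final equivariant-to-nonequivariant step. The obvious Totaro/Edidin--Graham approximation, replacing $[\Spec R/G]$ by the scheme $(\Spec R \times U)/G$ for $U \subset V$ an open with free $G$-action and $\codim(V \setminus U) \gg 0$, is circular because it rests on exactly the homotopy invariance of $\K$ we are trying to prove. A workable substitute is to reduce the vanishing of $N^j K([\Spec R/G];\Lambda)$ to the corresponding vanishing on $\Spec R$ with a twist by an element of the representation ring of $G$, via the projective-bundle formula (itself a consequence of Theorem~\ref{thm:Main-1}(3)) and an induction on the length of an equivariant resolution. Making this devissage precise while keeping the bounds uniform as $j \to \infty$ is the technical heart of the argument, and it is precisely where the full strength of Theorem~\ref{thm:Main-1} is consumed.
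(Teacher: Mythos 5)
There is a genuine gap, and it sits exactly where you say it does: the vanishing of $N^jK([\Spec R/G];\Lambda)$ for a general nice group scheme $G$. Everything before that point is a reduction of the theorem to this statement (via the comparison map $\K \to KH$ and descent), but the statement itself is essentially equivalent to the trivial-bundle case of the theorem you are trying to prove, so the reduction has no content until this step is supplied. The proposed substitute --- a devissage from $[\Spec R/G]$ to $\Spec R$ using the projective bundle formula and ``twists by elements of the representation ring'' --- does not work for general nice $G$: already $K_0([\Spec k/G]) = R(G)$ is not obtained from $K_0(k)$ by any finite chain of projective-bundle decompositions when $G$ is, say, a nonabelian reductive group in characteristic $0$ or a nonabelian finite group of order prime to $p$, so there is no reason the nil-$K$-groups of $[\Spec R/G]$ should be controlled by those of $\Spec R$ in the way you need. (For diagonalizable $G$ one has $K_*([\Spec R/G]) \simeq K_*(R)\otimes R(G)$ and your argument could be salvaged, but ``nice'' is much broader.) Your own framing concedes that this is ``the technical heart,'' which is to say the proof is missing precisely its heart. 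A secondary issue: you also invoke the equivalence $\K(-;\Lambda)\simeq KH(-;\Lambda)$, which in the paper is Corollary~\ref{cor:KG-KH}, i.e.\ a \emph{consequence} of Theorem~\ref{thm:Fin-coeff}, not an available input; the paper explicitly remarks at the start of Section~\ref{sec:Finite} that Weibel's spectral-sequence route to this comparison is not known for stacks.

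The paper takes an entirely different and non-circular route. It equips $\mathrm{Perf}(\sX)$ with its natural dg enhancement, identifies its dg $K$-theory with $\K(\sX)$ via Keller's theorem, verifies (using Proposition~\ref{prop:Com-perf} and compact generation of $D_{qc}(\sX)$) the hypotheses of Tabuada's $\A^1$-homotopy-invariance theorem for $K$-theory with coefficients of dg categories, and thereby gets the case $f\colon \sX[T]\to\sX$ in one stroke, with no reduction to schemes and no equivariant devissage. The general vector bundle case then follows from the trivial case by the fiberwise contraction argument of Theorem~\ref{thm:HK-Inv} (diagram~\eqref{eqn:HK-Inv-0}). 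If you want to rescue your approach, you would need an independent proof of the equivariant nil-vanishing with coefficients; as written, the argument reduces the theorem to an unproved statement of comparable difficulty.
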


In the above results, a nice quotient stack means a stack of the form $[X/G]$, 
where $G$ is an affine group scheme over $k$ acting on a $k$-scheme $X$
such that $G$ is nice, i.e., it is either linearly reductive over $k$ or ${\rm char}(k) = 0$.
Group schemes of multiplicative type (e.g., diagonalizable group 
schemes) are notable examples of this in positive characteristic.
We refer to ~\ref{sec:LRGS} for more details.

{\bf Applications:} 
Similar to the case of schemes, one expects the above results to be of
central importance in the study of the $K$-theory of quotient stacks.
Already by now, there have been two immediate major
applications: (1) the $cdh$-descent and,
(2) Weibel's conjecture for negative $KH$-theory of stacks. 
In a sense, these applications motivated the results of this paper.

Hoyois \cite{Hoyois-2} has constructed a variant of $KH$-theory for nice 
quotient
stacks and has used the main results of this paper to prove the $cdh$-descent for
this variant. The results of this paper 
(and their generalizations) have also been
recently used by Hoyois and the first author \cite{HK} to prove
$cdh$-descent for the $KH$-theory 
(as defined in \S~\ref{sec:Finite}) of nice stacks, 
and to prove Weibel's conjecture for
the vanishing of negative $KH$-theory of such stacks.  

Another application of the above results is related to a rigidity type
theorem for the $K$-theory of semi-local rings. Let $A$ be a normal
semi-local ring with isolated singularity with an action of a finite group $G$
and let $\widehat{A}$ denote its completion along the Jacobson radical.
Rigidity question asks if the map $K'_*(G,A) \to K'_*(G, \widehat{A})$ is 
injective. If $G$ is trivial, this was proven for $K'_0(G,A)$ by Kamoi and 
Kurano \cite{KK} for certain type of isolated singularities. 
They apply this result to characterize certain semi-local rings.
The main tool of \cite{KK} is \thmref{thm:Main-1} 
for the ordinary $K$-theory of singular rings. 
We hope that the localization theorem for the $K$-theory of
quotient stacks can now be used to prove the equivariant version of this
rigidity theorem.

\section{Perfect complexes on quotient stacks}\label{sec:PCS}
Throughout this text, we work over a fixed base field $k$ of arbitrary 
characteristic. In this section, we fix notations, recall basic definitions
and prove some preliminary results. We conclude the section with the proof of 
an excision property for the derived category of perfect complexes on stacks. 

\subsection{Notations and definitions}
We let $\Sch_k$ denote the category of separated schemes of finite type
over $k$. A {\sl scheme} in this paper will mean an object
of $\Sch_k$. 
A group scheme $G$ will mean an affine group scheme over $k$.
Recall that a stack $\sX$ (of finite type) over the big fppf site of $k$
is said to be an algebraic stack over $k$
if the diagonal of $\sX$ is representable by algebraic spaces and $\sX$ admits  
a smooth, representable and surjective morphism $U \to \sX$ 
from a scheme $U$. 
Throughout this text a `stack' will always refer to an algebraic stack.
We shall say that $\sX$ is a {\sl quotient stack} if it is
a stack of the form $[X/G]$ (see, for instance, \cite[\S~2.4.2]{LMB}), 
where $G$ is an affine group scheme acting on a scheme $X$.

\subsection{Nice stacks}\label{sec:LRGS}
Given a group scheme $G$, let $\Mod^G(k)$ denote the
category of $k$-modules with $G$-action. Recall that $G$ is said to be
linearly reductive if the ``functor of $G$-invariants'' 
$(-)^G: \Mod^G(k) \to \Mod(k)$, given by the submodule of $G$-invariant 
elements, is exact. If ${\rm char}(k) = 0$, it is well known that
$G$ is linearly reductive if and only if it is reductive. 
In general, it follows from \cite[Propositions~2.5, 2.7, Theorem~2.16]{AOV}
that $G$ is linearly reductive if there is an extension
\begin{equation}\label{eqn:Lin-red}
1 \to G_1 \to G \to G_2 \to 1,
\end{equation}
where each of $G_1$ and $G_2$ is either finite over $k$ of degree prime
to the exponential characteristic of $k$ or, is of multiplicative type
({\'e}tale locally diagonalizable) over $k$.
One knows that linearly reductive group schemes in positive characteristic
are closed under the operations of taking closed subgroups and base change.

\begin{defn}\label{defn:Nice}
We shall say that a group scheme $G$ is {\sl nice}, if
either it is linearly reductive or ${\rm char}(k) = 0$.
If $G$ is nice and it acts on a scheme $X$, we shall say that the
resulting quotient stack $[X/G]$ is nice.
\end{defn}

\subsection{Perfect complexes on stacks}\label{sec:sheaves}
Given a stack $\sX$, let $Sh(\sX)$ (resp. $Mod(\sX)$, resp. $QC(\sX)$) 
denote the abelian category of sheaves of abelian groups 
(resp. sheaves of $\sO_{\sX}$-modules, resp. quasi-coherent sheaves) on 
the smooth-{\'e}tale site Lis-Et($\sX$) of $\sX$.
Let $Ch_{qc}(\sX)$ (resp. $Ch(QC(\sX))$) denote the 
category of all (possibly unbounded) chain complexes over $Mod(\sX)$ 
whose cohomology lie in $QC(\sX)$ (resp. the category of all chain complexes 
over $QC(\sX)$). 
Let $D_{qc}(\sX)$ (resp. $D(QC(\sX))$) denote the corresponding 
derived category. Let $D(\sX)$ denote the unbounded derived category of
$Mod(\sX)$.
If $\sZ \inj \sX$ is a closed substack with open complement $j: \sU \inj \sX$, 
we let $Ch_{qc,\sZ}(\sX) = \{ \sF\in Ch_{qc}(\sX) | j^*(\sF) \stackrel{q. \ iso.}
{\To} 0\}$. 
The derived category of $Ch_{qc,\sZ}(\sX)$ will be denoted by $D_{qc,\sZ}(\sX)$.
Recall that a stack $\sX$ is said to have the resolution property if
every coherent sheaf on $\sX$ is a quotient of a vector bundle.
 
\begin{lem}\label{lem:Thom-Res}
Let $\sX$ be the stack quotient of a scheme $X$ with an action of a group
scheme $G$. Then the following hold.
\begin{enumerate}
\item
Every quasi-coherent sheaf on $\sX$ is the direct limit of
its coherent subsheaves.
\item
$\sX$ has the resolution property if $X$ has an ample family of
$G$-equivariant line bundles.
In particular, $\sX$ has the resolution property if $X$ is 
normal with an ample family of (non-equivariant)
line bundles.
\item 
$\sX$ has the resolution property if $X$ is quasi-affine.
\end{enumerate}
\end{lem}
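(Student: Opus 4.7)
The plan is to work throughout under the standard equivalence between quasi-coherent (resp.\ coherent) sheaves on $\sX = [X/G]$ and $G$-equivariant quasi-coherent (resp.\ coherent) sheaves on $X$, so that all three assertions become equivariant statements on the scheme $X$.

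For part (1), I begin with a $G$-equivariant quasi-coherent sheaf $\sF$ on $X$. Since $X$ is Noetherian, $\sF$ is the filtered colimit of its coherent (non-equivariant) $\sO_X$-subsheaves $\sF_\alpha$, and it suffices to show that each $\sF_\alpha$ is contained in a coherent $G$-equivariant subsheaf $\sF'_\alpha \subseteq \sF$. I would treat this first in the affine case $X = \Spec(R)$, $G = \Spec(A)$: there $\sF$ becomes an $(R,A)$-Hopf module $M$, and the classical fact that every finite subset of an $A$-comodule lies in a finite-dimensional $A$-subcomodule combines with the observation that the $R$-submodule of $M$ generated by such a subcomodule remains an $A$-subcomodule and is finitely generated. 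Globalization to an arbitrary $X$ --- which I regard as the main technical obstacle in this lemma --- proceeds either by a $G$-stable affine cover, or, more intrinsically, by encoding the equivariant structure through the isomorphism $\theta : a^*\sF \isoto p^*\sF$ (with $a, p : G \times X \To X$ the action and second projection) and using faithfully flat quasi-compact descent along $p$ to transfer the coherent equivariant subsheaf back to $X$.

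For part (2), the first assertion follows from the standard ample-family argument adapted equivariantly: for each coherent equivariant $\sF$ on $\sX$ and each $i$, once $n \gg 0$ the twist $\sF \otimes L_i^{\otimes n}$ is globally generated; a finite set of generators spans a finite-dimensional $G$-subrepresentation $V$ of the rational $G$-module $H^0(X, \sF \otimes L_i^{\otimes n})$ (every comodule over an affine group scheme is the union of its finite-dimensional subrepresentations), and then $V \otimes_k L_i^{\otimes -n} \surj \sF$ is a $G$-equivariant surjection from an equivariant vector bundle. Iterating yields a surjection from a finite direct sum $\bigoplus_j L_{i_j}^{\otimes -n_j}$. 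For the second assertion, with $X$ only normal and a non-equivariant ample family given, I would invoke Thomason's linearization theorem, which guarantees that on a normal scheme with an action of an affine group scheme every line bundle admits a $G$-linearization after replacement by a suitable tensor power; applying this termwise converts the given family into a $G$-equivariant ample family and reduces to the first part.

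For part (3), every coherent sheaf on a Noetherian quasi-affine scheme $X$ is generated by its global sections, because $X$ embeds as a quasi-compact open subscheme of the affine scheme $\Spec \Gamma(X, \sO_X)$ (with the $G$-action on $\sO_X$ making this embedding $G$-equivariant). Given a coherent $G$-equivariant sheaf $\sF$, the global sections $H^0(X, \sF)$ form a rational $G$-representation; a finite generating set spans a finite-dimensional $G$-subrepresentation $V \subseteq H^0(X, \sF)$, and the evaluation $V \otimes_k \sO_X \surj \sF$ is a $G$-equivariant surjection from the equivariant vector bundle $V \otimes_k \sO_X$, proving the resolution property. Equivalently, this exhibits $\sO_X$ as a singleton $G$-equivariant ample family, so the conclusion also follows from part (2).
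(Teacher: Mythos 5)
Your proposal is correct and follows essentially the same route as the paper: the paper's proof consists of citations to Thomason's equivariant resolution paper (local finiteness of equivariant quasi-coherent sheaves, equivariant ample families, and the linearization theorem for normal schemes) and to Hall--Rydh for the quasi-affine case, and your sketch reconstructs precisely the arguments behind those citations. The only imprecision worth noting is in part (2), where for an ample \emph{family} one does not get global generation of $\sF \otimes L_i^{\otimes n}$ from a single member $L_i$; rather one covers $X$ by affines of the form $X_s$ for sections $s$ of various $L_i^{\otimes n}$ and extends local generators, which is consistent with the finite direct sum $\bigoplus_j L_{i_j}^{\otimes -n_j}$ you arrive at.
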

\begin{proof}
Part (1) is \cite[Lemma~1.4]{Thom1}.
For (2), note that $[\Spec(k)/G]$ has the resolution property
\cite[Lemma~2.4]{Thom1}. Therefore,
if $X$ has an ample family of
$G$-equivariant line bundles, it follows from \cite[Lemma~2.6]{Thom1}
that $\sX$ has the resolution property.
If $X$ is normal with an ample family of (non-equivariant) line bundles,
it follows from \cite[Lemmas~2.10, 2.14]{Thom1} that 
$\sX$ has the resolution property.
(3) is well known, and for example, follows from \cite[Lemma~7.1]{HR}.
\end{proof}

Recall from \cite[Definition~I.4.2]{SGA6} that a complex of
$\sO_X$-modules on a Noetherian scheme is perfect if it is Zariski locally 
quasi-isomorphic to a bounded complex of locally free sheaves.

\begin{defn} \label{defn:Perfect-complex}
Let $\sX$ be a stack over $k$.
A chain complex $P \in Ch_{qc}(\sX)$ is called {\it perfect} if for any 
affine scheme $U = \Spec(A)$ with a smooth morphism $s: U \to \sX$, 
the complex of $A$-modules $s^*(P) \in Ch(Mod(A))$ is quasi-isomorphic 
to a bounded complex of finitely-generated projective $A$-modules.
\end{defn}

We shall denote the category of perfect complexes on $\sX$ by
$\rm Perf(\sX)$ and its derived category by $D^{\rm perf}(\sX)$.
For a quotient stack with resolution property, we can characterize
perfect complexes in terms of their pull-backs to the total space
of the quotient map.

\begin{lem}\label{lem:Desc-perf}
Let $f:X' \to X$ be a faithfully flat map of Noetherian schemes.
Let $P$ be a chain-complex of
quasi-coherent sheaves on $X$ such that $f^*(P)$ is perfect on $X'$.
Then $P$ is a perfect complex on $X$.
\end{lem}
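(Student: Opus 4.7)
The plan is to reduce the statement to a purely algebraic claim about rings and modules and then combine the standard characterization of perfect complexes on Noetherian rings with faithfully flat descent. Since perfectness is Zariski-local on $X$, I would first shrink $X$ to an affine open $\Spec A$. Covering the preimage by finitely many affine opens of $X'$, their disjoint union has the form $\Spec A'$ with $A \to A'$ a faithfully flat ring map, and perfectness of $f^*P$ on $X'$ implies perfectness of $P' := P \otimes_A A' = P \otimes_A^L A'$ on $\Spec A'$ (flatness identifying the ordinary and derived tensor product). So I may assume the situation is purely algebraic: $P$ is a complex of $A$-modules whose base change $P'$ along a faithfully flat $A \to A'$ is perfect over $A'$.

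I would then invoke the standard criterion (SGA6, Exp.~I, Cor.~5.8.1) that a complex of modules over a Noetherian ring is perfect if and only if its cohomology is bounded, finitely generated in each degree, and the complex has finite Tor-amplitude, and verify these three properties for $P$. Bounded and finitely generated cohomology is easy: since $A \to A'$ is flat, $H^i(P) \otimes_A A' \cong H^i(P')$; the right-hand side vanishes outside a bounded range and is finitely generated over $A'$. Faithful flatness of $A \to A'$ then forces $H^i(P) = 0$ outside the same range, and the standard descent of finite generation along a faithfully flat ring map (Bourbaki) yields that each $H^i(P)$ is a finitely generated $A$-module.

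The main remaining input---and the only real work---is finite Tor-amplitude of $P$ over $A$. Given any $A$-module $M$, flatness of $A'$ over $A$ together with associativity of the derived tensor product gives a natural isomorphism
\[
(P \otimes_A^L M) \otimes_A A' \;\simeq\; P' \otimes_{A'}^L (M \otimes_A A'),
\]
whose cohomology is concentrated in a bounded interval independent of $M$, since $P'$ has finite Tor-amplitude over $A'$. Faithful flatness of $A \to A'$ then implies that $P \otimes_A^L M$ has cohomology in the same interval, giving $P$ a uniform Tor-amplitude bound over $A$. Combined with the previous paragraph, this verifies the three conditions of the criterion and concludes the proof. The principal potential pitfall is the base-change identification of derived tensor products, but it is immediate once one uses that $A'$ is flat over $A$; the rest is standard faithfully flat descent.
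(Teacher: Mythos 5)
Your argument is correct and follows essentially the same route as the paper: both reduce perfectness to the Thomason--Trobaugh/SGA6 criterion (bounded, coherent cohomology together with finite Tor-amplitude) and then descend each of these three properties along the faithfully flat map. The paper simply asserts the descent step, whereas you spell out the affine reduction and the base-change identifications; the details you supply are accurate.
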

\begin{proof}
By \cite[Proposition~2.2.12]{TT}, a complex of quasi-coherent sheaves
is perfect if and only if it is cohomologically bounded above, its cohomology
sheaves are coherent, and it has locally finite Tor-amplitude. But all these
properties are known to descend from a faithfully flat cover.
\end{proof}

\begin{prop} \label{prop:perfect-complex-[X/G]}
Let $\sX$ be the stack quotient of a scheme $X$ with an action of a group
scheme $G$ and let $u: X \to \sX$ be the quotient map. 
Assume that $\sX$ has the resolution property.
Let $P$ be a chain complex of quasi-coherent $\sO_{\sX}$-modules.
Then the following are equivalent.
\begin{enumerate}
\item
$P$ is perfect.
\item
$u^*(P)$ is perfect.
\item
$u^*(P)$ is quasi-isomorphic to a bounded complex of 
$G$-equivariant vector bundles in $Ch(QC^G(X))$, where $QC^G(X)$ denotes the
category of $G$-equivariant quasi-coherent sheaves on $X$.
\end{enumerate}
\end{prop}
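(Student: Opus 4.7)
The plan is to prove $(1)\Leftrightarrow(2)$ by faithfully flat descent of perfectness, to note $(3)\Rightarrow(2)$ as elementary, and to establish $(1)\Rightarrow(3)$ by a Thomason--Trobaugh style globalization argument that crucially uses the resolution property of $\sX$. This closes the cycle $(1)\Rightarrow(3)\Rightarrow(2)\Rightarrow(1)$, with the direct equivalence $(1)\Leftrightarrow(2)$ as an auxiliary.

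For the equivalence of $(1)$ and $(2)$, I would use that $u: X \to \sX$ is smooth, representable and surjective. Going from $(1)$ to $(2)$, I would cover $X$ by affine opens $U_i$; each composition $U_i \inj X \xrightarrow{u} \sX$ is a smooth morphism from an affine scheme, so $u^*(P)|_{U_i}$ is perfect by \defref{defn:Perfect-complex}, and local perfectness on an affine cover yields perfectness on $X$. Going from $(2)$ to $(1)$, for any smooth $s: U \to \sX$ with $U$ affine, I would form the Cartesian square with $u$: the projection $U\times_\sX X \to U$ is a $G$-torsor, hence faithfully flat. The pullback of $s^*(P)$ to $U\times_\sX X$ agrees with the pullback of the perfect complex $u^*(P)$ along $U\times_\sX X \to X$ and is therefore perfect, so \lemref{lem:Desc-perf} forces $s^*(P)$ to be perfect on $U$. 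The implication $(3)\Rightarrow(2)$ is immediate: forgetting the $G$-structure, a bounded complex of $G$-equivariant vector bundles is a bounded complex of locally free $\sO_X$-modules, hence perfect.

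For $(1)\Rightarrow(3)$, the substantive step, I would invoke the characterization of perfectness recalled in the proof of \lemref{lem:Desc-perf}: since $P$ is perfect on $\sX$, it is cohomologically bounded above, has coherent cohomology, and has (locally) finite Tor-amplitude. Because vector bundles on $\sX$ are exactly $G$-equivariant vector bundles on $X$ and coherent sheaves on $\sX$ are exactly $G$-equivariant coherent sheaves on $X$, the resolution property asserts that every equivariant coherent sheaf admits an equivariant surjection from an equivariant vector bundle. Combined with \lemref{lem:Thom-Res}(1) to pass between quasi-coherent and coherent sheaves, the standard inductive construction --- adding a vector bundle in each lower degree to kill the top unresolved cohomology --- produces a quasi-isomorphism $E^\bullet \to P$ in $Ch(QC(\sX))$ with $E^\bullet$ bounded above and every $E^i$ a vector bundle on $\sX$. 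The finite Tor-amplitude, say in $[-N, m]$, of $P$ then implies that the leftmost kernel $\ker(E^{-N+1}\to E^{-N+2})$ in the brutal truncation $\tau_{\geq -N}E^\bullet$ is flat and finitely generated, hence a vector bundle on $\sX$. Applying $u^*$, which preserves vector bundles and quasi-isomorphisms, yields the desired bounded complex of $G$-equivariant vector bundles quasi-isomorphic to $u^*(P)$ in $Ch(QC^G(X))$.

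The main obstacle is precisely the globalization in $(1)\Rightarrow(3)$: a perfect complex on a stack comes only with \emph{local} vector-bundle resolutions a priori, and constructing a single global bounded-above resolution is exactly the role of the resolution property of $\sX$ (equivalently, the $G$-equivariant resolution property on $X$). A subsidiary care point is verifying that the kernel produced by the brutal truncation is a vector bundle and not merely a flat coherent sheaf; this uses Noetherianity together with the flatness supplied by the Tor-amplitude bound.
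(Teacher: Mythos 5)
Your overall strategy is essentially the paper's: the substantive implication is carried by the Thomason--Trobaugh inductive construction, fed by the resolution property (to surject vector bundles onto coherent sheaves) and \lemref{lem:Thom-Res}(1) (to replace a quasi-coherent surjection onto a coherent cohomology sheaf by a coherent one), followed by a truncation argument using finite Tor-amplitude as in \cite[Proposition~2.2.12]{TT}. The paper runs this in $QC^G(X)$ as its step $(2)\Rightarrow(3)$, while you run it in $QC(\sX)$ as $(1)\Rightarrow(3)$; under the equivalence $QC(\sX)\simeq QC^G(X)$ these are the same argument. Your cycle $(1)\Rightarrow(3)\Rightarrow(2)\Rightarrow(1)$, with $(2)\Rightarrow(1)$ obtained by pulling back along the $G$-torsor $U\times_\sX X\to U$ and applying \lemref{lem:Desc-perf}, is sound, and it happens to make the paper's hardest descent step unnecessary.

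That step is $(1)\Rightarrow(2)$, and your direct argument for it contains a genuine error: the quotient map $u\colon X\to\sX=[X/G]$ is a $G$-torsor, hence representable, surjective and faithfully flat, but it is smooth only when $G$ is smooth over $k$. The paper allows arbitrary affine group schemes, and its ``nice'' stacks explicitly include non-smooth ones (e.g.\ quotients by $\mu_p$ in characteristic $p$), so the composites $U_i\hookrightarrow X\xrightarrow{u}\sX$ from affine opens of $X$ need not be smooth and \defref{defn:Perfect-complex} cannot be applied to them. This is exactly why the paper's proof of $(1)\Rightarrow(2)$ takes a genuine smooth atlas $s\colon U\to\sX$, base-changes it to each $\Spec(A_i)$, passes to \'etale scheme covers $V_i$ of the resulting algebraic spaces, and then descends perfectness along the faithfully flat maps $V_i\to\Spec(A_i)$ via \lemref{lem:Desc-perf}. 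Since in your organization $(1)\Rightarrow(2)$ is only an auxiliary remark and follows from $(1)\Rightarrow(3)\Rightarrow(2)$, the proposal as a whole survives, but that claim must be deleted or replaced by the atlas argument. (A minor further point: at the Tor-amplitude bound you want the good truncation $\tau^{\geq -N}E$, whose degree $-N$ term is the cokernel $E^{-N}/B^{-N}E$ rather than a kernel of a brutal truncation; flatness plus coherence of this cokernel, checked on a smooth presentation, makes it a vector bundle on $\sX$.)
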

\begin{proof}
(1) $\implies$ (2). We let $Q = u^*(P)$.
Consider an open cover of $X$ by affine open subsets 
$\{ \Spec(A_i) \}$. Let $s: U \to [X/G]$ be an atlas 
and let $s_i: U_i \to \Spec (A_i)$ be its base change to $\Spec (A_i)$,
where $U_i$ are algebraic spaces. Take \'etale covers $t_i: V_i \to U_i$ of 
$U_i$, where $V_i$'s are schemes. Let $f_i: V_i \to U$ and 
$g_i: V_i \to \Spec (A_i)$ denote the 
obvious composite maps. It follows from (1) that 
${\bf L}g_i^*(Q|_{\Spec(A_i)}) \simeq {\bf L}f_i^* (s^*(P))$ 
is a perfect complex on $V_i$. Therefore by \lemref{lem:Desc-perf}, 
$Q|_{\Spec(A_i)}$ is a perfect complex in $Ch(Mod(A_i))$. Equivalently,
$Q$ is perfect.

(2) $\implies$ (3). 
We want to apply the Inductive Construction Lemma 1.9.5 of \cite{TT} with
$\sA = QC^G(X)$, $\sD =$ the category of $G$-equivariant vector
bundles on $X$ and 
$\sC =$ the category of complexes in $Ch(QC^G(X))$ satisfying (2).
It is enough to verify that hypothesis 1.9.5.1 of {\it loc. cit.} holds. 

Suppose $C \in \sC$ such that $H^i(C) = 0$ for $i \geq n$, 
and $q: \sF \surj H^{n-1}(C)$ in $QC^G(X)$. 
By \cite[Proposition 2.2.3]{TT}, 
$\sG = H^{n-1}(C)$ is a coherent $\sO_X$-module. 
Therefore,  $\sG$ is a coherent $G$-module. 
By \lemref{lem:Thom-Res} (1), we can write $\sF = \varinjlim \sF_{\alpha}$,
where $\sF_{\alpha}$ are coherent $G$-submodules of $\sF$.
Under the forgetful functor, this gives an epimorphism
$q: \varinjlim \sF_{\alpha} \surj \sG$ in $QC(X)$,
where $\sF_{\alpha}, \ \sG$ are coherent modules.

Now, as $\sG$ is coherent and $X$ is Noetherian,
we can find an $\alpha$ such that 
the composite map $\sF_{\alpha} \inj \sF \xrightarrow{q} \sG$ is surjective. 
By the resolution property, there exists $\sE \in \sD$ such that
$\sE \surj \sF_{\alpha}$. 
Hence the composite $\sE \to \sF_{\alpha} \inj \sF \xrightarrow{q} \sG$ 
is also surjective. 
Applying the conclusion of \cite[Lemma 1.9.5]{TT} to $C^{\bullet} = P$ and 
$D^{\bullet} = 0$, we get a bounded above complex $E$ of $G$-vector bundles 
and a quasi-isomorphism $\phi: E \xrightarrow{\sim} P$ in $Ch(QC^G(X))$.  
Therefore, $E \in \sC$. 

Since $X$ is Noetherian, $E$ has globally finite Tor-amplitude.
To show that $Q$ is quasi-isomorphic to a bounded complex over $\sD$,
it suffices to prove that the good truncation $\tau^{\geq a}(E)$ is a bounded
complex of $G$-equivariant vector bundles
and the map $E \to \tau^{\geq a}(E)$ is a quasi-isomorphism.
It is enough to prove this claim by forgetting the $G$-action.
But this follows exactly
along the lines of the proof of \cite[Proposition 2.2.12]{TT}.
(3) $\implies$ (1) is clear.
\end{proof}

\subsection{Perfect complexes and compact objects of $D_{qc}(\sX)$}
\label{sec:compact}
Recall that if $\sT$ is a triangulated category  which is closed under
small coproducts, then an object $E \in {\rm Obj}(\sT)$ is called compact
if the functor $\Hom_{\sT}(E, -)$ on $\sT$ commutes with small
coproducts. The full triangulated subcategory of compact objects in $\sT$
is denoted by $\sT^{c}$.
If $X$ is a scheme, one of the main results of \cite{TT}
is that a chain complex $P \in Ch_{qc}(X)$ is perfect if and only if it
is a compact object of $D_{qc}(X)$. For quotient stacks, this is
a consequence of the results of Neeman \cite{Neeman-1} 
and Hall-Rydh \cite{HR1}:

\begin{prop}\label{prop:Com-perf}
Let $\sX$ be a nice quotient stack.
Then a chain complex $P \in  Ch_{qc}(\sX)$ is perfect if and only if it is
a compact object of $D_{qc}(\sX)$. 
\end{prop}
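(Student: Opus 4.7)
The plan is to combine two external results: the Hall--Rydh compact generation theorem \cite{HR1} for $D_{qc}$ of nice quotient stacks and Neeman's description \cite{Neeman-1} of compact objects in a compactly generated triangulated category as the thick closure of any set of compact generators.

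For ``perfect $\Rightarrow$ compact'', the input from \cite{HR1} I would use is that a nice quotient stack $\sX = [X/G]$ is concentrated: the derived pushforward $Ru_*: D_{qc}(X) \to D_{qc}(\sX)$ along the atlas $u: X \to \sX$ has uniformly bounded cohomological amplitude and preserves small coproducts. Niceness is exactly the hypothesis that secures this, via exactness of $G$-invariants. Once one knows $\sX$ is concentrated, the B\"okstedt--Neeman argument shows directly that any complex which pulls back under $u$ to a strictly perfect complex on $X$ is compact in $D_{qc}(\sX)$; by \propref{prop:perfect-complex-[X/G]} this covers every perfect complex on $\sX$.

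For the converse I would exhibit an explicit set $\sG \subset {\rm Perf}(\sX)$ of compact generators. Using the resolution property together with \lemref{lem:Thom-Res}, the collection of (shifts of) $G$-equivariant vector bundles on $X$ works: they are perfect by \propref{prop:perfect-complex-[X/G]}, compact by the previous paragraph, and they generate $D_{qc}(\sX)$ because every quasi-coherent sheaf is a filtered colimit of coherent subsheaves, each of which is a quotient of a vector bundle. Neeman's theorem then identifies $D_{qc}(\sX)^c$ with the thick subcategory generated by $\sG$. Since ${\rm Perf}(\sX)$ is thick in $D_{qc}(\sX)$ and contains $\sG$, this gives $D_{qc}(\sX)^c \subseteq {\rm Perf}(\sX)$ and finishes the equivalence.

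The main obstacle is really in the inputs rather than in the formal step: proving that $D_{qc}(\sX)$ is concentrated and compactly generated is delicate and genuinely requires the niceness of $G$. For a unipotent group such as the additive group $\G_a$ in positive characteristic, already $B\G_a$ fails this test, and perfect complexes there are not compact; this is why the niceness restriction is imposed in the statement. Granted the Hall--Rydh inputs, the remainder of the argument is a formal application of Neeman's theorem to the generators provided by \lemref{lem:Thom-Res} and \propref{prop:perfect-complex-[X/G]}.
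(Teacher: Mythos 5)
Your first direction (perfect $\Rightarrow$ compact) uses the same input as the paper, namely the Hall--Rydh result for nice quotient stacks \cite{HR1}, and is fine. The problem is in the converse. The proposition assumes only that $\sX$ is a nice quotient stack --- the resolution property is \emph{not} a hypothesis --- and it is later applied in that generality (e.g.\ in \lemref{lem:cpt-obj-D_(qc)(X on Z)}). Your argument for ``compact $\Rightarrow$ perfect'' hinges on exhibiting the $G$-equivariant vector bundles as a set of compact generators, which requires the resolution property from the outset; moreover, even granting that property, the assertion that these bundles generate the \emph{unbounded} category $D_{qc}(\sX)$ is not a formal consequence of ``every quasi-coherent sheaf is a filtered colimit of coherent subsheaves, each a quotient of a vector bundle'' --- one needs a genuine convergence argument (this is essentially \cite[Proposition~8.4]{HR}, which the paper only invokes later, in Proposition~\ref{prop:Verdier-quotient}, precisely where the resolution property is assumed). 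So as written your proof establishes a weaker statement than the one claimed.

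The fix is to argue locally rather than globally, which is what the paper does: given a compact $P$ and a smooth map $s: U = \Spec(A) \to \sX$, the adjunction $({\bf L}s^*, {\bf R}s_*)$ together with the fact that ${\bf R}s_*$ preserves small coproducts shows, by the elementary direction of \cite[Theorem~5.1]{Neeman-1}, that $s^*(P)$ is compact in $D_{qc}(U)$; since $U$ is affine, compact objects there are perfect, and perfection of $P$ is by definition checked on such charts. This avoids both the resolution property and any compact generation statement for $D_{qc}(\sX)$ itself. Your observation about $B\G_a$ in positive characteristic is a good sanity check for where niceness enters, but it concerns the direction you got right, not the one with the gap.
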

\begin{proof}
Suppose $P$ is compact. We need to show that
$s^*(P)$ is perfect on $U = \Spec(A)$ for every smooth map 
$s: U \to \sX$. 
Since the compact objects of $D_{qc}(U)$ are perfect, 
it suffices to show that $s^*(P)$ is compact.
We deduce this using \cite[Theorem~5.1]{Neeman-1}. 

The push-forward functor 
${\bf R}s_*: D_{qc}(U) \to D_{qc}(\sX)$ is a right adjoint to the pull-back 
${\bf L}s^*:  D_{qc}(\sX) \to D_{qc}(U)$. As ${\bf R}s_*$ and ${\bf L}s^*$
both preserve small coproducts (see the proof of 
\lemref{lem:cpt-obj-D_(qc)(X on Z)} below), 
it follows from \cite[Theorem~5.1]{Neeman-1} that 
$s^*(P)$ is compact.

If $P$ is perfect, then it is a compact object of $D_{qc}(\sX)$
by our assumption on $\sX$ and \cite[Theorem~C]{HR1}.
\end{proof}

\begin{lem} \label{lem:cpt-obj-D_(qc)(X on Z)}
Let $\sX$ be a nice quotient stack and let $\sZ \subset \sX$ be a closed 
substack. Then the compact objects of $D_{qc,\sZ}(\sX)$ 
are exactly those which are perfect in $Ch_{qc}(\sX)$.
\end{lem}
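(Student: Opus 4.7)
The plan is to exploit the recollement
$D_{qc,\sZ}(\sX) \xrightarrow{i_*} D_{qc}(\sX) \xrightarrow{j^*} D_{qc}(\sU)$,
where $j:\sU \inj \sX$ is the open complement of $\sZ$. The inclusion $i_*$
is fully faithful, and since $j^*$ preserves small coproducts, the
subcategory $D_{qc,\sZ}(\sX)$ is closed under them, so coproducts there are
computed in $D_{qc}(\sX)$. The right adjoint of $i_*$ is the local
cohomology functor $R\Gamma_{\sZ}$, realized as the fiber of the unit
$\id \to Rj_*j^*$. With these formalities in place, the lemma reduces to
\propref{prop:Com-perf} together with one further technical input.

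For the easy direction, suppose $P \in D_{qc,\sZ}(\sX)$ is perfect in
$Ch_{qc}(\sX)$. Then \propref{prop:Com-perf} makes $P$ compact in
$D_{qc}(\sX)$. Since $i_*$ is fully faithful and coproducts in
$D_{qc,\sZ}(\sX)$ agree with those in $D_{qc}(\sX)$, the functor
$\Hom_{D_{qc,\sZ}(\sX)}(P,-)$ inherits commutation with small coproducts,
so $P$ is compact in $D_{qc,\sZ}(\sX)$.

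For the converse, suppose $P$ is compact in $D_{qc,\sZ}(\sX)$. I would aim
to show $P$ is compact in $D_{qc}(\sX)$, whereupon \propref{prop:Com-perf}
identifies it as perfect. For any family $\{E_\alpha\}$ in $D_{qc}(\sX)$, the
adjunction $i_* \dashv R\Gamma_{\sZ}$ yields
$\Hom_{D_{qc}(\sX)}(P,\bigoplus_\alpha E_\alpha) \cong
\Hom_{D_{qc,\sZ}(\sX)}(P, R\Gamma_{\sZ}(\bigoplus_\alpha E_\alpha))$; hence
the desired
$\Hom_{D_{qc}(\sX)}(P,\bigoplus_\alpha E_\alpha) \cong
\bigoplus_\alpha \Hom_{D_{qc}(\sX)}(P,E_\alpha)$
follows as soon as $R\Gamma_{\sZ}$ is known to commute with small coproducts.

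The principal obstacle is exactly this last commutation. Via the triangle
$R\Gamma_{\sZ} \to \id \to Rj_*j^*$ and the fact that $j^*$ is a left
adjoint (so preserves coproducts), the problem reduces to
coproduct-preservation of $Rj_*: D_{qc}(\sU) \to D_{qc}(\sX)$ for the open
immersion $j$. On nice quotient stacks, \cite[Theorem~C]{HR1} supplies that
$D_{qc}(\sX)$ and $D_{qc}(\sU)$ are compactly generated by perfect
complexes, and restriction along an open immersion sends perfect complexes
to perfect complexes. By \cite[Theorem~5.1]{Neeman-1}, this compactness
preservation by the left adjoint $j^*$ is equivalent to
coproduct-preservation by its right adjoint $Rj_*$, closing the argument.
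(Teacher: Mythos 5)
Your overall architecture is sound and genuinely different from the paper's. You transfer compactness from $D_{qc,\sZ}(\sX)$ up to $D_{qc}(\sX)$ via the adjunction between the inclusion and the local cohomology functor $R\Gamma_{\sZ}$, and then invoke \propref{prop:Com-perf}. The paper never passes through compactness in $D_{qc}(\sX)$: instead, for each smooth map $s: V=\Spec(A) \to \sX$ it shows directly that $s^*(K)$ is compact in $D_{qc,T}(V)$ (where $T = s^{-1}(\sZ)$), using the adjunction $(s^*, {\bf R}s_*)$, the fact that ${\bf R}s_*$ preserves small coproducts and the support condition (checked by base change along an atlas), and then concludes perfectness from the affine scheme case. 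Both reductions are legitimate; yours is arguably cleaner conceptually, while the paper's stays closer to the definition of perfectness and needs only facts about the single concentrated morphism $s$.

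There is, however, a genuine problem with how you justify the one nontrivial input, namely that ${\bf R}j_*$ preserves small coproducts. The direction of \cite[Theorem~5.1]{Neeman-1} you invoke (left adjoint preserves compacts $\Rightarrow$ right adjoint preserves coproducts) requires the source category $D_{qc}(\sX)$ to be compactly generated. The lemma assumes only that $\sX$ is a nice quotient stack, with no resolution property; in the paper, compact generation of $D_{qc}(\sX)$ is established only later and only under the resolution property (\propref{prop:Verdier-quotient}, via \cite[Proposition~8.4]{HR}), while \cite[Theorem~C]{HR1} is used in \propref{prop:Com-perf} solely to conclude that perfect complexes are compact. So as written your argument rests on a compact generation statement that is not available under the stated hypotheses, and it also quietly uses that the open substack $\sU$ is again a nice quotient stack so that its compacts are perfect. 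The repair is immediate and avoids Neeman's theorem altogether: the open immersion $j$ is representable, quasi-compact and quasi-separated, hence concentrated, so ${\bf R}j_*$ preserves small coproducts by \cite[Lemma~2.5(3), Lemma~2.6(3)]{HR} --- exactly the citation the paper uses for ${\bf R}s_*$ in its own proof of this lemma. With that substitution, your triangle $R\Gamma_{\sZ} \to \id \to {\bf R}j_*j^*$ yields coproduct preservation for $R\Gamma_{\sZ}$ and the rest of your argument goes through.
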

\begin{proof}
It follows from \propref{prop:Com-perf} that $D^{\rm perf}_{\sZ}(\sX)
\subseteq D^c_{qc,\sZ}(\sX)$.
To prove the other inclusion, let $K \in D^c_{qc,\sZ}(\sX)$.
We need to show that $K$ is a perfect complex in $D_{qc}(\sX)$.
Let $s: V = \Spec(A) \to \sX$ be any smooth morphism and
set $T = s^{-1} (\sZ)$.
Consider a set of objects $\{F_{\alpha} \}$ in $D_{qc,T}(V)$.
Since $\sX$ is a quotient stack, there exists a smooth atlas
$u: X \to \sX$, where $X \in \Sch_k$.
This gives a 2-Cartesian square of stacks
\begin{equation} \label{eqn:Base-chng}
\xymatrix@C1pc{
W \ar[r]^{s'} \ar[d]_{u'} & X \ar[d]^{u} \\
V \ar[r]_{s} & \sX.}
\end{equation}

The maps $u$ and $s$ are tor-independent because they are smooth.
Since $\Delta_{\sX}$ is representable and $V$ is affine, it follows
that $s$ is representable. We conclude from 
\cite[Lemma~2.5(3), Corollary~4.13]{HR} that 
$u^*{\bf R}s_*(F_{\alpha}) \overset{\simeq} \to {\bf R}s'_* u'^*(F_{\alpha})$.
It follows that ${\bf R}s_*(F_{\alpha}) \in D_{qc, \sZ}(\sX)$. 
Using adjointness \cite[Lemma 3.3]{AK}, we get 
\[
\begin{array}{llll}
\Hom_{D_{qc,T}(V)}(s^*(K), \oplus_{\alpha} F_{\alpha}) & \simeq & 
\Hom_{D_{qc,\sZ}(\sX)} (K, {\bf R}s_* (\oplus_{\alpha}F_{\alpha})) &
 \\
& {\simeq}^1 &  
\Hom_{D_{qc,\sZ}(\sX)} (K, \oplus_{\alpha} {\bf R}s_* (F_{\alpha})) & \\
& {\simeq}^2 & 
\oplus_{\alpha} \Hom_{D_{qc,\sZ}(\sX)}(K,  {\bf R}s_*( F_{\alpha})) & \\
& \simeq & 
\oplus_{\alpha} \Hom_{D_{qc,T}(V)}(s^*(K),  F_{\alpha}), & \\
\end{array}
\] 
where ${\simeq}^1$ follows from the fact that ${\bf R}s_*$ preserves
small coproducts (see \cite[Lemma~2.5(3), Lemma~2.6(3)]{HR}), 
and $\simeq^2$ follows since $K \in D^c_{qc,Z}(\sX)$.
This shows that $s^*(K) \in {D_{qc,T}^c(V)}$.
Since $V$ is affine, this implies that $s^*(K)$ is perfect.
\end{proof}

\subsection{Excision for derived category}\label{sec:Car.Eil-resln}
We now prove an excision property for the derived category
of perfect complexes on stacks using the technique of
Cartan-Eilenberg resolutions.

Let $\sA$ be a Grothendieck category and let $D(\sA)$ denote 
the unbounded derived category of $\sA$.
Let $Ch(\sA)$ denote the category of all chain complexes over $\sA$.
An object $A \in Ch(\sA)$ is said to be {\it $K$-injective}  if for every 
acyclic complex $J \in Ch(\sA)$, the complex $\Hom^{\cdot}(J, A)$ is acyclic. 
Since $\sA$ has enough injectives, 
a complex over $\sA$ has a Cartan-Eilenberg resolution 
(see \cite[$0_{III}$(11.4.2)]{EGA3}). 

It is known that a Cartan-Eilenberg resolution of an unbounded complex over 
$\sA$ need not, in general, be a $K$-injective resolution.
However, when $\sX$ is a scheme or 
a Noetherian and separated Deligne-Mumford stack 
over a fixed Noetherian base scheme, it has been shown that
for a complex $J$ of $\sO_{\sX}$-modules with quasi-coherent cohomology,
the total complex of a Cartan-Eilenberg resolution does give a 
$K$-injective resolution of $J$ (see \cite{Keller}, \cite[Proposition 2.2]{AK}).
Our first objective is to extend these results to all algebraic 
stacks. We follow the techniques of \cite{AK} closely.
Given a double complex $J^{\bullet,\bullet}$, let $\widehat{\Tot(J)}$ 
denote the (product) total complex.

\begin{prop}\label{prop:Cartan-Eilenberg-D_(qc)(X)}
Let $\sX$ be a stack and let $K \in Ch_{qc}(\sX)$. 
Let $E \xrightarrow{\epsilon} I^{\bullet, \bullet}$ be a Cartan-Eilenberg 
resolution of $E$ in $Ch(\sX)$. 
Then $E \xrightarrow{\epsilon} \widehat{\Tot(I)}$ is a 
$K$-injective resolution of $E$.
\end{prop}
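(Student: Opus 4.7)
The plan is to follow the argument of \cite[Proposition~2.2]{AK}, which establishes the same statement for Noetherian separated Deligne--Mumford stacks; that proof uses only that $Mod(\sX)$ is a Grothendieck category and the one-sidedness (rows $q \geq 0$) of a Cartan--Eilenberg resolution, so it adapts to an arbitrary algebraic stack with no essential change. (I read $E = K$, correcting an apparent typo in the statement.) Two separate things have to be verified: (a) the map $\epsilon: E \to \widehat{\Tot(I)}$ is a quasi-isomorphism, and (b) $\widehat{\Tot(I)}$ is $K$-injective.

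For (a), I would examine the spectral sequence associated to the column filtration $F^p = \widehat{\Tot}(I^{\geq p, \bullet})$. Column-wise cohomology yields $E_1^{p,q} = H^q(I^{p, \bullet})$, which by the defining property of a Cartan--Eilenberg resolution equals $E^p$ for $q = 0$ and vanishes for $q > 0$. Hence $E_2^{p, 0} = H^p(E)$, the sequence collapses at $E_2$, and convergence is automatic because the bicomplex lies in the half plane $q \geq 0$. This gives $H^n(\widehat{\Tot(I)}) \cong H^n(E)$.

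For (b), my plan is to verify Spaltenstein's criterion by writing $\widehat{\Tot(I)}$ as an inverse limit of $K$-injective complexes. Let $I_n := I^{\geq -n, \bullet}$ denote the column truncation. Since $I_n$ is concentrated in the quadrant $\{p \geq -n,\ q \geq 0\}$, its product total complex $\widehat{\Tot}(I_n)$ at each degree $k$ is the finite direct sum $\bigoplus_{-n \leq p \leq k} I^{p, k-p}$ of injective $\sO_{\sX}$-modules, is bounded below by $-n$, and is therefore $K$-injective. The canonical projections $\widehat{\Tot}(I_{n+1}) \twoheadrightarrow \widehat{\Tot}(I_n)$ are degreewise split surjective, with kernels equal to shifts of the single column $I^{-n-1, \bullet}$, and one checks directly that $\widehat{\Tot(I)} = \varprojlim_n \widehat{\Tot}(I_n)$. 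Consequently, for any acyclic $A \in Ch(\sX)$,
\[
\Hom^\bullet\bigl(A,\,\widehat{\Tot(I)}\bigr) \;=\; \varprojlim_n \Hom^\bullet\bigl(A,\,\widehat{\Tot}(I_n)\bigr),
\]
an inverse limit of acyclic complexes of abelian groups with degreewise surjective transition maps, which is itself acyclic because $\varprojlim^1$ vanishes on Mittag--Leffler systems of surjections.

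The main obstacle is the unbounded-below case: if $E^p$ is nonzero as $p \to -\infty$, the product total complex $\widehat{\Tot(I)}^n$ is a genuine infinite product and $K$-injectivity does not follow directly from the standard fact about bounded-below complexes of injectives. The column-truncation plus Mittag--Leffler argument above is tailored precisely to bridge this gap, and constitutes the technical heart of the proof.
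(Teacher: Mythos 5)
Your part (b) --- the $K$-injectivity of $\widehat{\Tot(I)}$ --- is correct: the column-truncation tower with degreewise split surjective transition maps, combined with the fact that $\Hom^{\bullet}(A,-)$ lands in abelian groups where Mittag--Leffler applies, is exactly the content of the reference \cite[A.3]{Weibel} that the paper invokes for this half of the statement. The gap is in part (a), at precisely the point you declare ``automatic''. The column filtration $F^s = \widehat{\Tot}(I^{\geq s,\bullet})$ on the \emph{product} total complex is complete but not exhaustive when $E$ is unbounded below: in total degree $n$ the union $\bigcup_s F^s$ is $\bigoplus_{p\le n} I^{p,n-p}$, a proper subobject of $\prod_{p\le n} I^{p,n-p}$. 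Degeneration at $E_2$ therefore does not by itself give $H^n(\widehat{\Tot(I)}) \cong H^n(E)$; one needs a convergence argument for a complete, non-exhaustive filtration. Equivalently, your claim is the acyclic assembly lemma for $\Tot^{\Pi}$ of an upper half-plane double complex with exact columns, and that lemma is proved only in AB4* categories, where countable products are exact.

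This matters because $Mod(\sX)$ for the lisse-{\'e}tale topos of an algebraic stack is a Grothendieck category in which countable products are \emph{not} exact, so the ${\varprojlim}^1$/Milnor-sequence device that saves you in (b) (where everything happens in abelian groups after applying $\Hom(A,-)$) is unavailable at the level of sheaves: even for a degreewise split surjective tower one cannot identify $H^k\bigl(\prod_n C_n\bigr)$ with $\prod_n H^k(C_n)$. Tellingly, your argument never uses the hypothesis that $E$ lies in $Ch_{qc}(\sX)$, whereas the paper's proof uses it essentially: it replaces your brutal column truncations by the good truncations $\tau^{\geq p}(E)$ and $\tau^{\geq p}(I)$, so that the bounded-below case yields compatible quasi-isomorphisms $\tau^{\geq p}(E)\to\widehat{\Tot(\tau^{\geq p}(I))}$, and then passes to the inverse limit via \cite[Proposition~2.1.4]{LO}, whose Assumption~2.1.2 --- satisfied by $QC(\sX)\subseteq Mod(\sX)$ --- is exactly what controls the failure of exactness of products for complexes with quasi-coherent cohomology. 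Your claim that the argument of \cite[Proposition~2.2]{AK} ``uses only that $Mod(\sX)$ is a Grothendieck category'' is therefore not accurate, and as written your reduction in (a) is not justified; you need the quasi-coherence hypothesis (or some substitute for exactness of products) to close the gap.
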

\begin{proof}
Since $Mod(\sX)$ is a Grothendieck category and $I^{\bullet, \bullet}$ is a
Cartan-Eilenberg resolution, $\widehat{\Tot(I)}$ is a $K$-injective complex 
by \cite[A.3]{Weibel}. We only need to show that 
$E \xrightarrow{\epsilon} \widehat{\Tot(I)}$ is a quasi-isomorphism.
Let $\tau^{\geq p}(E) := 0 \to E^p/B^pE \to E^{p+1} \to \cdots$
denote the good truncation of $E$.
Then $\{\tau^{\geq p}(E) \}_{p \in \Z}$ gives an inverse system
of bounded below complexes with surjective maps and such that
$E \xrightarrow{\simeq} \varprojlim_p \tau^{\geq p}(E)$.
Let $\tau^{\geq p}(I)$ denote the double complex whose $i$-th row 
is the good truncation of the $i$-th row  of $I^{\bullet, \bullet}$
as above.

Let $L_p^{\bullet,\bullet} = {\rm Ker} (\tau^{\geq p}(I) \surj 
\tau^{\geq {p+1}}(I))$.
Then $ I^{\bullet,\bullet} \surj \tau^{\geq {p}}(I) \surj \tau^{\geq {p+1}}(I)$ 
and $ I^{\bullet,\bullet} \xrightarrow{\simeq} \varprojlim_p \tau^{\geq p}(I)$.
Therefore, 
$ \widehat{\Tot(I)} \xrightarrow{\simeq} 
\varprojlim_p \widehat{\Tot (\tau^{\geq p}(I))}. $
Moreover, since $\tau^{\geq p}(I)$ is a Cartan-Eilenberg resolution of
the bounded below complex $\tau^{\geq p}(E)$, it is known that 
for each $p \in \Z$, $\tau^{\geq p}(E) 
\xrightarrow{\epsilon_p} \widehat{\Tot(\tau^{\geq p}(I))}$
is a quasi-isomorphism.

Also, the standard properties of Cartan-Eilenberg resolutions imply that 
$B^pE \to B^pI^{\bullet, \bullet}$ is an injective resolution and 
hence, the inclusions $B^pI^{\bullet, i} \inj I^{\bullet, i}$ are all split.
In particular, the maps $\tau^{\geq {p}}(I) \surj \tau^{\geq {p+1}}(I)$
are term-wise split surjective.
Since $\tau^{\geq {p}}(I)$ are upper half plane complexes with bounded below 
rows, we conclude that the sequences
\begin{equation}
0 \to \widehat{\Tot(L_p)} \to \widehat{\Tot(\tau^{\geq p}(I))} \to
\widehat{\Tot(\tau^{\geq {p+1}}(I))} \to 0
\end{equation}
are exact and are split in each degree.

Hence, we see that $\widehat{\Tot(I)} \xrightarrow{\simeq} 
\varprojlim_p \widehat{\Tot(\tau^{\geq p}(I))}$, where each 
$\widehat{\Tot(\tau^{\geq p}(I))}$ is a bounded below complex of 
injective $\sO_{\sX}$ -modules, and 
$\epsilon$ is induced by a compatible system  
of quasi-isomorphisms $\epsilon_p$. Furthermore, 
$\widehat{\Tot(\tau^{\geq p}(I))} \to 
\widehat{\Tot(\tau^{\geq {p+1}}(I))}$
is degree-wise split surjective with kernel $\widehat{\Tot(L_p)}$, 
which is a bounded below complex of injective $\sO_{\sX}$-modules. 
Since $\sH_i(E) \in QC(\sX)$ and $QC(\sX) \subseteq Mod(\sX)$ satisfies
Assumption 2.1.2 of \cite{LO}, the proposition now follows from 
\cite[Proposition 2.1.4]{LO}.
\end{proof}

\begin{cor} \label{cor:lim-of-good-trunc}
 Let $f: \sY \to \sX$ be a morphism of stacks
 and let $E \in D_{qc}(\sY)$. Then the natural map
 ${\bf R}f_*(E) \to \varprojlim_{n} {\bf R}f_* (\tau ^{\ge n}(E) )$
 is an isomorphism in $D_{qc}(\sX)$.
 \end{cor}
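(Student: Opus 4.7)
The plan is to bootstrap on the Cartan–Eilenberg machinery already developed in \propref{prop:Cartan-Eilenberg-D_(qc)(X)} and to exploit the fact that $f_*$, being a right adjoint, commutes with honest inverse limits in $Ch(\sY)$.

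First, I would choose a Cartan–Eilenberg resolution $E \xrightarrow{\epsilon} I^{\bullet,\bullet}$ in $Ch(\sY)$. By \propref{prop:Cartan-Eilenberg-D_(qc)(X)}, the total complex $\widehat{\Tot(I)}$ is a $K$-injective resolution of $E$, so $\mathbf{R}f_*(E)$ is represented by $f_*(\widehat{\Tot(I)})$. For each integer $n$, the double complex $\tau^{\geq n}(I)$ is a Cartan–Eilenberg resolution of the bounded-below complex $\tau^{\geq n}(E)$, and the same proposition (applied to a bounded-below complex, where the classical result is anyway known) identifies $\widehat{\Tot(\tau^{\geq n}(I))}$ as a $K$-injective resolution of $\tau^{\geq n}(E)$. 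Hence each $\mathbf{R}f_*(\tau^{\geq n}(E))$ is computed by $f_*(\widehat{\Tot(\tau^{\geq n}(I))})$.

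Next I would import directly from the proof of \propref{prop:Cartan-Eilenberg-D_(qc)(X)} the two key structural facts: that $\widehat{\Tot(I)} \xrightarrow{\simeq} \varprojlim_n \widehat{\Tot(\tau^{\geq n}(I))}$ as an honest inverse limit of complexes, and that the transition maps $\widehat{\Tot(\tau^{\geq n}(I))} \to \widehat{\Tot(\tau^{\geq n+1}(I))}$ are termwise split surjective with kernel $\widehat{\Tot(L_n)}$, a bounded-below complex of injective $\sO_{\sY}$-modules. Since $f_*$ is a right adjoint, it preserves the inverse limit, and since it is additive, it preserves the termwise split surjectivity. Consequently,
\[
f_*(\widehat{\Tot(I)}) \;\xrightarrow{\simeq}\; \varprojlim_n\, f_*\!\left(\widehat{\Tot(\tau^{\geq n}(I))}\right)
\]
as complexes in $Ch(\sX)$, with termwise split surjective transition maps on the right.

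Finally, I would argue that this strict inverse limit computes the homotopy limit in $D_{qc}(\sX)$: the termwise split surjectivity of the transition maps kills $\varprojlim^1$ and identifies the naive limit with the homotopy limit through the usual Milnor short exact sequence. Combined with the two identifications of the previous paragraph, this gives the desired isomorphism $\mathbf{R}f_*(E) \xrightarrow{\simeq} \varprojlim_n \mathbf{R}f_*(\tau^{\geq n}(E))$ in $D(\sX)$; the fact that each $\mathbf{R}f_*(\tau^{\geq n}(E))$ lies in $D_{qc}(\sX)$ (using the cohomological boundedness below of $\tau^{\geq n}(E)$ and the quasi-coherence preservation results of Hall–Rydh cited above) then places the isomorphism inside $D_{qc}(\sX)$. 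The main obstacle I anticipate is bookkeeping the interplay between strict limits and homotopy limits; this is entirely handled by the termwise split surjectivity of the tower, which was the whole reason for the delicate truncation argument in the previous proposition.
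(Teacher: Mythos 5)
Your argument is correct and is precisely the route the paper intends: its one-line proof ("replace $E$ by a Cartan--Eilenberg resolution and use properties of Cartan--Eilenberg resolutions and good truncation") is exactly your expansion via \propref{prop:Cartan-Eilenberg-D_(qc)(X)}, the termwise split surjective tower $\widehat{\Tot(\tau^{\geq n}(I))}$, and the fact that the additive right adjoint $f_*$ preserves strict limits and split surjections, with the Milnor sequence identifying the strict limit with the homotopy limit.
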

 \begin{proof}
 This is easily checked by replacing $E$ by a Cartan-Eilenberg resolution
 and using properties of Cartan-Eilenberg resolutions and good truncation.
 \end{proof}

Recall that a morphism $f: \sY \to \sX$ of stacks is {\it representable}, 
if for every algebraic space $T$ and a morphism $T \to \sX$, the fiber product 
$T {\underset{\sX} \times} \sY$ is represented by an algebraic space.
If $T {\underset{\sX} \times} \sY$ is represented by a scheme 
whenever $T$ is a scheme, we say that $f: \sY \to \sX$ is {\it strongly representable}.

\begin{prop} \label{prop:excision-der-cat}
Let $f: \sY \to \sX$ be a strongly representable \'etale morphism of 
stacks. Let $\sZ \overset{i} {\inj} \sX$ be a closed substack
such that $f : \sZ {\underset {\sX} \times} \sY \to \sZ$ induces an 
isomorphism of the associated reduced stacks.
Then $f^* : D_{qc,\sZ} (\sX) \to D_{qc,\sZ {\underset{\sX} \times} \sY} (\sY)$
is an equivalence.
\end{prop}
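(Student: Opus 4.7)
The plan is to exhibit the derived pushforward ${\bf R}f_*$ as a quasi-inverse of $f^*$. Since $f$ is strongly representable and \'etale, it is flat, quasi-compact, and quasi-separated, so ${\bf R}f_*: D_{qc}(\sY) \to D_{qc}(\sX)$ is defined; by the flat base change identity used already in the proof of \lemref{lem:cpt-obj-D_(qc)(X on Z)}, ${\bf R}f_*$ commutes with restriction to the open complement $\sU := \sX \smallsetminus \sZ$, so it carries $D_{qc, \sZ'}(\sY)$ into $D_{qc, \sZ}(\sX)$, where $\sZ' := \sZ \times_{\sX} \sY$. We thus have an adjoint pair $(f^*, {\bf R}f_*)$ between $D_{qc, \sZ}(\sX)$ and $D_{qc, \sZ'}(\sY)$, and the task is to show that the unit $\eta_E: E \to {\bf R}f_* f^* E$ and the counit $\varepsilon_F: f^* {\bf R}f_* F \to F$ are natural isomorphisms.

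Applying \corref{cor:lim-of-good-trunc} to both $E$ and $f^*E$, and using that $f^*$ is exact and therefore commutes with good truncation, the unit $\eta_E$ is identified with the homotopy limit of the unit maps $\eta_{\tau^{\geq n}(E)}$ on the bounded-below truncations, and $\varepsilon_F$ is treated symmetrically. A standard d\'evissage via the truncation triangles then reduces the problem to verifying that $f^*$ and ${\bf R}f_*$ are mutually inverse equivalences on the hearts: the full subcategories of quasi-coherent sheaves on $\sX$ (resp.\ $\sY$) whose set-theoretic support lies in $\sZ$ (resp.\ $\sZ'$).

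The remaining input is geometric. Let $\sZ_{\red} \hookrightarrow \sX$ denote the reduced closed substack underlying $\sZ$, with sheaf of ideals $\sI$, and let $\sZ_{(n)} \hookrightarrow \sX$ be the closed substack defined by $\sI^{n+1}$. The base change $\sZ_{(n)} \times_{\sX} \sY \to \sZ_{(n)}$ is \'etale and, by hypothesis, induces an isomorphism on the common reduced substack $\sZ_{\red}$; topological invariance of the \'etale site of an algebraic stack, applied to the nilimmersion $\sZ_{\red} \hookrightarrow \sZ_{(n)}$, then forces $\sZ_{(n)} \times_{\sX} \sY \to \sZ_{(n)}$ to be an isomorphism for every $n \geq 0$. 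Any coherent $\sO_{\sX}$-module with support in $\sZ$ is annihilated by some $\sI^{n+1}$ and is therefore the pushforward of a coherent sheaf along the closed immersion $\sZ_{(n)} \hookrightarrow \sX$, on which $f^*$ is manifestly an equivalence. By \lemref{lem:Thom-Res}(1) an arbitrary quasi-coherent sheaf with support in $\sZ$ is the filtered colimit of its coherent subsheaves, and both $f^*$ and ${\bf R}f_*$ commute with filtered colimits (the latter because $f$ is quasi-compact and quasi-separated), yielding the equivalence on the hearts and, by the reductions above, on the full derived categories.

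The principal technical obstacle is the passage from bounded-below to unbounded complexes: Cartan-Eilenberg resolutions of unbounded complexes need not compute ${\bf R}f_*$ in general, and this is precisely the subtlety that \propref{prop:Cartan-Eilenberg-D_(qc)(X)} and \corref{cor:lim-of-good-trunc} are set up to address, making the reduction legitimate.
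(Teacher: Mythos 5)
Your proposal is correct in substance but follows a genuinely different route at the key step. The paper, like you, first reduces the unbounded case to bounded(-below) complexes via \corref{cor:lim-of-good-trunc} and the exactness of $f^*$; but for the bounded case it does not argue on the stack directly. Instead it pulls everything back along a smooth atlas $U \to \sX$, uses that $U\times_{\sX}\sY \to U$ is an \'etale morphism of schemes (this is where strong representability enters) together with the base-change isomorphism of \cite[Corollary~4.13]{HR}, and then quotes the already-established scheme/DM-stack case \cite[Proposition~3.4]{KO}. You instead re-prove that input on the stack itself: filtering by the infinitesimal neighbourhoods $\sZ_{(n)}$, using topological invariance of the \'etale site to see that $\sZ_{(n)}\times_{\sX}\sY \to \sZ_{(n)}$ is an isomorphism, and then writing every quasi-coherent sheaf supported on $\sZ$ as a filtered colimit of pushforwards from some $\sZ_{(n)}$. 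This is essentially the classical Thomason--Trobaugh excision argument transplanted to stacks; it buys self-containedness (no appeal to \cite{KO}) at the cost of needing topological invariance and flat base change for closed immersions at the stack level --- both of which, since $f$ is strongly representable, you could in any case verify on an atlas, which is exactly the paper's shortcut. Two small points to tighten: (i) your claim that the counit is ``treated symmetrically'' via \corref{cor:lim-of-good-trunc} is not literally symmetric, since $f^*$ does not commute with homotopy limits; for unbounded $F$ one should instead write $F \simeq \hocolim_n \tau^{\le n}(F)$ and use that ${\bf R}f_*$ preserves coproducts (this is what the paper does), or first record that your heart-level analysis makes ${\bf R}f_*$ $t$-exact on $D_{qc,\sZ'}(\sY)$; (ii) the d\'evissage from bounded-below complexes to the heart needs a word about complexes with infinitely many cohomology sheaves (again a coproduct-preservation argument). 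Neither is a gap in the idea, only in the write-up.
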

\begin{proof}
We set $\sW= \sZ {\underset{\sX} \times} \sY$.
Let us first assume that $E \in  D^+_{qc,\sZ} (\sX)$.
We claim that the adjunction map 
$E \to {\bf R}f_* \circ f^* (E)$ is an isomorphism.
The proof of this claim is identical to that of \cite[Proposition 3.4]{KO}
which considers the case of schemes and Deligne-Mumford stacks. 
We take a smooth atlas $s: U \to \sX$ with $U \in \Sch_k$
and note that $U {\underset{\sX}\times} \ \sY \to U$ is an
{\'e}tale morphism in $\Sch_k$ because $f$ is strongly representable.  
As in the proof of \cite[Proposition 3.4]{KO}, an application of 
\cite[Corollary~4.13]{HR} now reduces the problem to the case of schemes.
By similar arguments, if $F \in D^-_{qc, \sW} (\sY)$, the co-adjunction map 
$f^* \circ {\bf R}f_* (F) \to F$ is an isomorphism (see proof of 
\cite[Theorem~3.5]{KO} for details).

To prove the proposition, we need to show that $f^*$ is fully faithful and
essentially surjective on objects. 
To prove the first assertion, let $E \in D_{qc,\sZ} (\sX)$.
Since $f^*$ is exact, it commutes with good truncation. 
Applying this to the isomorphism $E \xrightarrow{\simeq} 
{\underset {n}{\varprojlim}}~ {\tau}^{\ge n}(E)$, we conclude
from \corref{cor:lim-of-good-trunc} and what we showed above for the
bounded below complexes that the adjunction map
$E \to {\bf R}f_* \circ f^* (E)$ is an isomorphism.
If $E' \in  D_{qc,\sZ} (\sX)$ is now another object, then
\[
\begin{array}{lll}
\Hom_{D_{qc,\sZ} (\sX)}(E, E') & \simeq &
\Hom_{D_{qc,\sZ} (\sX)}(E, {\bf R}f_* \circ f^* (E')) \\
& \simeq & \Hom_{D_{qc}(\sX)}(E, {\bf R}f_* \circ f^* (E')) \\
& {\simeq}^1 & \Hom_{D_{qc}(\sY)}(f^*(E), f^* (E')) \\
& \simeq & \Hom_{D_{qc, \sW}(\sY)}(f^*(E), f^* (E')) \\
\end{array}
\]
where ${\simeq}^1$ follows from the adjointness of $(f^*, {\bf R}f_*)$
\cite[Lemma~3.3]{AK}. 

To prove the essential surjectivity of $f^*$, let $F \in D_{qc, \sW}(\sY)$.
If $F \in D^-_{qc,\sW} (\sY)$, then we have shown above that the map
$f^* \circ {\bf R}f_*(F) \to F$ is an isomorphism. The general case 
follows from the bounded above case using the isomorphism
${\underset {n}{\varinjlim}}~ {\tau}^{\le n}(F) \overset{\simeq} \to F$.
\end{proof}

\section{Algebraic $K$-theory of nice quotient 
stacks}\label{sec:KTH}
In this section, we prove \thmref{thm:Main-1}.
Let $\sX$ be a stack. We begin with the definition and
some preliminary results on the $K$-theory spectrum for stacks.

\subsection{$K$-theory spectrum}\label{sec:K-spec}
The algebraic $K$-theory spectrum $K(\sX)$ of $\sX$
is defined to be the $K$-theory spectrum of the
complicial biWaldhausen category of perfect complexes in $Ch_{qc}(\sX)$
in the sense of \cite[\S~1.5.2]{TT}. Here, the complicial biWaldhausen category
structure is given with respect to the degree-wise split monomorphisms as 
cofibrations and quasi-isomorphisms as weak equivalences.
The homotopy groups of the spectrum $K(\sX)$ are defined to be the
$K$-groups of the stack $\sX$ and are denoted by $K_n(\sX)$. 
Note that these groups are $0$ if $n < 0$ (see \cite[\S~1.5.3]{TT}).
We shall extend this definition to negative integers later in this section.
For a closed substack $\sZ$ of $\sX$, 
$K(\sX ~ {\rm on} ~ \sZ)$ is the $K$-theory spectrum of the complicial 
biWaldhausen category of those perfect complexes on $\sX$ which are acyclic
on $\sX \setminus \sZ$.

\begin{lem}\label{lem:K-theory-crisp-stack}
For a stack $\sX$ with affine diagonal, 
the inclusion of the complicial biWaldhausen category of 
perfect complexes of quasi-coherent $\sO_{\sX}$-modules 
into the category of perfect complexes in $Ch_{qc}(\sX)$ 
induces a homotopy equivalence of their $K$-theory spectra.

Similarly for a closed substack $\sZ \inj \sX$, 
$K(\sX ~ {\rm on} ~ \sZ)$ is homotopy equivalent to the $K$-theory spectra
of the complicial biWaldhausen category of 
perfect complexes of quasi-coherent $\sO_{\sX}$-modules which are 
acyclic on $\sX \setminus \sZ$.
\end{lem}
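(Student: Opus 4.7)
The plan is to reduce the statement to an equivalence of underlying derived categories and then invoke the approximation theorem of Thomason--Trobaugh \cite[1.9.7]{TT}. The two complicial biWaldhausen categories in question share the same weak equivalences (quasi-isomorphisms) and the same cofibrations (degree-wise split monomorphisms), and cylinder/cone constructions in the smaller category coincide with those in the larger. By \cite[1.9.8]{TT} it is therefore enough to show that the inclusion induces an equivalence of triangulated categories $D^{\rm perf}(QC(\sX)) \isoto D^{\rm perf}_{qc}(\sX)$.

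The main input would be the theorem that for an algebraic stack $\sX$ with affine diagonal, the canonical functor $D(QC(\sX)) \to D_{qc}(\sX)$ is an equivalence of triangulated categories (known in increasing generality through work of Lurie, Hall, Neeman and Rydh; the affine diagonal hypothesis is precisely what guarantees that pushforward along a smooth atlas preserves quasi-coherence, which is the technical crux). Granting this, any perfect object $E \in D^{\rm perf}_{qc}(\sX)$ is isomorphic in $D_{qc}(\sX)$ to an object $E'$ coming from $Ch(QC(\sX))$, and since perfectness is intrinsic to the derived category (a bounded complex with finite Tor-amplitude after pullback to any smooth affine atlas, independent of the chosen representative), the complex $E'$ is again perfect. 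Full faithfulness on the perfect subcategories is then immediate from the same equivalence of derived categories, so the hypotheses of the approximation theorem are met.

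For the version with supports on $\sZ$, the identical argument applies after restricting both sides to the full subcategories of complexes that become acyclic under the open restriction $j^*:D_{qc}(\sX) \to D_{qc}(\sU)$, where $\sU = \sX \setminus \sZ$. Since $j^*$ is exact and preserves quasi-coherence, the equivalence $D(QC(\sX)) \simeq D_{qc}(\sX)$ commutes with it and therefore restricts to an equivalence between the corresponding categories of complexes with support on $\sZ$; \cite[1.9.8]{TT} then yields the desired homotopy equivalence. The main obstacle in the whole plan is the derived-category equivalence $D(QC(\sX)) \simeq D_{qc}(\sX)$: for schemes this goes back to Bökstedt--Neeman and is essentially classical, but for stacks it is delicate, and the affine diagonal hypothesis in the lemma is precisely the condition that makes it available.
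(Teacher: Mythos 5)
Your proposal follows the same route as the paper: identify the two complicial biWaldhausen categories as having equivalent derived categories and then apply \cite[Theorem~1.9.8]{TT}. The one point that needs correcting is the precise form of the derived-category input. You invoke the \emph{unbounded} equivalence $D(QC(\sX)) \simeq D_{qc}(\sX)$ for stacks with affine diagonal, and you are right to call it the main obstacle --- but in that generality it is actually false: Hall, Neeman and Rydh have shown that the functor $D(QC(B\G_a)) \to D_{qc}(B\G_a)$ is not an equivalence over a field of positive characteristic, even though $B\G_a$ has affine diagonal. What is true, and what the paper cites (Lurie, \emph{Tannaka duality for geometric stacks}, Theorem~3.8), is the equivalence of the \emph{left-bounded} derived categories $D^{+}(QC(\sX)) \simeq D^{+}_{qc}(\sX)$. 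Since perfect complexes are cohomologically bounded, this weaker statement already restricts to an equivalence on the perfect subcategories (and, as you note, it is compatible with $j^{*}$ for the version with supports), so your argument goes through verbatim once the input is replaced by the bounded-below equivalence.
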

\begin{proof}
For a stack $\sX$ with affine diagonal, 
the inclusion functors $\Phi: Ch(QC(\sX)) \to Ch_{qc}(\sX)$ 
and $\Phi_{\sZ}: Ch_{\sZ}(QC(\sX)) \to Ch_{qc, \sZ}(\sX)$ 
induce equivalences of their left bounded derived categories 
by \cite[Theorem 3.8]{Lur}.
Therefore, they restrict to the equivalences of the derived homotopy categories
of the biWaldhausen categories of perfect complexes of 
quasi-coherent $\sO_{\sX}$-modules (resp. with support in $\sZ$) and that 
of perfect complexes in $Ch_{qc}(\sX)$ (resp. with support in $\sZ$). 
By \cite[Theorem 1.9.8]{TT}, these inclusions therefore
induce homotopy equivalence of their $K$-theory spectra.
\end{proof}

\begin{lem} \label{lem:K-theory-Q.stack}
Let $\sX$ be a quotient stack with resolution property.
Consider the following list of complicial biWaldhausen categories:
\begin{enumerate}
\item bounded complexes of vector bundles on $\sX$.
\item perfect complexes in $Ch(QC(\sX))$.
\item perfect complexes in $Ch_{qc}(\sX)$.
\end{enumerate}
Then the obvious inclusion functors induce homotopy equivalences of all
their $K$-theory spectra. Furthermore, $K(\sX)$ is homotopy
equivalent to the algebraic $K$-theory spectrum of the exact category of vector bundles
on $\sX$.
\end{lem}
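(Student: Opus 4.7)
The plan is to establish homotopy equivalences for the successive inclusions $(1) \hookrightarrow (2) \hookrightarrow (3)$, and then appeal to the Gillet--Waldhausen theorem for the last assertion. A preliminary observation is that $\sX = [X/G]$ has affine diagonal: $\Delta_{\sX}$ is the base change of the separated diagonal $X \to X \times X$ along the affine morphism $X \times G \to X \times X$ induced by the action. Hence \lemref{lem:K-theory-crisp-stack} is applicable to $\sX$.

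The equivalence of $K$-theory spectra of (2) and (3) is then immediate from \lemref{lem:K-theory-crisp-stack}. For the inclusion (1) $\hookrightarrow$ (2), I would first note that vector bundles on $[X/G]$ correspond under $u^*$ to $G$-equivariant vector bundles on $X$, so a bounded complex of vector bundles on $\sX$ is naturally a bounded complex in $QC^G(X) = QC(\sX)$ with $G$-equivariant vector bundle entries. \propref{prop:perfect-complex-[X/G]} then asserts that any perfect complex $P \in Ch(QC(\sX))$ is quasi-isomorphic in $Ch(QC(\sX))$ to a bounded complex $E$ of vector bundles on $\sX$. This is precisely what is required for Thomason's Approximation Theorem \cite[Theorem~1.9.8]{TT}: the inclusion (1) $\hookrightarrow$ (2) is an exact functor of complicial biWaldhausen categories preserving cofibrations and weak equivalences, and it induces an equivalence on the associated triangulated homotopy categories. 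The approximation theorem then yields the desired homotopy equivalence of $K$-theory spectra.

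For the final assertion, the exact category $\mathcal{V}(\sX)$ of vector bundles on $\sX$ is a full exact subcategory of $QC(\sX)$, closed under extensions, and the resolution property ensures admissible epimorphisms are well-behaved. The Gillet--Waldhausen theorem \cite[Theorem~1.11.7]{TT} then identifies the Quillen $K$-theory $K(\mathcal{V}(\sX))$ with the $K$-theory spectrum of the complicial biWaldhausen category of bounded complexes of vector bundles on $\sX$, i.e., with (1). I expect the main obstacle to be verifying that the quasi-isomorphism produced in \propref{prop:perfect-complex-[X/G]} genuinely lifts to a morphism in the biWaldhausen category $Ch(QC(\sX))$ (rather than merely in the derived category), so that Thomason's strict approximation hypothesis is satisfied; this is however built into the proof of \propref{prop:perfect-complex-[X/G]} via the Inductive Construction Lemma \cite[Lemma~1.9.5]{TT}, which constructs $E$ together with a genuine quasi-isomorphism of complexes.
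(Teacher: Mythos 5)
Your proposal is correct and follows essentially the same route as the paper: (2)$\simeq$(3) via Lemma~\ref{lem:K-theory-crisp-stack}, (1)$\simeq$(2) via Proposition~\ref{prop:perfect-complex-[X/G]} together with Thomason's approximation theorem \cite[Theorem~1.9.8]{TT}, and the final identification via the Gillet--Waldhausen theorem \cite[Theorem~1.11.7]{TT}. Your extra care in checking that $[X/G]$ has affine diagonal (so that Lemma~\ref{lem:K-theory-crisp-stack} applies) and that the quasi-isomorphism from Proposition~\ref{prop:perfect-complex-[X/G]} is a genuine chain map is a welcome refinement of details the paper leaves implicit.
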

\begin{proof}
The inclusion of (1) in (2) induces a
homotopy equivalence of $K$-theory spectra by
\propref{prop:perfect-complex-[X/G]} and \cite[Theorem 1.9.8]{TT}.
The inclusion of (2) in (3) induces homotopy equivalence of $K$-theory spectra
by \lemref{lem:K-theory-crisp-stack}. The last assertion follows from
\cite[Theorem~1.11.7]{TT}.
\end{proof}

\subsection{The localization and excision for 
$K$-theory}\label{subsec:Loc-thm}
We now establish the localization sequence and excision for the $K$-theory
of nice quotient stacks. We begin with the following localization at the level of
$D_{qc}(\sX)$.

\begin{prop}\label{prop:Verdier-quotient}
Let $\sX$ be a nice quotient stack and let
$\sZ \inj X$ be a closed substack with open
complement $j: \sU \inj \sX$. Assume that $\sX$ has the resolution property.
Then the following hold.
\begin{enumerate}
\item
$D_{qc}(\sX)$,  $D_{qc,\sZ}(\sX)$ and $D_{qc}(\sU)$ are compactly generated.
\item
The functor
\[
j^*: \frac{D_{qc}(\sX)}{D_{qc,\sZ}(\sX)} \to D_{qc}(\sU)
\]
is an equivalence of triangulated categories.
\end{enumerate}
\end{prop}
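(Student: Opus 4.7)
My plan is to reduce the proposition to Neeman's abstract localization theorem for compactly generated triangulated categories, once the three categories in question are shown to be compactly generated.

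For the compact generation of $D_{qc}(\sX)$ and $D_{qc}(\sU)$: by \propref{prop:Com-perf}, compact objects coincide with perfect complexes on both. The open substack $\sU = [U/G]$ of $\sX = [X/G]$ (where $U \subseteq X$ is the $G$-stable open preimage of $\sU$) is again a nice quotient stack, and it inherits the resolution property, because any coherent sheaf on $\sU$ extends to a coherent sheaf on $\sX$ by \lemref{lem:Thom-Res}(1), which in turn is a quotient of a vector bundle. Combining the resolution property with \propref{prop:perfect-complex-[X/G]} and \lemref{lem:K-theory-Q.stack}, one deduces that a small set of vector bundles (surjecting onto a set of generators of $QC(\sX)$) generates $D_{qc}(\sX)$ as a triangulated category closed under small coproducts; this set consists of compact (equivalently perfect) objects. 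The same argument applies to $\sU$.

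For $D_{qc,\sZ}(\sX)$ I would produce compact generators by a Koszul-type construction. Using the resolution property, write the coherent ideal sheaf $\sI_{\sZ}$ as a quotient of a vector bundle, obtaining a morphism $\sE \to \sO_{\sX}$ whose image cuts out $\sZ$. The associated Koszul complex $K(\sE \to \sO_{\sX})$ is a perfect complex on $\sX$ with cohomology supported on $\sZ$, hence, by \lemref{lem:cpt-obj-D_(qc)(X on Z)}, it is compact in $D_{qc,\sZ}(\sX)$. Tensoring this Koszul complex with the compact generators of $D_{qc}(\sX)$ produced above (and including all shifts) yields a set $\sG$ of compact objects of $D_{qc,\sZ}(\sX)$: if an object $F \in D_{qc,\sZ}(\sX)$ satisfies $\Hom(G, F[n]) = 0$ for every $G \in \sG$ and every $n$, then $F$ is right-orthogonal in $D_{qc}(\sX)$ to all the chosen compact generators, forcing $F = 0$ by compact generation of $D_{qc}(\sX)$.

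With compact generation in hand, part (2) is formal. The inclusion $j: \sU \inj \sX$ is strongly representable and flat, so $j^*$ preserves small coproducts and admits the right adjoint $\mathbf{R}j_*$ (cf.\ the proof of \propref{prop:excision-der-cat}). A smooth-descent argument along an atlas $u: X \to \sX$, combined with flat base change in the form of \cite[Corollary~4.13]{HR} exactly as in the proof of \propref{prop:excision-der-cat}, shows that the counit $j^* \circ \mathbf{R}j_* \To \id_{D_{qc}(\sU)}$ is an isomorphism. Thus $\mathbf{R}j_*$ is fully faithful with essential image the right-orthogonal of $\ker(j^*) = D_{qc,\sZ}(\sX)$. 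Neeman's localization theorem for compactly generated triangulated categories now yields the required equivalence $D_{qc}(\sX)/D_{qc,\sZ}(\sX) \simeq D_{qc}(\sU)$.

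The main technical obstacle is the compact generation of $D_{qc,\sZ}(\sX)$: producing enough perfect complexes supported on $\sZ$ depends essentially on the resolution property, and verifying that the Koszul-type complexes generate requires some care since $\sZ$ need not be cut out by a regular sequence. Once compact generation of all three categories is secured, part (2) is a mostly formal consequence of Neeman's theorem together with the fully faithful embedding $\mathbf{R}j_*$, the latter reducing to the scheme case by smooth descent.
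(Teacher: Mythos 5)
Your treatment of part (2) is the same as the paper's: the adjunction $(j^*,\mathbf{R}j_*)$, the isomorphism $j^*\circ\mathbf{R}j_*\simeq\id$ checked after pulling back along an atlas using flat base change, and the formal Verdier/Bousfield localization argument identifying $D_{qc}(\sX)/D_{qc,\sZ}(\sX)$ with $D_{qc}(\sU)$ (the paper phrases this as ``an easy consequence of adjointness''; compact generation is not actually needed for this step). Where you genuinely diverge is part (1). The paper quotes \cite[Theorem~A]{Gross} for the resolution property of $\sU$ and, after using \propref{prop:Com-perf} to see that $\sX$ is concentrated, quotes \cite[Proposition~8.4]{HR} for the compact generation of all three categories at once. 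You instead reconstruct these inputs by hand: extending coherent sheaves from $\sU$ to $\sX$ (via \lemref{lem:Thom-Res}(1) and Noetherian exhaustion) to transfer the resolution property, taking vector bundles as compact generators of $D_{qc}(\sX)$ and $D_{qc}(\sU)$, and manufacturing compact generators of $D_{qc,\sZ}(\sX)$ by tensoring a Koszul complex of a surjection $\sE\surj\sI_{\sZ}$ with those generators. This is essentially an unwinding of the Hall--Rydh citation; it buys self-containedness at the cost of having to verify the generation statements yourself.

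One step of that verification is stated incorrectly. If $F\in D_{qc,\sZ}(\sX)$ is right-orthogonal to all shifts of $P\otimes K$ (with $P$ a compact generator of $D_{qc}(\sX)$ and $K$ the Koszul complex), adjunction gives that $\mathbf{R}\sHom(K,F)\simeq K^{\vee}\otimes F$ is right-orthogonal to all $P$, hence $K^{\vee}\otimes F=0$ by compact generation of $D_{qc}(\sX)$; it does \emph{not} give that $F$ itself is right-orthogonal to the $P$, so you cannot conclude $F=0$ directly as you claim. The missing step is to deduce $F=0$ from $K^{\vee}\otimes F=0$ together with $\supp F\subseteq\sZ$: locally the components $f_1,\dots,f_n$ of $\sE\to\sO_{\sX}$ generate $\sI_{\sZ}$, the vanishing of $K^{\vee}\otimes F$ forces each $f_i$ to act invertibly on the relevant subquotient, while the support condition forces the telescope localization at each $f_i$ to vanish; together these yield $F=0$ by induction on $n$. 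You flag that ``some care'' is needed, but the difficulty is not that $\sZ$ fails to be regularly immersed (Koszul complexes of non-regular sections serve perfectly well as generators); it is exactly this orthogonality-to-vanishing step. With that supplied --- or with the paper's citation of \cite[Proposition~8.4]{HR} --- your proof is complete.
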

\begin{proof}
The stack $\sU$ has the resolution property by our assumption and
\cite[Theorem~A]{Gross}. 
It follows from \propref{prop:Com-perf} that every perfect complex in
$D_{qc}(\sX)$ is compact, i.e., $\sX$ is {\sl concentrated}. Since
$\sX$ and $\sU$ have affine diagonal with resolution property, it follows from
\cite[Proposition~8.4]{HR} that $D_{qc}(\sX)$, $D_{qc,\sZ}(\sX)$ and
$D_{qc}(\sU)$ are compactly generated.

(2) is an easy consequence of adjointness of the functors
$(j^*, {\bf R}j_*)$ and works exactly like the case of schemes.
One checks easily that $j^*$ is fully faithful and $j^* \circ {\bf R}j_*$
is identity on $D_{qc}(\sU)$. 
\end{proof}

\begin{thm} [\bf{Localization sequence}] 
\label{thm:equiv-localization-thm}
Let $\sX$ be a nice quotient stack and let
$\sZ \inj X$ be a closed substack with open
complement $j: \sU \inj \sX$. Assume that $\sX$ has the resolution property.
Then the morphism of spectra
$K(\sX ~ {\rm on} ~ \sZ) \to K(\sX) \to K(\sU)$
induce a long exact sequence
\begin{align*}
\cdots \to K_i(\sX ~ {\rm on} ~ \sZ) \to K_i(\sX) & \to K_i(\sU) \to 
K_{i-1}(\sX ~ {\rm on} ~ \sZ) \to \cdots \\
& \to K_0(\sX ~ {\rm on} ~ \sZ) \to K_0(\sX) \to K_0(\sU).
\end{align*}
\end{thm}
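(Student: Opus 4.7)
The strategy follows the Thomason--Trobaugh approach to localization, adapted to stacks using the compact-generation results already established in the preceding sections. I will exhibit a sequence of complicial biWaldhausen categories of perfect complexes to which Waldhausen's fibration theorem applies, and then use cofinality to extract the long exact sequence.

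First I reduce all three $K$-theory spectra to convenient models. By \lemref{lem:K-theory-Q.stack} and \lemref{lem:K-theory-crisp-stack}, and since $\sU$ inherits the resolution property from $\sX$ (via the result of Gross invoked in \propref{prop:Verdier-quotient}), each of $K(\sX)$, $K(\sU)$ and $K(\sX ~{\rm on}~ \sZ)$ may be computed from the biWaldhausen category of perfect complexes in $Ch_{qc}(\sX)$, $Ch_{qc}(\sU)$, and $Ch_{qc,\sZ}(\sX)$ respectively, with quasi-isomorphisms as weak equivalences and degree-wise split monomorphisms as cofibrations.

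Next I feed in the derived-category input. By \propref{prop:Verdier-quotient}, the categories $D_{qc,\sZ}(\sX)$, $D_{qc}(\sX)$, $D_{qc}(\sU)$ are compactly generated, and $j^*$ induces an equivalence $D_{qc}(\sX)/D_{qc,\sZ}(\sX) \isoto D_{qc}(\sU)$. By \propref{prop:Com-perf} and \lemref{lem:cpt-obj-D_(qc)(X on Z)}, the compact objects in these categories are exactly the perfect complexes (with the appropriate support). The Neeman--Thomason theorem on compactly generated triangulated categories then implies that the induced sequence
\[
D^{\rm perf}_{\sZ}(\sX) \longrightarrow D^{\rm perf}(\sX) \longrightarrow D^{\rm perf}(\sU)
\]
is exact up to idempotent completion: the Verdier quotient $D^{\rm perf}(\sX)/D^{\rm perf}_{\sZ}(\sX)$ embeds fully faithfully in $D^{\rm perf}(\sU)$ and the latter is its idempotent completion.

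Finally I apply the Thomason--Waldhausen localization theorem (Theorem 1.8.2 of Thomason--Trobaugh) together with Thomason's cofinality theorem (Theorem 1.10.1 of \emph{loc.\ cit.}) to the corresponding cofibration sequence of biWaldhausen categories. The outcome is that $K(\sX ~{\rm on}~ \sZ) \to K(\sX) \to K(\sU)$ is a homotopy fibration on connective covers up to the standard $\pi_0$ defect coming from idempotent completion. This yields exactly the asserted long exact sequence, terminating at $K_0(\sX ~{\rm on}~ \sZ) \to K_0(\sX) \to K_0(\sU)$ without claiming surjectivity on the right. The principal obstacle is precisely this cofinality issue at $K_0$: $D^{\rm perf}(\sU)$ can be strictly larger than $D^{\rm perf}(\sX)/D^{\rm perf}_{\sZ}(\sX)$ by an idempotent completion, which obstructs surjectivity of the last map; this is intrinsic and is what makes the negative $K$-theory extension (treated later in the section) both natural and necessary in order to upgrade the sequence to an honest homotopy fibration of non-connective spectra.
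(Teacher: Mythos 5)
Your proposal is correct and follows essentially the same route as the paper: reduce to perfect-complex models, use \propref{prop:Com-perf}, \lemref{lem:cpt-obj-D_(qc)(X on Z)} and \propref{prop:Verdier-quotient} to invoke the Neeman--Thomason theorem on compact objects, then apply the Waldhausen localization theorem to a biWaldhausen model of the Verdier quotient and handle the $K_0$ defect by cofinality. The only cosmetic difference is that the paper quotes Neeman's covering-map lemma where you quote Thomason--Trobaugh's cofinality theorem; these play the same role.
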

\begin{proof}
It follows from \propref{prop:Com-perf}, \lemref{lem:cpt-obj-D_(qc)(X on Z)}
and \propref{prop:Verdier-quotient} 
that there is a commutative diagram of triangulated categories
\begin{equation}\label{eqn:Verdier-quotient-2}
\xymatrix@C-=0.5cm{
D_{\sZ}^{\rm perf}(\sX) \ar@{^{(}->}[d] \ar[r]
& D^{\rm perf}(\sX) \ar@{^{(}->}[d]  \ar[r]
& D^{\rm perf}(\sU) \ar@{^{(}->}[d] \\
D_{qc,\sZ}(\sX) \ar[r]
& D_{qc}(\sX) \ar[r]
& D_{qc}(\sU),\\
}
\end{equation}
where the bottom sequence is a localization sequence of
triangulated categories and the top row is the sequence of
full subcategories of compact objects of the corresponding
categories in the bottom row. Moreover, each triangulated category in the
bottom row is generated by its compact objects in the top arrow.
We can thus apply \cite[Theorem 2.1]{Neeman} to conclude that
the functor
\begin{equation}\label{eqn:Verdier-quotient-3}
\frac{D^{\rm perf}(\sX)}{D^{\rm perf}_{\sZ}(\sX)} \to D^{\rm perf}(\sU)
\end{equation}
is fully faithful, and an equivalence up to direct factors.

Let $\Sigma$ be the category whose objects are perfect complexes in 
$Ch_{qc}(\sX)$, and where a map $x \to y$ is a weak equivalence if the
restriction $x|_{\sU} \to y|_{\sU}$ is a quasi-isomorphism in  $Ch_{qc}(\sU)$. 
The cofibrations in $\Sigma$ are degree-wise split monomorphisms.
Then it is easy to see that $\Sigma$ is a complicial biWaldhausen model 
for the quotient category 
$\frac{D^{\rm perf}(\sX)}{D^{\rm perf}_{\sZ}(\sX)}$. 
Thus, by Waldhausen localization Theorem \cite[1.8.2, Theorem 1.9.8]{TT}, 
there is a homotopy fibration of spectra: 
$K(\sX ~ {\rm on} ~ \sZ) \to K(\sX) \to K(\Sigma)$.
It follows from ~\eqref{eqn:Verdier-quotient-3} and 
\cite[Lemma 0.6]{Neeman} that 
$K(\Sigma) \to K(\sU)$ is a covering map of spectra.
In particular, $K_i(\Sigma) \xrightarrow{\simeq} K_i(\sU)$ for $i \geq 1$
and $K_0(\Sigma) \inj K_0(\sU)$. 
\end{proof}

\begin{thm} [\bf{Excision}] \label{thm:equiv-excision}
Let $\sX$ be a nice quotient stack and let
$\sZ \inj \sX$ be a closed substack. 
Let $f: \sY \to \sX$ be a strongly representable \'etale morphism of 
stacks such that $f : \sZ {\underset {\sX} \times} \sY \to \sZ$  
induces an isomorphism of the associated reduced stacks.
Assume that $\sX, \sY$ have the resolution property.
Then $f^*$ induces a homotopy equivalence 
\[
f^*: K(\sX \ {\rm on} \ \sZ) \xrightarrow{\simeq} K(\sY \ {\rm on} \ 
\sZ {\underset{\sX}\times}\ \sY).
\]
\end{thm}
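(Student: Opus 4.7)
The plan is to reduce the statement to Proposition~\ref{prop:excision-der-cat} together with Thomason--Trobaugh's Waldhausen approximation (TT~1.9.8). The idea is that since $K$-theory of perfect complexes depends only on the underlying derived category of compact objects, it suffices to upgrade the derived-category equivalence already established to an equivalence on the subcategories of perfect complexes with support.

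First I would set $\sW = \sZ \times_{\sX} \sY$ and record that $\sY$ is itself a nice quotient stack: if $\sX = [X/G]$ with atlas $u: X \to \sX$, then strong representability of $f$ gives a scheme $Y = X \times_{\sX} \sY$ with a $G$-action, and an atlas $Y \to \sY$ realizing $\sY = [Y/G]$ with $G$ nice. Combined with the hypothesis that $\sY$ has the resolution property, this allows us to invoke Lemma~\ref{lem:cpt-obj-D_(qc)(X on Z)} for both $\sX$ and $\sY$, giving identifications
\[
D_{qc,\sZ}(\sX)^{c} = D^{\mathrm{perf}}_{\sZ}(\sX), \qquad
D_{qc,\sW}(\sY)^{c} = D^{\mathrm{perf}}_{\sW}(\sY).
\]

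Next I would apply Proposition~\ref{prop:excision-der-cat} to conclude that
\[
f^{*}: D_{qc,\sZ}(\sX) \;\xrightarrow{\simeq}\; D_{qc,\sW}(\sY)
\]
is an equivalence of triangulated categories. Any equivalence of triangulated categories with small coproducts preserves and reflects compactness (the defining property that $\Hom(-,-)$ commutes with coproducts is invariant under equivalence). Therefore $f^{*}$ restricts to an equivalence on compact objects, which by the identifications above reads
\[
f^{*}: D^{\mathrm{perf}}_{\sZ}(\sX) \;\xrightarrow{\simeq}\; D^{\mathrm{perf}}_{\sW}(\sY).
\]

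Finally, by definition $K(\sX \text{ on } \sZ)$ (resp.\ $K(\sY \text{ on } \sW)$) is the Waldhausen $K$-theory of the complicial biWaldhausen category of perfect complexes in $Ch_{qc}(\sX)$ (resp.\ $Ch_{qc}(\sY)$) acyclic on the open complement, with degree-wise split monomorphisms as cofibrations and quasi-isomorphisms as weak equivalences. The pullback $f^{*}$ is a complicial exact functor between these Waldhausen categories, and we have just seen that it induces an equivalence on the associated derived homotopy categories. Thomason--Trobaugh's theorem \cite[Theorem~1.9.8]{TT} then delivers the desired homotopy equivalence of $K$-theory spectra. The main obstacle is really compressed into Proposition~\ref{prop:excision-der-cat}; once that and the compact-generation results of Section~\ref{sec:compact} are in hand, the excision statement is a formal consequence via compact-object preservation and TT~1.9.8.
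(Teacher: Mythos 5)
Your proposal is correct and follows essentially the same route as the paper, whose proof simply observes that strong representability makes $\sY$ a nice quotient stack and then cites Lemma~\ref{lem:cpt-obj-D_(qc)(X on Z)} together with Proposition~\ref{prop:excision-der-cat}. You have merely made explicit the intermediate steps the paper leaves implicit (identifying compact objects with perfect complexes supported on $\sZ$, restricting the derived equivalence to compacts, and invoking \cite[Theorem~1.9.8]{TT}), all of which are sound.
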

\begin{proof}
We first observe that since $f$ is strongly representable, $\sY$ is also a nice quotient
stack. The theorem now follows directly from \lemref{lem:cpt-obj-D_(qc)(X on Z)}
and \propref{prop:excision-der-cat}.
\end{proof}

\subsection{Projective bundle formula} \label{sec:Proj-bundle-thm}
In order to define the non-connective $K$-theory of stacks, 
we need the projective bundle formula for their $K$-theory.
This formula for the equivariant $K$-theory 
was proven in \cite[Theorem~3.1]{Thom2}.
We adapt the argument of Thomason to extend it
to the $K$-theory of all stacks. Though this formula is used in this 
text only for quotient stacks, its most general form plays a
crucial role in \cite{HK}.
For details on the projective bundles over algebraic 
stacks, see \cite[Chapter 14]{LMB}.

\begin{thm}\label{thm:proj-bdle-thm}
Let $\sX$ be a stack, $\sE$ a vector bundle 
of rank $d$ and $p:\P \sE \to \sX$ the projective
bundle associated to it. Let $\sO_{\P \sE} (1)$ be the fundamental
invertible sheaf on $\P \sE$ and $\sO_{\P \sE} (i)$ its $i$-th power in
the group of invertible sheaves over $\sX$.
 
 Then the morphism of $K$-theory spectra induced by the exact functor
 that sends a sequence of $d$ perfect complexes in $Ch_{qc}(\sX)$,
 $(E_0, \cdots, E_{d-1})$ to the perfect complex
 $$
 p^* E_0 \oplus \sO_{\P \sE} (-1) \otimes p^*E_1 \oplus \cdots \oplus
 \sO_{\P \sE} (1-d) \otimes p^*E_{d-1},
 $$
 induces a homotopy equivalence:
 $$
 \Phi : {\underset{d} \prod} K(\sX) \xrightarrow{\sim} K(\P \sE).
 $$
 Similarly, for each closed substack $\sZ$, the exact functor restricts to the
 sub-category of complexes acyclic on $\sX \setminus \sZ$ to give a homotopy
 equivalence:
 $$
 \Phi : {\underset{d} \prod} K(\sX ~ {\rm on} ~ \sZ) \xrightarrow{\sim} 
 K(\P \sE ~ {\rm on} ~ \P (\sE |\sZ)).
 $$
 \end{thm}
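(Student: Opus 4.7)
The plan is to adapt Thomason's argument \cite[Theorem~3.1]{Thom2} to the stack setting, using Waldhausen's localization and additivity theorems together with a Koszul-type filtration of the category of perfect complexes on $\P\sE$. The crucial inputs I would establish first are: (a) the projection formula ${\bf R}p_*(p^*E \otimes^L F) \simeq E \otimes^L {\bf R}p_*F$ for the representable proper morphism $p$; and (b) the cohomology computation ${\bf R}p_*\sO_{\P\sE}(k) = \Sym^k(\sE^\vee)$ for $k \ge 0$, and ${\bf R}p_*\sO_{\P\sE}(k) = 0$ for $-(d-1) \le k \le -1$. Both are classical for schemes, and they extend to representable morphisms of stacks by smooth base change: pick a smooth atlas $s: U \to \sX$, form the pulled-back projective bundle $\P(s^*\sE) \to U$, and descend using \cite[Corollary~4.13]{HR}.

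Next, I would filter the complicial biWaldhausen category of perfect complexes on $\P\sE$ by full subcategories $\sC_0 \supseteq \sC_1 \supseteq \cdots \supseteq \sC_d$, where $\sC_i$ consists of those $F$ annihilated by an initial segment of the Beilinson-type functors ${\bf R}p_*(- \otimes \sO_{\P\sE}(j))$ (or variants twisted by differential forms). For each $0 \le i \le d-1$, I expect the functor $E \mapsto p^*(E) \otimes \sO_{\P\sE}(-i)$ to induce a Waldhausen-exact equivalence ${\rm Perf}(\sX) \xrightarrow{\sim} \sC_i/\sC_{i+1}$, with quasi-inverse given by an appropriate twist of ${\bf R}p_*$. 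Fully faithfulness follows from (a) and (b); the vanishing $\sC_d = 0$ is the incarnation of Beilinson's resolution of the diagonal $\Delta \subset \P\sE \times_\sX \P\sE$ and is the heart of the argument.

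The desired homotopy equivalence $\Phi$ is then obtained by iterating Waldhausen's localization theorem \cite[Theorem~1.9.8]{TT} on the short exact sequences $\sC_{i+1} \hookrightarrow \sC_i \twoheadrightarrow \sC_i/\sC_{i+1}$, producing fibre sequences $K(\sC_{i+1}) \to K(\sC_i) \to K(\sX)$; Waldhausen additivity identifies the resulting map $\prod_d K(\sX) \to K(\P\sE)$ with the exact functor in the statement. The support version on $\sZ$ goes through verbatim, since tensoring by a line bundle on $\P\sE$ preserves acyclicity on the complement of $\P(\sE|_\sZ)$, and ${\bf R}p_*$ preserves acyclicity on $\sX \setminus \sZ$ by smooth base change.

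The main obstacle will be verifying the Beilinson resolution of the diagonal in the stack setting, i.e., the vanishing $\sC_d = 0$. The construction of this resolution (a Koszul complex built from the universal quotient on $\P\sE \times_\sX \P\sE$) is purely relative in $\sE$ and so lifts from the scheme case via smooth descent along an atlas $s: U \to \sX$; however, showing that the resolution glues to an honest quasi-isomorphism on the stack requires some care with derived manipulations of quasi-coherent complexes. Having the resolution property for $\sX$ (and hence for $\P\sE$) will be essential, both to represent perfect complexes by bounded complexes of vector bundles as in Proposition~\ref{prop:perfect-complex-[X/G]} and to carry out the descent argument cleanly.
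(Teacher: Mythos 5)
Your proposal follows essentially the same route as the paper's proof: both reduce Thomason's key lemmas from \cite{Thom2} --- the computation of ${\bf R}p_*\sO_{\P \sE}(j)$, the projection formula, perfectness of ${\bf R}p_*$, and the detection of acyclicity by the twists $\sO, \sO(-1),\cdots, \sO(1-d)$ --- to the scheme case by smooth base change along an atlas via \cite[Corollary~4.13]{HR}, and then run Thomason's filtration argument with Waldhausen localization and additivity (your ``annihilator'' subcategories $\sC_i$ are the standard dual bookkeeping to the ``generated-by'' filtration, both of which appear in Thomason's own papers). One caveat: the theorem is stated for an \emph{arbitrary} stack and the paper's proof uses no resolution property --- perfection and acyclicity are smooth-local notions and the acyclicity-detection lemma descends directly, so there is no need to construct Beilinson's resolution of the diagonal on the stack or to represent perfect complexes by bounded complexes of vector bundles; making the resolution property ``essential,'' as you propose, would only establish the quotient-stack case, whereas the general form is needed elsewhere.
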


We need the following steps to prove this theorem.

\begin{lem} \label{lem:Proj-bdle-1}
 Under the hypothesis of \thmref{thm:proj-bdle-thm}, let $F$ be a perfect
 complex in $Ch_{qc}(\sX)$ or in general a complex with quasi-coherent and
 bounded cohomology. Then the canonical adjunction morphism (\ref{eqn:P1})
 is a quasi-isomorphism:
 \begin{equation} \label{eqn:P1}
 \eta: F {\overset {\sim} \to} {\bf R}p_* p^* F = 
{\bf R}p_*(\sO_{\P \sE} \otimes p^* F).
 \end{equation}
 
 In addition, for $j = 1,2, \cdots, d-1$, we have as a result of cancellation:
 \begin{equation} \label{eqn:P2}
{\bf R}p_* (\sO_{\P \sE} (-j) \otimes p^*F) \simeq 0.
 \end{equation}
\end{lem}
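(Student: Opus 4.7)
The plan is to reduce both claims to the classical cohomology computation of line bundles on $\P^{d-1}$ by combining the projection formula with flat base change. First I would invoke the projection formula for the representable, smooth, proper morphism $p : \P \sE \to \sX$: for every $F \in D_{qc}(\sX)$ with bounded cohomology (in particular $F$ perfect), there is a natural isomorphism
\[
{\bf R}p_*\bigl(\sO_{\P \sE}(-j) \otimes^{\bf L} p^* F\bigr) \simeq {\bf R}p_*\bigl(\sO_{\P \sE}(-j)\bigr) \otimes^{\bf L} F.
\]
The validity of such a projection formula for representable, concentrated morphisms of algebraic stacks is established in \cite{HR}. This reduces \eqref{eqn:P1} and \eqref{eqn:P2} to the universal case $F = \sO_{\sX}$, i.e.\ to showing that the adjunction $\sO_{\sX} \to {\bf R}p_* \sO_{\P \sE}$ is a quasi-isomorphism, and that ${\bf R}p_*\bigl(\sO_{\P \sE}(-j)\bigr) \simeq 0$ for $1 \le j \le d-1$.

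Both of these are statements about objects of $D_{qc}(\sX)$, so they can be verified smooth-locally. Choose a smooth atlas $u : U \to \sX$ with $U$ an affine scheme and form the 2-Cartesian square
\[
\xymatrix{
\P(u^*\sE) \ar[r]^{u'} \ar[d]_{p'} & \P \sE \ar[d]^{p} \\
U \ar[r]_{u} & \sX.
}
\]
Since $u$ is flat and $p$ is representable and proper, the derived flat base change isomorphism $u^*\,{\bf R}p_*(-) \simeq {\bf R}p'_*\,(u')^*(-)$ holds on $D_{qc}$ by \cite[Corollary~4.13]{HR}, and $(u')^*\sO_{\P \sE}(-j) = \sO_{\P(u^*\sE)}(-j)$. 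It therefore suffices to establish the two vanishings for the projective bundle $p' : \P(u^*\sE) \to U$ of a vector bundle on an affine scheme. Working further Zariski-locally on $U$, we may assume $u^*\sE$ is trivial, so that $\P(u^*\sE) = \P^{d-1}_U$, and the assertions become the classical cohomology facts that $\sO_U \xrightarrow{\sim} {\bf R}p'_* \sO_{\P^{d-1}_U}$ and ${\bf R}p'_* \sO_{\P^{d-1}_U}(-j) = 0$ for $1 \le j \le d-1$, which hold for any affine base by standard \v{C}ech cohomology (or by flat base change from $\Spec \Z$).

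The main obstacle is not conceptual but bookkeeping: one must ensure that the projection formula and flat base change are available in the generality of representable proper morphisms of arbitrary algebraic stacks, not merely schemes or Deligne--Mumford stacks. Once these tools from \cite{HR} are in place, the argument reduces cleanly to the scheme case, and the cancellation in \eqref{eqn:P2} follows immediately from the vanishing ${\bf R}p_*(\sO_{\P \sE}(-j)) \simeq 0$ combined with the projection formula.
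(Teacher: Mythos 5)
Your argument is correct and follows essentially the same route as the paper: both proofs hinge on the fact that the assertions are smooth-local on $\sX$ and on the base change isomorphism $u^*{\bf R}p_* \simeq {\bf R}p'_*(u')^*$ from \cite[Corollary~4.13]{HR} (valid because $p$ is strongly representable, hence concentrated) to reduce to the scheme case. The only cosmetic difference is that the paper reduces the full statement for general $F$ to schemes and then cites \cite[Lemma~3]{Thom2}, whereas you first use the stack-level projection formula of \cite{HR} to reduce to $F=\sO_{\sX}$ and then carry out the classical computation of ${\bf R}p'_*\sO_{\P^{d-1}_U}(-j)$ by hand; both steps are legitimate and the two proofs have the same mathematical content.
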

\begin{proof}
The assertion of the lemma is fppf local on $\sX$.
Let $u: U \to \sX$ be a smooth atlas for $\sX$, where $U$ is a scheme.
Since $p: \P \sE \to \sX$ is strongly representable, we can apply 
\cite[Lemma~2.5 (3), Corollary~4.13]{HR} to reduce to the case when 
$\sX \in \Sch_k$. In this latter case, the lemma is proven in 
\cite[Lemma 3]{Thom2}. 
\end{proof}

\begin{lem}\label{lem:Proj-bdle-2}
Under the hypothesis of \thmref{thm:proj-bdle-thm}, if $E$ is a perfect complex 
in $Ch_{qc}(\P \sE)$, then the following hold.
\begin{enumerate}
\item
${\bf R}p_*(E)$ is a perfect complex in $Ch_{qc}(\sX)$.
\item
If ${\bf R}p_*(E \otimes \sO_{\P \sE}(i))$ is acyclic 
on $\sX$ for $i = 0, 1, \cdots, d-1$, then $E$ is acyclic on $\P \sE$.
\end{enumerate}
\end{lem}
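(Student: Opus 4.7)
The plan is to reduce both assertions to the corresponding (known) statements for schemes, exactly in the spirit of the proof of \lemref{lem:Proj-bdle-1}. The reduction is carried out by smooth base change along an atlas of $\sX$, made possible by the strong representability of $p : \P\sE \to \sX$.

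Concretely, I would fix a smooth atlas $u : U \to \sX$ with $U \in \Sch_k$ and form the $2$-Cartesian square
\[
\xymatrix{
\P(\sE|_U) \ar[r]^-{v} \ar[d]_-{q} & \P\sE \ar[d]^-{p} \\
U \ar[r]_-{u} & \sX,
}
\]
noting that $v^*\sO_{\P\sE}(i) \cong \sO_{\P(\sE|_U)}(i)$ since the tautological bundle is compatible with base change. Because $p$ is strongly representable and $u$ is smooth (hence tor-independent with $p$), \cite[Lemma~2.5(3), Corollary~4.13]{HR} supplies a natural base change isomorphism
\[
u^*\,{\bf R}p_*(F) \;\simeq\; {\bf R}q_*\,v^*(F) \qquad \text{for every } F \in D_{qc}(\P\sE).
\]
Combined with the projection formula (also available via \cite[Corollary~4.18]{HR}), this yields, for each integer $i$,
\[
u^*\,{\bf R}p_*\bigl(E \otimes \sO_{\P\sE}(i)\bigr) \;\simeq\; {\bf R}q_*\bigl(v^*(E) \otimes \sO_{\P(\sE|_U)}(i)\bigr).
\]

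For (1), observe that $v^*(E)$ is perfect on $\P(\sE|_U)$ (perfection passes under any flat pullback, by definition). The classical scheme case of the lemma (Thomason, e.g.\ \cite[Lemma~4]{Thom2}) then ensures that ${\bf R}q_*\,v^*(E)$ is perfect on the scheme $U$. Via the base change isomorphism above (with trivial twist), this says that $u^*({\bf R}p_*(E))$ is perfect on $U$. Since $u$ is faithfully flat, \lemref{lem:Desc-perf} allows us to descend perfection to conclude that ${\bf R}p_*(E)$ is perfect on $\sX$, as required.

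For (2), the hypothesis together with the displayed base change identity shows that ${\bf R}q_*(v^*(E) \otimes \sO_{\P(\sE|_U)}(i))$ is acyclic on $U$ for $i = 0, 1, \dots, d-1$. The scheme case of part~(2), again due to Thomason, forces $v^*(E)$ to be acyclic on $\P(\sE|_U)$. The induced map $v : \P(\sE|_U) \to \P\sE$ is a smooth cover (being the pullback of the smooth surjection $u$), so acyclicity descends and $E$ is acyclic on $\P\sE$. The principal obstacle throughout is ensuring that the base change and projection formulas are valid for unbounded complexes on algebraic stacks; this is exactly what the representability of $p$ together with the results of \cite{HR} delivers, and once this is in hand everything else is a direct transfer from the known scheme case.
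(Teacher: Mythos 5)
Your proof is correct and follows essentially the same route as the paper: both reduce to the scheme case via the smooth-base-change isomorphism of \cite[Lemma~2.5(3), Corollary~4.13]{HR} applied to the strongly representable map $p$, and then invoke \cite[Lemmas~4, 5]{Thom2}. You merely make explicit the descent steps (via \lemref{lem:Desc-perf} for perfection and faithful flatness of $v$ for acyclicity) that the paper leaves implicit in the phrase ``replace $\sX$ by a scheme.''
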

\begin{proof}
Since the assertion is fppf local on $\sX$ and the perfectness is
checked by base change of $\sX$ by smooth morphisms from affine schemes,
we can use \cite[Lemma~2.5 (3), Corollary~4.13]{HR} again to replace $\sX$ by
a scheme. The part (1) then follows from \cite[Lemma 4]{Thom2} and (2)
follows from \cite[Lemma~5]{Thom2}.
\end{proof}

\begin{proof} [Proof of \thmref{thm:proj-bdle-thm}] 
The proof follows exactly along the lines of the proof of
\cite[Theorem~1]{Thom2}, using 
Lemmas~\ref{lem:Proj-bdle-1} and ~\ref{lem:Proj-bdle-2}, which
generalize \cite[Lemmas~3, 4, 5]{Thom2} to stacks.
\end{proof}

\subsection{$K$-theory of regular blow-ups of 
stacks}\label{sec:RBUP}
A closed immersion $\sY \to \sX$ of stacks over $k$ is defined
to be a {\it regular immersion} of codimension $d$ if there exists a smooth
atlas $U \to \sX$ of $\sX$ such that $\sY \times_{\sX} U \to U$ is a regular 
immersion of schemes of codimension $d$. This is well defined as $U$ is
Noetherian and regular immersions behave well under flat base change 
and satisfy fpqc descent.
For a closed immersion $i: \sY \to \sX$, the blow-up of $\sX$ along $\sY$
is defined to be 
$p: \widetilde{\sX} = \Proj (\bigoplus_{n \geq 0} \sI_{\sY}^n) \to \sX$.
See \cite[Chapter 14]{LMB} for relative proj construction on stacks.
Note that in case of a regular immersion, 
$\widetilde{\sX} \times_{\sX} \sY \to \sY$ is a projective bundle over $\sY$,
similar to schemes.

\begin{thm} \label{thm: K-theory-descent-reg-blow-up} 
Let $i: \sY \to \sX$ be a regular immersion of codimension $d$ of 
stacks. Let $p: \sX' \to \sX$ be the blow-up of $\sX$ along $\sY$
and $j: \sY' = \sY \times_{\sX} \sX' \to \sX'$, $q: \sY' \to \sY$
be the maps obtained by base change. Then the square
\begin{equation}\label{eqn:BU0}
\xymatrix@C1pc{
K(\sX) \ar[r]^{i^*} \ar[d]_{p^*} & K(\sY) \ar[d]^{q^*} \\
K(\sX') \ar[r]_{j^*} & K(\sY').}
\end{equation}
is homotopy Cartesian.
\end{thm}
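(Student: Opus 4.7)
The plan is to adapt the proof of the analogous regular blow-up formula for schemes (due to Thomason) to the stack setting, using the projective bundle theorem (\thmref{thm:proj-bdle-thm}) together with the base-change and projection-formula machinery of \cite{HR} that has been systematically invoked in \secref{sec:PCS} and \secref{sec:Proj-bundle-thm}. The key geometric input is that for a regular immersion $i\colon \sY \inj \sX$ of codimension $d$, the exceptional divisor $j\colon \sY' \inj \sX'$ is a projective bundle $q\colon \sY' = \P(\sN_{\sY/\sX}) \to \sY$ of rank $d-1$, and the conormal sheaf of $\sY'$ in $\sX'$ is $\sO_{\sY'}(1)$; all of this is local in the smooth topology on $\sX$, so it reduces to the known scheme-theoretic situation.

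The first step is to construct a semi-orthogonal decomposition of $D^{\rm perf}(\sX')$ generated by the image of $p^{*}\colon D^{\rm perf}(\sX) \to D^{\rm perf}(\sX')$ together with the $d-1$ twisted subcategories $j_{*}\bigl(q^{*}(-) \otimes \sO_{\sY'}(-k)\bigr)$ for $k = 1, \dots, d-1$. In the scheme case this is standard, and to transport it to stacks one needs (i) the projection and base-change formulas for the representable projective maps $p$ and $j$, available via \cite[Corollary~4.13, Corollary~4.18]{HR}; and (ii) the cancellation computations for $Rp_{*}$ applied to appropriate powers of $\sO_{\sX'}(-\sY')$, which descend via a smooth atlas $U \to \sX$ to the analogous scheme-theoretic facts, exactly as in the reductions performed in \lemref{lem:Proj-bdle-1} and \lemref{lem:Proj-bdle-2}.

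The second step is to translate the derived-category decomposition into a splitting at the level of $K$-theory spectra, using Waldhausen's additivity and fibration theorems \cite[\S1.7, Theorem~1.9.8]{TT}, yielding
\[
K(\sX') \simeq K(\sX) \times \prod_{k=1}^{d-1} K(\sY),
\qquad
K(\sY') \simeq \prod_{k=0}^{d-1} K(\sY),
\]
where the second equivalence is \thmref{thm:proj-bdle-thm} applied to $q\colon \sY' \to \sY$. The third step is to trace the map $j^{*}\colon K(\sX') \to K(\sY')$ through these product decompositions: on the $p^{*} K(\sX)$ summand it agrees with $q^{*} \circ i^{*}$ by the commutativity $j^{*} p^{*} = q^{*} i^{*}$, while on the twisted summands it is determined by the identity $j^{*} \sO_{\sX'}(-\sY') = \sO_{\sY'}(1)$ together with the Koszul relations on the projective bundle $\sY'$. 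A unipotent, upper-triangular change of basis (the same one appearing in the scheme-theoretic argument) then exhibits \eqref{eqn:BU0} as a homotopy Cartesian square.

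The main obstacle will be the semi-orthogonality part of the first step, namely verifying that $\Hom_{D^{\rm perf}(\sX')}\bigl(p^{*} E,\, j_{*}(q^{*} F \otimes \sO_{\sY'}(-k))\bigr) = 0$ for $E \in D^{\rm perf}(\sX)$, $F \in D^{\rm perf}(\sY)$ and $1 \le k \le d-1$, together with the analogous vanishing between twisted summands for distinct $k$. By adjunction these collapse to cohomology computations of negative twists of $\sO_{\sY'}$ along $q$ and of $\sO_{\sX'}(-k\sY')$ along $p$, and these vanishings propagate from a smooth atlas to the stack by faithfully flat descent, precisely in the manner used in the proofs of \lemref{lem:Proj-bdle-1} and \lemref{lem:Proj-bdle-2}. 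Once the semi-orthogonal decomposition is in hand, the remainder of the argument amounts to formal manipulation of direct products of $K$-theory spectra and the identification of the relevant comparison map.
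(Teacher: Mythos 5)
Your proposal is correct, and at its core it is the same argument the paper uses: descend Thomason's blow-up computations (adjunction $F\isoto Rp_*Lp^*F$, the cancellation $Rp_*(\sO_{\sX'}(-l)\otimes -)\simeq 0$, the acyclicity criterion, and the key natural transformation built from $Rj_*Lq^*$) from schemes to stacks via a smooth atlas together with the base-change and projection formulas of \cite{HR}, and then run the derived-category decomposition of $D^{\rm perf}(\sX')$ through \cite[Theorems 1.8.2, 1.9.8]{TT}. The only real difference is in the packaging of the final step. You propose Thomason's original route: establish the full semi-orthogonal decomposition, deduce the product splittings $K(\sX')\simeq K(\sX)\times\prod_{k=1}^{d-1}K(\sY)$ and $K(\sY')\simeq\prod_{k=0}^{d-1}K(\sY)$, and then identify $j^*$ as a unipotent upper-triangular matrix. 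The paper instead follows \cite[Proposition~1.5]{CHSW}: it introduces the filtrations $D_0^{\rm perf}\subset\cdots\subset D_{d-1}^{\rm perf}$ of $D^{\rm perf}(\sX')$ and $D^{\rm perf}(\sY')$, checks that $Lj^*$ respects them and induces equivalences on the successive quotients $D_{r+1}^{\rm perf}/D_r^{\rm perf}$ for $r\ge 0$, and concludes that the square \eqref{eqn:BU0} is the composite of homotopy Cartesian squares, the bottom layer being $Lp^*:D^{\rm perf}(\sX)\isoto D_0^{\rm perf}(\sX')$ versus $Lq^*:D^{\rm perf}(\sY)\isoto D_0^{\rm perf}(\sY')$. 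The filtration formulation is slightly more economical in that it never needs the explicit matrix of $j^*$ nor the full list of orthogonality vanishings between distinct twisted summands, only the identification of each graded piece; your version yields the stronger splitting statement as a by-product. Either way the burden is carried by the same geometric lemmas (\lemref{lem:Proj-bdle-1}, \lemref{lem:Proj-bdle-2}, \lemref{lem:Blow-up}), all reduced to the scheme case exactly as you indicate.
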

\begin{proof}
This is proved in \cite[Proposition~1.5]{CHSW} in the case of schemes and
an identical proof works for the case of stacks, in the presence of the 
results of \ref{sec:Proj-bundle-thm} and \lemref{lem:Blow-up}. 
We give some details on the strategy of 
the proof. 
For $r = 0, \cdots, d-1$, let $D_r^{\rm perf}(\sX') \subset D^{\rm perf}(\sX')$
be the full triangulated subcategory generated by $Lp^*F$ and 
$Rj_*Lq^*G \otimes \sO_{\sX'}(-l)$ for $F \in D^{\rm perf}(\sX)$, $G \in D^{\rm perf}(\sY)$
and $l = 1, \cdots, r$. 
Let $D_r^{\rm perf}(\sY') \subset D^{\rm perf}(\sY')$
be the full triangulated subcategory generated by
$Lq^*G \otimes \sO_{\sY'}(-l)$ for $G \in D^{\rm perf}(\sY)$ and $l = 0, \cdots, r$.
By Lemmas \ref{lem:Proj-bdle-1} and \ref{lem:Blow-up}(1), 
$Lp^*: D^{\rm perf}(\sX) \to D_0^{\rm perf}(\sX')$ and
$Lq^*: D^{\rm perf}(\sY) \to D_0^{\rm perf}(\sY')$ are equivalences.
Exactly as in \cite[Lemma~1.2]{CHSW},
one shows that $D_{d-1}^{\rm perf}(\sX') = D^{\rm perf}(\sX')$  and 
$D_{d-1}^{\rm perf}(\sY') = D^{\rm perf}(\sY')$
using Lemmas \ref{lem:Proj-bdle-2} and \ref{lem:Blow-up}.

To prove the theorem, it is sufficient to show that 
$Lj^*$ is compatible with the filtrations on $D^{\rm perf}(\sX')$ and $D^{\rm perf}(\sY')$:
\begin{equation}\label{eqn:BU1}
\xymatrix@C-=0.5cm{
D^{\rm perf}(\sX) \ar^{Li^*}[d] \ar^{Lp^*}_{\sim}[r]
& D_0^{\rm perf}(\sX') \ar^{Lj^*}[d]  \ar@{^{(}->}[r]
& D_1^{\rm perf}(\sX') \ar^{Lj^*}[d]  \ar@{^{(}->}[r]
& \cdots \ar@{^{(}->}[r]
&  D_{d-1}^{\rm perf}(\sX') = D^{\rm perf}(\sX') \ar^{Lj^*}[d]\\
D^{\rm perf}(\sY)  \ar^{Lq^*}_{\sim}[r]
& D_0^{\rm perf}(\sY')  \ar@{^{(}->}[r]
& D_1^{\rm perf}(\sY')   \ar@{^{(}->}[r]
& \cdots \ar@{^{(}->}[r]
&  D_{d-1}^{\rm perf}(\sY') = D^{\rm perf}(\sY'), 
}
\end{equation}
and that for $r = 0, \cdots ,d-2$, $Lj^*$ induces equivalences on quotient 
triangulated categories:
\[
Lj^*: D_{r+1}^{\rm perf}(\sX')/ D_r^{\rm perf}(\sX') {\overset {\sim} \to}
D_{r+1}^{\rm perf}(\sY')/ D_r^{\rm perf}(\sY').
\]
Given this, it follows from \cite[Theorems 1.8.2, 1.9.8]{TT} that every square in \eqref{eqn:BU1}
induces a homotopy Cartesian square of $K$-theory spectra.

To prove the compatibility of $Lj^*$, it is enough to check on generators and in
this case, it can be reduced to the case of schemes using \cite[Corollary~4.13]{HR}.
To prove that $Lj^*$ induces equivalence on quotients,
we first note that the composition 
\[
Lj^* \circ [\sO_{\sX'}(-r-1) \otimes Rj_*Lq^*]:  D^{\rm perf}(\sY) \to 
D_{r+1}^{\rm perf}(\sX')/ D_r^{\rm perf}(\sX') \to 
D_{r+1}^{\rm perf}(\sY')/ D_r^{\rm perf}(\sY')
\]
agrees with 
$\sO_{\sY'}(-r-1) \otimes Lq^*: D^{\rm perf}(\sY) \to 
D_{r+1}^{\rm perf}(\sY')/ D_r^{\rm perf}(\sY')$,
up to a natural equivalence.
This follows as in the proof of \cite[Lemma~1.4]{CHSW} using \cite[Corollary~4.13]{HR}.
Therefore, it is enough to show that the functors $\sO_{\sX'}(-r-1) \otimes Rj_*Lq^*$ and
$\sO_{\sY'}(-r-1) \otimes Lq^*$ are equivalences.
But the proof of this 
is exactly the same as the one in \cite[Proposition~1.5]{CHSW} for schemes.
\end{proof}

\begin{lem} \label{lem:Blow-up}
Under the hypotheses of \thmref{thm: K-theory-descent-reg-blow-up},
the following hold.
\begin{enumerate}
\item Let $F$ be a perfect complex on $\sX$. Then the canonical adjunction
morphism \eqref{eqn:BU2} is a quasi-isomorphism:
\begin{equation} \label{eqn:BU2}
\eta: F {\overset {\sim} \to} Rp_*Lp^*F = Rp_*(\sO_{\sX'} \otimes Lp^*F).
\end{equation}
\item Let $r$ be an integer such that $1 \leq r \leq d-1$.
Let $\sA'_r$ denote the full triangulated subcategory of 
$D^{\rm perf}(\sX')$
of those complexes $E$ for which $Rp_*(E \otimes \sO_{\sX'}(i)) \simeq 0$
for $0 \leq i < r$. Then there exists a natural transformation $\partial$ of
functors from $\sA'_r$ to $D^{\rm perf}(\sX')$:
\begin{equation}
\partial: (Rj_*Lq^*Rq_*(E \otimes_{\sO_{\sX'}} \sO_{\sY'}(r-1))) \otimes
\sO_{\sX'}(-r)) [-1] \to E.
\end{equation}
Moreover, $Rp_*(\partial \otimes \sO_{\sX'}(i))$ is a quasi-isomorphism
for $0 \leq i < r+1$.
\item Let $E \in D^{\rm perf}(\sX')$ such that $Rp_*(E \otimes \sO_{\sX'}(i))$
is acyclic on $\sX$ for $i = 0, \cdots, d-1$. Then $E$ is acyclic on $\sX'$.
\end{enumerate}
\end{lem}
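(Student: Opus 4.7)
The plan is to mirror the strategy of \lemref{lem:Proj-bdle-1} and \lemref{lem:Proj-bdle-2}: each of the three assertions is local for the smooth topology on $\sX$, so I reduce to the case in which $\sX$ is a scheme, where all three are essentially contained in the proof of \cite[Proposition~1.5]{CHSW} (with ancestors in \cite[Lemmas~3,~4,~5]{Thom2}). The key enabling observation is that $p:\sX'\to\sX$ is strongly representable—projective blow-ups of a stack along a regularly embedded substack become ordinary blow-ups of schemes after smooth base change—so that for any smooth atlas $u:U\to\sX$, the base change $U' = \sX'\times_{\sX}U$ is the blow-up of $U$ along the regularly embedded subscheme $\sY\times_{\sX}U$. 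Combined with \cite[Lemma~2.5(3), Corollary~4.13]{HR}, this gives the smooth base-change isomorphism $u^*\circ Rp_*\simeq Rp'_*\circ v^*$ (and its analogues for $q$ and $j$), which is the workhorse for all three parts.

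For part (1), I pull back the adjunction morphism $\eta:F\to Rp_*Lp^*F$ along $u$. Smooth base change identifies $u^*(\eta)$ with the analogous adjunction for the scheme blow-up $U'\to U$, which is a quasi-isomorphism via the classical identity $Rp'_*\sO_{U'}=\sO_U$ combined with the projection formula (valid since $p$ is concentrated by \cite[Lemma~2.5(3)]{HR}). Faithfully flat descent then promotes the isomorphism back to $\sX$. Part (3) is handled in the same spirit: acyclicity is smooth-local, so after pullback to $U$ the statement becomes the scheme case, which is contained in the proof of \cite[Proposition~1.5]{CHSW}.

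For part (2), I would construct $\partial$ purely from the counit of the adjunction $(Lj^*,Rj_*)$, the functor $\otimes\sO_{\sX'}(-r)$, and the shift $[-1]$—all operations that commute with smooth pullback by \cite[Corollary~4.13]{HR}. By design, $u^*(\partial)$ then agrees with the natural transformation produced on the scheme $U$ in \cite[Proposition~1.5]{CHSW}, and the asserted quasi-isomorphism property of $Rp_*(\partial\otimes\sO_{\sX'}(i))$ for $0\le i<r+1$ descends to $\sX$ from the scheme case by the same base-change package. The main obstacle is verifying that this recipe genuinely defines a natural transformation of functors on the stack $\sX'$, rather than merely pointwise data on each smooth chart: one must express every ingredient intrinsically on $\sX'$ and check that base change along arbitrary smooth morphisms of atlases identifies the resulting morphisms on overlaps. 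This is a formal but delicate point, and it is precisely where the strong representability of $p$ (together with the concentration results of Hall--Rydh) is used essentially to transfer the scheme-theoretic construction of \cite{CHSW} to the stack.
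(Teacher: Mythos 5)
Your proposal follows essentially the same route as the paper: all three parts are reduced to the scheme case via the smooth base-change package of \cite[Lemma~2.5(3), Corollary~4.13]{HR} applied to the strongly representable map $p$, and for (2) the natural transformation $\partial$ is obtained by invoking the naturality of the scheme-level construction. The only cosmetic difference is that the paper attributes the scheme-level statements to \cite{Thom4} (specifically Lemma~2.4(a) for the construction of $\partial$) rather than to \cite{CHSW}, but the substance is identical.
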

\begin{proof}
(1) and (3) are proved in \cite{Thom4} for schemes.
The general case can be deduced from this exactly 
as in Lemmas \ref{lem:Proj-bdle-1} and \ref{lem:Proj-bdle-2}.
For (2), the existence of $\partial$ follows from \cite[Lemma 2.4(a)]{Thom4}
as the construction of $\partial$ given there is natural in $\sX$ for schemes.
To check that $Rp_*(\partial \otimes \sO_{\sX'}(i))$ is a quasi-isomorphism
for $0 \leq i < r+1$, we may again assume that $\sX$ is a scheme 
and this case follows from Lemma 2.4(a) of {\it loc. cit.}
\end{proof}

\subsection{Negative $K$-theory of stacks}\label{sec:Neg-K}
Let $\sU \inj \sX$ be an open immersion of stacks over $k$.
Since $K_0(\sX) \to K_0(\sU)$ is not always surjective in the localization 
theorem, we want to introduce a non-connective spectrum $\K(-)$ with 
$K(-)$ as its $(-1)$-connective cover,
so that $\K(\sX \ {\rm on} \ \sZ) \to \K(\sX) \to \K(\sU)$ is a homotopy
fiber sequence for any closed substack $\sZ$ of $\sX$ with complement $\sU$. 
We define $\K$ only in the absolute case below. The construction of  
$\K(\sX \ {\rm on} \ \sZ)$ follows 
similarly, as shown in \cite{TT}. We shall use
the following version of the Bass fundamental theorem for stacks
to define $\K(\sX)$.
The homotopy groups of $\K(\sX)$ will be denoted by
$\K_i(\sX)$.

\begin{thm} [\bf{Bass fundamental theorem}] \label{thm:Bass-fund}
Let $\sX$ be a nice quotient stack with resolution property
and let $\sX[T] = \sX \times \Spec(k[T])$.
Then the following hold.
\begin{enumerate}
\item For $n \ge 1$, there is an exact sequence 
\begin{align*} 
0 \to K_n(\sX) \xrightarrow{(p_1^*, -p_2^*)}  & K_n(\sX[T]) \oplus 
K_n(\sX[T^{-1}]) \\ &  \xrightarrow{(j_1^*, j_2^*)}  K_n(\sX[T, T^{-1}])
 \xrightarrow{\partial_T} K_{n-1}(\sX) \to 0.
\end{align*}
Here $p_1^*, p_2^*$ are induced by the projections $\sX[T] \to \sX$, etc.
and $j_1^*, j_2^*$ are induced by the open immersions 
$\sX[T^{\pm1}] = \sX[T,T^{-1}] \to \sX[T]$,
etc. The sum of these exact sequences for  $n = 1, 2, \cdots$ 
is an exact sequence of graded $K_*(\sX)$-modules.\
\item For $n \ge 0$, $\partial_T: K_{n+1}(\sX[T^{\pm1}]) \to K_n(\sX)$
is naturally split by a map $h_T$ of $K_*(\sX)$-modules. 
Indeed, the cup product with $T \in K_1(k[T^{\pm1}])$ splits $\partial_T$ up to
a natural automorphism of $K_n(\sX)$.\
\item There is an exact sequence for $n = 0$:
\[
0 \to K_n(\sX) \xrightarrow{(p_1^*, -p_2^*)}  K_n(\sX[T]) \oplus 
K_n(\sX[T^{-1}])  \xrightarrow{(j_1^*, j_2^*)} K_n(\sX[T^{\pm1}]).
\]
\end{enumerate}
\end{thm}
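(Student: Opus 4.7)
The proof plan is to adapt Thomason--Trobaugh's argument \cite{TT} to the stack setting, substituting each scheme-theoretic input by its stack-theoretic analogue established earlier in this section. Observe first that $\sX[T]$, $\sX[T^{-1}]$, $\sX[T,T^{-1}]$ and $\P^1_\sX$ remain nice quotient stacks with the resolution property, so the localization theorem (\thmref{thm:equiv-localization-thm}), excision theorem (\thmref{thm:equiv-excision}) and projective bundle formula (\thmref{thm:proj-bdle-thm}) all apply to them.

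The first step is to build a Mayer--Vietoris long exact sequence for the standard Zariski cover $\P^1_\sX = \sX[T] \cup \sX[T^{-1}]$ using only localization and excision. Let $\sZ_0 \inj \P^1_\sX$ denote the closed complement of $\sX[T]$ (sitting at $T = \infty$). Excision applied to the open immersion $\sX[T^{-1}] \inj \P^1_\sX$ gives $K(\P^1_\sX \text{ on } \sZ_0) \simeq K(\sX[T^{-1}] \text{ on } \sZ_0)$, and splicing the two localization sequences associated to $\sZ_0 \subset \P^1_\sX$ and $\sZ_0 \subset \sX[T^{-1}]$ yields a Mayer--Vietoris long exact sequence
\[
\cdots \to K_n(\P^1_\sX) \to K_n(\sX[T]) \oplus K_n(\sX[T^{-1}]) \to K_n(\sX[T,T^{-1}]) \to K_{n-1}(\P^1_\sX) \to \cdots,
\]
exact at each stage down to $K_0(\sX[T,T^{-1}])$ (where surjectivity is lost because the localization sequence itself stops there).

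The second step substitutes the projective bundle decomposition, applied to $\sE = \sO_\sX^{\oplus 2}$: one gets $K_n(\P^1_\sX) \simeq K_n(\sX) \oplus K_n(\sX)$ with summands generated by $p^*$ and $[\sO_{\P^1}(-1)]\cdot p^*$. Using the trivializations of $\sO(-1)$ on $\sX[T]$ and $\sX[T^{-1}]$ by the sections $T$ and $T^{-1}$ (which differ on $\sX[T,T^{-1}]$ by the unit $T \in k[T,T^{\pm 1}]^\times$), one identifies the first summand as contributing the map $(p_1^*, -p_2^*)$ while the second summand accounts for the slack needed for exactness. Combined with the injectivity of $(p_1^*, -p_2^*)$ coming from the retraction of $p_1^*$ by evaluation at $T = 0$, this yields the four-term exact sequence of part (1) for $n \geq 1$ and the three-term sequence of part (3) for $n = 0$. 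For part (2), the splitting $h_T$ is constructed as cup product with $T \in K_1(k[T^{\pm 1}]) \subset K_1(\sX[T,T^{-1}])$ using the $K_*(\sX)$-module structure on $K$-theory supplied by tensor product of perfect complexes; the equality $\partial_T \circ h_T = \id$ up to a natural automorphism of $K_n(\sX)$ follows from the compatibility of $\partial_T$ with the module structure together with the identification of $T$ as the transition cocycle of $\sO_{\P^1}(-1)$ on the standard cover.

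The main obstacle is the bookkeeping in the second step: one must match the map coming out of the projective bundle decomposition with the Mayer--Vietoris connecting maps, carefully tracking signs and trivializations of $\sO(-1)$. All the required multiplicativity and $K_*$-module structure is supplied by tensor product of perfect complexes, so no genuinely new computational input is needed beyond what has already been established, and the calculation transcribes from the scheme case with only cosmetic changes.
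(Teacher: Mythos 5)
Your treatment of parts (1) and (2) is essentially the paper's argument (and Thomason--Trobaugh's): the projective bundle formula for $\P^1_{\sX}$, localization and excision spliced into a Mayer--Vietoris sequence, and the cup product with $T$ reduced by naturality and the $K_*(\sX)$-module structure to the computation $\partial_T(T)=\pm 1$ over the base field. That part is sound.

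There is, however, a genuine gap in how you obtain part (3). You assert that the spliced Mayer--Vietoris sequence is ``exact at each stage down to $K_0(\sX[T,T^{-1}])$,'' and you read off the three-term sequence from exactness at $K_0(\sX[T])\oplus K_0(\sX[T^{-1}])$. That exactness does not follow from the splicing. In the Barratt--Whitehead chase, exactness at $K_n(\sX[T])\oplus K_n(\sX[T^{-1}])$ requires lifting an element $b\in K_n(\sX[T])$ with $\partial(b)=0$ to $K_n(\P^1_{\sX})$, i.e.\ exactness of the localization sequence \emph{at} $K_n(\sX[T])$; for $n=0$ the localization sequence of \thmref{thm:equiv-localization-thm} terminates there and gives no such statement. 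Concretely, the image of $K_0(\P^1_{\sX})\to K_0(\sX[T])$ is exactly $\im(p_1^*)$, which is a proper subgroup whenever $N\!K_0\neq 0$, so the chase genuinely breaks down at this spot. The standard (and the paper's) remedy is to deduce (3) from (2) via the two-variable argument of \cite[Diagram~(6.1.5)]{TT}: given $(a,b)$ agreeing on $\sX[T^{\pm 1}]$, write $a=\partial_S(S\cup p^*a)$ and $b=\partial_S(S\cup p^*b)$ using the splitting $h_S$ of part (2) in an auxiliary variable $S$, apply the already-proved degree-one exactness of part (1) over $\sX[S,S^{-1}]$ to glue $S\cup p^*a$ and $S\cup p^*b$ to a class $\gamma\in K_1(\sX[S,S^{-1}])$, and set $c=\partial_S(\gamma)\in K_0(\sX)$; naturality of $\partial_S$ then gives $p_1^*c=a$ and $p_2^*c=b$. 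You should replace your derivation of (3) by this argument (or an equivalent one); as written, the middle exactness in degree zero is unjustified.
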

\begin{proof}
It follows from \cite[Lemma~2.6]{Thom1} that
$\P^1_{\sX}$ and $\sX[T]$ are nice quotient stacks with resolution property.
It follows from \thmref{thm:proj-bdle-thm} that there is an isomorphism 
$K_*(\P^1_{\sX}) \simeq K_*(\sX) \oplus K_*(\sX)$, 
where the two summands are $K_*(\sX) [\sO]$
and $K_*(\sX) [\sO(-1)]$ with respect to the external product 
$K(\sX) \wedge K(\P^1_{k}) \to K(\P^1_{\sX})$ and with
$[\sO], [\sO(-1)] \in K_0(\P^1_k)$.
As for schemes  \cite[Theorem 6.1]{TT}, Part (1) now follows directly from 
Theorems~\ref{thm:equiv-localization-thm}
and ~\ref{thm:equiv-excision}.

For (2), it suffices to show that the composite map 
$\partial_T(T \cup p^*(~)): K_n(\sX) \to K_{n+1}(\sX[T^{\pm 1}]) \to K_n(\sX)$
is an automorphism of $K_n(\sX)$ for $n \ge 0$. 
By naturality and the fact that $\partial_T$ is a map of $K_*(\sX)$ modules, 
this reduces to showing that
$\partial_T: K_1(k[T^{\pm 1}]) \to K_0(k)$ sends $T$ to $\pm 1$.
But this is well known and (3) follows from (2) using the analogue of 
\cite[Diagram~(6.1.5)]{TT} for stacks.
\end{proof}

\begin{thm}\label{thm:K-G-main}
For a nice quotient stack $\sX$ with resolution property, there is a
spectrum $\K(\sX)$ together with a natural map of spectra
$K(\sX) \to \K(\sX)$ which induces isomorphism 
$K_i(\sX) \xrightarrow{\simeq} 
\K_i(\sX)$ for $i \ge 0$. 

Let $\sY$ be a nice quotient stack with resolution property
and let $f: \sY \to \sX$ be a strongly representable {\'e}tale map.
Let $\sZ \inj \sX$ be a closed substack such that
$\sZ {\underset{\sX}\times} \ \sY \to \sZ$ induces an isomorphism
of the associated reduced stacks. Let $\pi: \P(\sE) \to \sX$ be the projective 
bundle associated to a vector
bundle $\sE$ on $\sX$ of rank $r$. Then the following hold.
\begin{enumerate}
\item
There is a homotopy fiber sequence of spectra
\[
\K(\sX \ {\rm on} \ \sZ) \to  \K(\sX) \to \K(\sX \setminus \sZ).
\]
\item
The map $f^*: \K(\sX \ {\rm on} \ \sZ)  \to 
\K(\sY \ {\rm on} \ \sZ {\underset{\sX}\times} \ \sY)$ is a homotopy 
equivalence.
\item
The map $\prod\limits_{0}^{r-1} \K(\sX) \to  \K(\P(\sE))$
that sends $(a_0, \cdots , a_{r-1})$ to $\underset{i} \Sigma \
\sO[-i] \otimes \pi^*(a_i)$, is a homotopy equivalence.
\end{enumerate}
\end{thm}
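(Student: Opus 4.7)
The plan is to mimic the Bass delooping construction of Thomason--Trobaugh (\cite{TT}, \S 6), using the Bass fundamental theorem (\thmref{thm:Bass-fund}) as the engine that extends the connective $K$-theory spectrum to a non-connective one and simultaneously upgrades the ``almost fibration'' statements of \thmref{thm:equiv-localization-thm}, \thmref{thm:equiv-excision}, and \thmref{thm:proj-bdle-thm} into honest homotopy fibrations.

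First, I would record a stability lemma: if $\sX = [X/G]$ is a nice quotient stack with the resolution property, then so are $\sX[T] = [X \times \A^1/G]$, $\sX[T^{-1}]$, and $\sX[T^{\pm 1}]$. Niceness of $G$ is unchanged (it acts trivially on the $T$-factor), and the resolution property is inherited because an ample family of $G$-equivariant line bundles on $X$ pulls back to such a family on $X \times \A^1$ and restricts to $X \times \G_m$; invoke \lemref{lem:Thom-Res}(2). The same stability applies to the auxiliary stacks appearing in (1)--(3): $\sZ$, $\sU$, $\sY$, $\sZ \times_{\sX} \sY$, $\P(\sE)$, and products of these with $\A^1$ and $\G_m$ are all nice quotient stacks with the resolution property.

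Next, define $\K(\sX)$ inductively. Set $\K^{(0)}(\sX) := K(\sX)$. Given $\K^{(n)}$, form the homotopy cofiber
\[
C^{(n)}(\sX) := \mathrm{hocofib}\bigl(\K^{(n)}(\sX[T]) \vee \K^{(n)}(\sX[T^{-1}]) \xrightarrow{(j_1^*, -j_2^*)} \K^{(n)}(\sX[T^{\pm 1}])\bigr),
\]
and set $\K^{(n+1)}(\sX) := \Sigma^{-1} C^{(n)}(\sX)$, with a canonical structure map $\K^{(n)}(\sX) \to \K^{(n+1)}(\sX)$ coming from the map into the homotopy fiber. Define $\K(\sX) := \mathrm{hocolim}_n \K^{(n)}(\sX)$. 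By parts (1) and (3) of \thmref{thm:Bass-fund}, the map $K(\sX) \to \K(\sX)$ is an isomorphism on $\pi_i$ for $i \geq 0$; in negative degrees it produces precisely the iterated Bass cokernels. Functoriality in $\sX$ is built into the construction, so we obtain the desired presheaf of spectra.

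Finally, I would verify (1)--(3) by a uniform formal argument. Each of \thmref{thm:equiv-localization-thm}, \thmref{thm:equiv-excision}, and \thmref{thm:proj-bdle-thm} applies to $\sX$, $\sX[T]$, $\sX[T^{-1}]$, $\sX[T^{\pm 1}]$ simultaneously, by the stability step above. For localization, this produces a map of sequences
\[
\K^{(n)}(\sX\ \mathrm{on}\ \sZ) \to \K^{(n)}(\sX) \to \K^{(n)}(\sU),
\]
which is a connective fiber sequence at $n = 0$ up to $K_0$-surjectivity, and at each subsequent stage the Bass cofiber construction exactly cancels the obstruction to being a fiber sequence in one lower degree; taking the homotopy colimit gives a fiber sequence in all degrees. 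The same functorial argument delivers excision (applying the Bass construction to the equivalence of \thmref{thm:equiv-excision}) and the projective bundle formula (applying it to the product decomposition of \thmref{thm:proj-bdle-thm}, using that $\P(\sE)[T^{\pm 1}] = \P(\sE \boxplus 0)$ over $\sX[T^{\pm 1}]$).

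The main obstacle is rigorously executing the induction for (1): one must show that in the long exact sequence obtained from $\K^{(n)}(\sX\ \mathrm{on}\ \sZ) \to \K^{(n)}(\sX) \to \K^{(n)}(\sU)$ together with its analogues for $\sX[T^{\pm 1}]$, the connecting map into $K_{-(n+1)}$ makes the sequence exact at each negative degree. This is where \thmref{thm:Bass-fund}(2) is crucial: the splitting $h_T$ gives compatible cup-product-with-$T$ maps that identify the iterated Bass cokernels across the three terms and allow a five-lemma argument to propagate exactness from degree $i$ to degree $i-1$. Once this step is in place, excision and the projective bundle formula follow by simpler versions of the same method, since they concern maps of spectra rather than fiber sequences.
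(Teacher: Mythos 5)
Your overall strategy is the paper's: the paper constructs $\K(\sX)$ by feeding \thmref{thm:Bass-fund} into the Thomason--Trobaugh formalism of \cite[(6.2)--(6.4)]{TT} and then deduces (1)--(3) from Theorems \ref{thm:equiv-localization-thm}, \ref{thm:equiv-excision} and \ref{thm:proj-bdle-thm} by the standard induction, exactly as you propose. However, your explicit delooping formula is wrong. You set $C^{(n)}(\sX) = \mathrm{hocofib}\bigl(\K^{(n)}(\sX[T]) \vee \K^{(n)}(\sX[T^{-1}]) \to \K^{(n)}(\sX[T^{\pm 1}])\bigr)$ with the plain wedge (coproduct) as source. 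The long exact sequence of this cofiber then gives, for $i \ge 0$, an extension $0 \to \cok\bigl(K_{i+1}(\sX[T]) \oplus K_{i+1}(\sX[T^{-1}]) \to K_{i+1}(\sX[T^{\pm 1}])\bigr) \to \pi_{i+1}(C^{(n)}) \to \ker\bigl(K_{i}(\sX[T]) \oplus K_{i}(\sX[T^{-1}]) \to K_{i}(\sX[T^{\pm 1}])\bigr) \to 0$, and by \thmref{thm:Bass-fund} \emph{both} outer terms are $K_i(\sX)$: the cokernel via $\partial_T$ and the kernel via $(p_1^*,-p_2^*)$. So $\pi_i(\K^{(1)}(\sX))$ is an extension of $K_i(\sX)$ by $K_i(\sX)$, not $K_i(\sX)$, and your claim that $K(\sX) \to \K(\sX)$ is an isomorphism on $\pi_i$ for $i \ge 0$ fails. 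Already for $\sX = \Spec(k)$ one computes $\pi_0(\K^{(1)}) \cong \Z^2$ rather than $\Z$.

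The fix is the one built into \cite[(6.2)--(6.4)]{TT}: replace the wedge by the double mapping cylinder (homotopy pushout) of $\K^{(n)}(\sX[T]) \leftarrow \K^{(n)}(\sX) \to \K^{(n)}(\sX[T^{-1}])$, whose Mayer--Vietoris sequence quotients out the unwanted diagonal copy of $K_i(\sX)$ before taking the cofiber to $\K^{(n)}(\sX[T^{\pm 1}])$; then the kernel term above vanishes by exactness at the middle of the Bass sequence and only the $\partial_T$-cokernel survives, in degree $-n-1$. A related correction: with the pushout model the composite $\K^{(n)}(\sX) \to Q \to \K^{(n)}(\sX[T^{\pm1}])$ is $q^*$, which is \emph{not} canonically null, so the structure map $\K^{(n)}(\sX) \to \K^{(n+1)}(\sX)$ is not ``the map into the homotopy fiber''; it is the cup-product-with-$T$ map $\Sigma\K^{(n)}(\sX) \to \K^{(n)}(\sX[T^{\pm1}]) \to \Sigma\K^{(n+1)}(\sX)$, and \thmref{thm:Bass-fund}(2) is exactly what guarantees it is an equivalence in the expected range. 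With these repairs your stability observations for $\sX[T]$, $\sX[T^{\pm1}]$ (the paper invokes \cite[Lemma~2.6]{Thom1}) and your deduction of (1)--(3) go through as in the paper.
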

\begin{proof}
The construction of the spectrum $\K(\sX)$ is a direct
consequence of \thmref{thm:Bass-fund} 
by the formalism given in $(6.2) - (6.4)$ of \cite{TT}.
Like for schemes, the proof of (1), (2) and (3) is a standard deduction from
Theorems~\ref{thm:equiv-localization-thm}, ~\ref{thm:equiv-excision}
and ~\ref{thm:proj-bdle-thm}, using the inductive construction of
$\K(\sX)$.
\end{proof}

\subsection{Schlichting's negative K-theory} \label{sec:Schlichting}
Negative $K$-theory of complicial biWaldhausen categories 
was defined by Schlichting \cite{Schl}.
Let $\sX$ be a nice quotient stack.
Schlichting's negative $K$-theory spectrum $K^{\rm Scl}(\sX)$
is the $K$-theory spectrum of the Frobenius pair associated to 
the category $Ch_{qc}(\sX)$. 
It follows from \cite[Theorem 8]{Schl} that $K^{\rm Scl}_i(\sX) = K_i(\sX)$ 
for $i \geq 0$. The following result shows that $K^{\rm Scl}_i(\sX)$
agrees with $\K_i(\sX)$ for $i < 0$.

\begin{thm} \label{thm:Bass-neg-k-theory=Schl}
Let $\sX$ be a nice quotient stack with resolution property.
Then there are natural isomorphisms between 
$K^{\rm Scl}_i(\sX)$ and $\K_i(\sX)$ for $i \le 0$.
\end{thm}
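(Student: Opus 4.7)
The plan is to exhibit $K^{\rm Scl}$ as a Bass-style delooping of the connective spectrum $K(\sX)$, just as $\K$ was constructed to be in \secref{sec:Neg-K}. Both spectra then satisfy the same four-term Bass exact sequence extending \thmref{thm:Bass-fund} to every integer degree, and the comparison follows by descending induction from their agreement on $\pi_i$ for $i \ge 0$ (which is \cite[Theorem~8]{Schl}).

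The first step is to verify that $K^{\rm Scl}$ satisfies localization and the projective bundle formula on nice quotient stacks with resolution property. Schlichting's fibration theorem \cite[Theorem~9]{Schl} turns a right-filtering short exact sequence of Frobenius pairs into a homotopy fibration of non-connective $K$-theory spectra. The biWaldhausen model underlying the Verdier localization $D^{\rm perf}_{\sZ}(\sX) \to D^{\rm perf}(\sX) \to D^{\rm perf}(\sU)$ supplies such a sequence; its cofinality up to direct factors was verified during the proof of \thmref{thm:equiv-localization-thm} using \cite[Theorem~2.1]{Neeman}. The same formalism applied to the decomposition of $D^{\rm perf}(\P \sE)$ underlying \thmref{thm:proj-bdle-thm} yields the corresponding projective bundle equivalence for $K^{\rm Scl}$.

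Granted these two ingredients, I would repeat the derivation of \thmref{thm:Bass-fund} verbatim for $K^{\rm Scl}$: the $\P^1$-projective bundle formula combined with the localization sequences for the open immersions $\sX[T], \sX[T^{-1}] \hookrightarrow \P^1_{\sX}$ produces the Bass four-term exact sequence connecting $K^{\rm Scl}_{n+1}(\sX[T^{\pm 1}])$ with $K^{\rm Scl}_n(\sX)$ in every degree $n \in \Z$, together with the splitting of $\partial_T$ by cup product with $T \in K_1(k[T^{\pm 1}])$. All auxiliary stacks ($\P^1_{\sX}$, $\sX[T]$, $\sX[T^{\pm 1}]$) remain nice quotient stacks with resolution property by \cite[Lemma~2.6]{Thom1}, so the argument of \thmref{thm:Bass-fund} transcribes without change.

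Finally, the natural comparison map $K(\sX) \to K^{\rm Scl}(\sX)$ is an isomorphism on $\pi_i$ for $i \ge 0$ by \cite[Theorem~8]{Schl}, and by the Bass-delooping construction of $\K$ it extends canonically to a spectrum map $\K(\sX) \to K^{\rm Scl}(\sX)$. Descending induction on $-i$, using the cokernel formula for $\K_{-n-1}$ provided by \thmref{thm:Bass-fund} together with its $K^{\rm Scl}$-analogue from the previous paragraph, shows this map is an isomorphism on $\pi_i$ for all $i \le 0$. The main obstacle is the first step, namely checking that the biWaldhausen models of $D^{\rm perf}_\sZ(\sX) \subset D^{\rm perf}(\sX)$ genuinely satisfy Schlichting's right-filtering axiom; once this is in place, everything else is a direct transcription of the scheme-theoretic arguments of \cite[\S~6]{TT} and \cite{Schl}.
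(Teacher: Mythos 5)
Your proposal follows essentially the same route as the paper: establish the $\P^1$-bundle formula and (via Schlichting's fibration theorem for Frobenius pairs) the localization sequence for $K^{\rm Scl}$, deduce the Bass four-term exact sequence in nonpositive degrees exactly as in \cite[6.6]{TT}, and then conclude by descending induction from the agreement $K^{\rm Scl}_0 = \K_0$ given by \cite[Theorem~8]{Schl}. The only difference is that you spell out the verification of the right-filtering/cofinality hypotheses that the paper leaves implicit, which is a reasonable amount of added care rather than a change of strategy.
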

\begin{proof}
Let $p : \P^1_{\sX} \to \sX$ be the projection map. Then, we can prove as in 
\thmref{thm:proj-bdle-thm} that the functors 
$p^* : D^{\rm perf}(\sX) \to D^{\rm perf}(\P^1_{\sX})$ and 
$\sO(-1) \otimes p^* : D^{\rm perf}(\sX) \to D^{\rm perf}(\P^1_{\sX})$,
which are induced by maps of their Frobenius models, induce isomorphisms
$
(p^*, \sO(-1) \otimes p^*): K^{\rm Scl}_i(\sX) \oplus K^{\rm Scl}_i(\sX) 
\xrightarrow{\simeq} K^{\rm Scl}_i(\P^1_{\sX})$ for
$i \leq 0$.
It follows from the proof of Bass' fundamental theorem in 
\cite[6.6 (b)]{TT}
that there is an exact sequence of abelian groups
$$
0 \to K^{\rm Scl}_i(\sX) \to K^{\rm Scl}_i(\sX[T]) \oplus 
K^{\rm Scl}_i(\sX[T^{-1}])
\to K^{\rm Scl}_i(\sX[T, T^{-1}]) \to K^{\rm Scl}_{i-1}(\sX) \to 0
$$
for $i \leq 0$. Since $K^{\rm Scl}_0(\sY) = \K_0(\sY)$ for any stack $\sY$, 
the negative $K$-groups coincide.
\end{proof}

\section{Nisnevich Descent for K-theory of quotient 
stacks}\label{sec:Nis-desc}
In this section, we prove Nisnevich descent in a 2-category of stacks
whose objects
are all quotients of schemes by action of a fixed group scheme. 
So let $G$ be a group scheme over $k$.
Let $\Sch^G_k$ denote the category of separated schemes of finite type over $k$
with $G$-action.
The equivariant Nisnevich topology on $\Sch^G_k$ and the homotopy
theory of simplicial sheaves in this topology was defined and
studied in detail in \cite{HKO}.  
As an application of Theorem~\ref{thm:K-G-main}, we shall
show in this section that the $K$-theory of quotient stacks for $G$-actions
satisfies descent in the equivariant Nisnevich topology on $\Sch^G_k$.

\begin{defn}$($\cite[Definition~2.1]{HKO}$)$ \label{def: Nis-cd}
A {\sl distinguished equivariant Nisnevich square} is a 
Cartesian square 
\begin{equation} \label{eqn:N-1}
\xymatrix{
B \ar[d] \ar[r]
& Y \ar[d]^{p} \\
A \ar@{^{(}->}[r]^{j}
& X  \\
}
\end{equation}
in $\Sch^G_k$ such that:
\begin{enumerate}
\item
$j$ is an open immersion, 
\item
$p$  is \'etale, and 
\item
the induced map $(Y \setminus B)_{\red} \to (X \setminus A)_{\red}$ 
of schemes (without reference to the $G$-action) is an isomorphism.
\end{enumerate}
\end{defn}

\begin{remk}\label{remk:G-inv-red}
We remark here that given a Cartesian square of the type 
~\eqref{eqn:N-1} in $\Sch^G_k$, the closed subscheme $(X \setminus A)_{\red}$ 
(or $(Y \setminus B)_{\red}$) may not in general be $G$-invariant, unless
$G$ is smooth. However, it follows from \cite[Lemma~2.5]{Thom} that
we can always find a $G$-invariant closed subscheme $Z \subset X$ such that
$Z_{\rm red} = X \setminus A$. This closed subscheme can be assumed to be
reduced if $G$ is smooth. Using the elementary fact that
a morphism of schemes is {\'e}tale if and only if the
induced map of the associated reduced schemes is {\'e}tale, 
it follows immediately that the condition (3) in Definition~\ref{def: Nis-cd}
is equivalent to:

\vskip.1cm

\hspace*{.3cm} (3') there is a $G$-invariant closed subscheme $Z \subset X$
with support $X \setminus A$ such that the map $Z \times_X Y \to Z$
in $\Sch^G_k$ is an isomorphism. 
\end{remk}

The collection of distinguished equivariant Nisnevich squares 
forms a $cd$-structure in the sense of \cite{Voev}.
The associated Grothendieck topology is called the equivariant 
{\sl Nisnevich} topology. It is also called the $eN$-topology.
It follows from \cite[Theorem 2.3]{HKO} that the equivariant Nisnevich 
$cd$-structure on $\Sch^G_k$ is complete, regular, and bounded. 
We refer to \cite[\S~2]{Voev} for the definition of a complete, 
regular, and bounded $cd$-structure.

Let $\Sch^G_{k/\Nis}$ denote the category of 
$G$-schemes $X$, such that $X$ admits a family of $G$-equivariant ample line bundles,
equipped with the equivariant Nisnevich topology.
Note that all objects of $\Sch^G_{k/\Nis}$ have the resolution property by
\lemref{lem:Thom-Res}.
It follows from \cite[Corollary 2.11]{HKO} that for a sheaf $\sF$ of abelian
groups on $\Sch^G_{k/\Nis}$, $H^i_{G/\Nis}(X, \sF) = 0$ for $i > \dim(X)$.

\begin{defn} \label{def:Splitting}
An equivariant morphism $Y \to X$ in $\Sch^G_k$ \sl{splits} 
if there is a filtration of $X$ by $G$-invariant closed subschemes
\begin{equation}\label{eqn:split*-0}
\emptyset = X_{n+1} \subsetneq X_{n} \subsetneq \cdots \subsetneq X_0 = X, 
\end{equation}
such that for each $j$, the map
$\left(X_j \setminus X_{j+1}\right) \times_X Y \to X_j \setminus X_{j+1}$
has a $G$-equivariant section. If $f$ is \'etale and surjective, 
the morphism is called an equivariant \sl{split \'etale cover} of $X$.
\end{defn}

\subsection{Equivariant Nisnevich covers}\label{sec:ENC}
In \cite[Proposition 2.15]{HKO}, it is shown that
an equivariant \'etale morphism $Y \to X$ in $\Sch^G_k$ is an 
equivariant Nisnevich cover if and only if it splits.
Further, when $G$ is a finite constant group scheme, it is shown that
an equivariant \'etale map $f : Y \to X$ in $\Sch^G_k$ is an equivariant 
Nisnevich cover if and only if for any point $x \in X$, there is a point 
$y \in Y$ 
such that $f(y) = x$ and $f$ induces isomorphisms $k(x) \simeq k(y)$ and 
$S_y \simeq S_x$. Here, for a point $x \in X$, the set-theoretic
stabilizer $S_x \subseteq G$ is defined by $S_x = \{g \in G~|~ g.x = x \}$
\cite[Proposition 2.17]{HKO}. 

Let $G^0$ denote the connected component of the identity element in $G$.
Suppose that $G$ is of the form 
$G = \stackrel{r}{\underset{i =0}\coprod} g_iG^0$,
where $\{e = g_0, g_1, \cdots , g_r\}$ are points in $G(k)$ which
represent the left cosets of $G^0$. In the next proposition, 
we give an explicit description of the equivariant Nisnevich covers
of reduced schemes $X \in \Sch^G_k$.
For $x \in X$, let $\widetilde{S_x} := 
\{g_i~|~ 0\leq i \leq r ~ ; ~ g_i.x = x\}$.

\begin{prop}\label{prop:Nisne-split}
Let $G$ be a smooth affine group scheme over $k$ as above. 
A morphism $f: Y \to X$ in $\Sch^G_k$ 
is an equivariant split \'etale cover of a reduced scheme $X$ if and only if 
for any point $x \in X$, there is a point $y \in Y$ 
such that $f(y) = x$ and $f$ induces isomorphisms $k(x) \simeq k(y)$ and 
$\widetilde{S_y} \simeq \widetilde{S_x}$.
\end{prop}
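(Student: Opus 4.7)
The plan is to adapt the strategy used for finite constant groups in \cite[Proposition~2.17]{HKO} to the smooth case by exploiting the connected-\'etale sequence $1 \to G^0 \to G \to G/G^0 \to 1$. Writing $G = \coprod_{i=0}^{r} g_i G^0$, the set $\widetilde{S_x}$ encodes exactly the action of the finite group of connected components on the point $x$, so it plays the role that $S_x$ played in the finite case. The forward direction is essentially formal, while the backward direction requires a Noetherian induction whose inductive step builds a $G$-equivariant section over a $G$-invariant open subscheme of $X$ from pointwise stabilizer data.

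For the forward direction, assume $f$ splits with filtration $\emptyset = X_{n+1} \subsetneq X_n \subsetneq \cdots \subsetneq X_0 = X$ and $G$-equivariant sections $s_j$ over each stratum $X_j \setminus X_{j+1}$. Given $x \in X$, choose the unique $j$ with $x \in X_j \setminus X_{j+1}$ and set $y = s_j(x)$. Since $s_j$ is a section of an \'etale morphism, $k(x) \isoto k(y)$. For $g_i \in \widetilde{S_x}$, the $G$-equivariance of $s_j$ gives $g_i \cdot y = s_j(g_i \cdot x) = s_j(x) = y$, so $g_i \in \widetilde{S_y}$; conversely $f(g_i \cdot y) = g_i \cdot x$ forces the reverse inclusion.

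For the backward direction, by Noetherian induction on $\dim X$ (using \cite[Lemma~2.5]{Thom} to equip closed complements with $G$-invariant reduced structures that inherit the hypothesis), it suffices to produce a $G$-invariant open $U \subseteq X$ over which $f$ admits a $G$-equivariant section. Pick a generic point $\eta$ of an irreducible component of $X$ and let $y \in Y$ over $\eta$ be supplied by the hypothesis. Since $f$ is \'etale with $k(\eta) \isoto k(y)$, there exist an open $V \ni \eta$ and a section $s : V \to f^{-1}(V)$; after shrinking $V$, we may assume the residue field and stabilizer equalities persist at every point of $V$. Define $\tilde s$ on $U := G \cdot V$ by $\tilde s(g \cdot v) := g \cdot s(v)$.

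The main obstacle is well-definedness of $\tilde s$: we must show that if $h \cdot v = v'$ for $v, v' \in V$ and $h \in G$, then $h \cdot s(v) = s(v')$. Decomposing $h = g_i h_0$ with $h_0 \in G^0$, the component indexed by $g_i$ is controlled by the hypothesis $\widetilde{S_v} = \widetilde{S_{s(v)}}$ (applied suitably along the $G^0$-orbit of $v$), while the $G^0$-part is handled by connectedness: the orbit map $G^0 \to Y$, $h_0 \mapsto h_0 \cdot s(v)$, carries the closed subgroup scheme $G^0 \cap S_v$ into the discrete fiber $f^{-1}(v)$, so on each connected component it is constant, forcing $G^0 \cap S_v \subseteq S_{s(v)}$; combined with the $\widetilde{S}$-equality and \'etaleness this promotes to $S_v = S_{s(v)}$ (smoothness of $G$ ensures these scheme-theoretic stabilizers are determined by their closed points together with their component structure). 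Once well-definedness is established, $\tilde s$ is a morphism of schemes since the action map $G \times V \twoheadrightarrow U$ is faithfully flat and $\tilde s$ descends along it, and it is manifestly a $G$-equivariant section of $f$.
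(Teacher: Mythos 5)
Your overall plan (produce a section near a generic point, spread it over the $G$-orbit of an open set, and finish by Noetherian induction) is the same as the paper's, but the step where you make the section equivariant contains genuine gaps. The most concrete one is the connectedness argument: the orbit map $h_0\mapsto h_0\cdot s(v)$ restricted to $G^0\cap S_v$ is indeed locally constant into the discrete fiber, but $G^0\cap S_v$ need not be connected even though $G^0$ is (e.g.\ the stabilizer $\mu_n\subset\G_m$ for the weight-$n$ action on $\A^1$), so you only get that the identity component of $G^0\cap S_v$ lies in $S_{s(v)}$, not the containment you claim. The subsequent ``promotion'' to an equality of scheme-theoretic stabilizers $S_v=S_{s(v)}$ from the purely set-theoretic data $\widetilde S$ (which only records which coset representatives $g_i$ fix a point and sees nothing of $G^0\cap S_v$) is asserted rather than proved, and is essentially equivalent to the equivariance you are trying to establish. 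Two further points need justification: that the $\widetilde S$-equality persists on a shrunken $V$ is not a consequence of the pointwise hypothesis at other points of $V$ (the hypothesis produces \emph{some} good preimage of $v$, not $s(v)$); it is true, but only because $\{v: g_i s(v)=s(v)\}$ is closed and contains the generic point of an integral $V$. And the well-definedness/descent check must be carried out on the scheme $(G\times V)\times_U(G\times V)$, not merely on field-valued points with $h=g_ih_0$ a stabilizing element, since $hv=v'$ with $v\neq v'$ is not a stabilizer condition. The correct use of connectedness here is that two lifts of the same map along the \'etale separated $f$ agree on a clopen subset, applied to the irreducible $G^0\times V$ -- not to $G^0\cap S_v$.

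The paper's proof avoids all of this by first passing to the regular locus $W\subseteq X$ (dense, open, $G$-invariant since $X$ is reduced), so that $W$ and $U=Y\times_XW$ are disjoint unions of integral normal components. Then $U_y=\overline{\{y\}}\to W_x=\overline{\{x\}}$ is \'etale and birational onto a normal target, hence an open immersion, and after shrinking an isomorphism; the equivariant section over the $G$-invariant open $GW_x$ is obtained as the inverse of $GU_y\to GW_x$, a disjoint union of translates $g_{j_i}U_y\to g_{j_i}W_x$ of that isomorphism. This map is an isomorphism precisely when no two components of $GU_y$ hit the same component of $GW_x$, and that is exactly what $\widetilde{S_y}\simeq\widetilde{S_x}$ at the single generic point rules out: if $g_{j_i}U_y\neq U_y$ but $f(g_{j_i}U_y)=W_x$, then $g_{j_i}\in\widetilde{S_x}\setminus\widetilde{S_y}$. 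Equivariance of the section is then automatic (it is the inverse of an equivariant isomorphism), so no pointwise stabilizer analysis, no scheme-theoretic stabilizers, and no descent datum are needed. To repair your argument you should import this reduction to the regular locus and the component-counting use of $\widetilde S$.
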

\begin{proof}
It is clear that a split \'etale $G$-equivariant family of morphisms satisfies
the given conditions. The heart of the proof is to show the converse.

Suppose $Y \xrightarrow{f} X$ is such that
for any point $x \in X$, there is a point $y \in Y$ 
such that $f(y) = x$ and $f$ induces isomorphisms $k(x) \simeq k(y)$ and 
$\widetilde{S_y} \simeq \widetilde{S_x}$.
Let $W$ be the regular locus of $X$. Then $W$ is a $G$-invariant dense 
open subscheme of $X$.
Set $U = Y \times_X W$. 
Notice that $W$ is a disjoint union of its irreducible components and each
$f_U$ being \'etale, it follows that $U$ is regular and hence a disjoint union
of its irreducible components. 

Let $x \in W$ be a generic point of $W$. Then the closure $W_x = 
\ov{\{x\}}$ in $W$ is an irreducible component of $W$. By our assumption,
there is a point $y \in U$ such that 
\begin{equation}\label{eqn:split-0}
f(y) = x, \ k_x \xrightarrow{\simeq} k_y,  \
{\rm and} \ \widetilde{S_y} \xrightarrow{\simeq} \widetilde{S_x}.
\end{equation}
Then the closure $U_y = \ov{\{y\}}$ in $U$ is an irreducible component of 
$U$. Since $U_y \to W_x$ is \'etale and generically an isomorphism, it
must be an open immersion. Thus $f$ maps $U_y$ isomorphically onto an
open subset of $W_x$. We replace $W_x$ by this open subset $f(U_y)$
and call it our new $W_x$. 

Let $GU_y$ be the image of the action morphism $\mu: G \times U_y \to U$.
Notice that $\mu$ is a smooth map and hence open.
This in particular implies that $GU_y$ is a $G$-invariant open subscheme
of $U$ as $U_y$ is one of the disjoint irreducible components of $U$
and hence open. By the same reason, $GW_x$ is a $G$-invariant open
subscheme of $W$.  

Since the identity component $G^0$ is connected, it keeps $U_y$ invariant. 
Therefore, $y \in U$ is fixed by $G^0$ and hence $G$ acts on this point
via its quotient $\ov{G} = G/{G^0}$.
Since each $g_jG^0$ takes $U_y$ onto an irreducible component of $U$
and since $U$ has only finitely many irreducible components which are all
disjoint, we see that $GU_y = U_{0} \amalg U_{1} \amalg \cdots 
\amalg U_{n}$ is a disjoint union of some irreducible components of
$U$ with $U_{0} = U_y$. In particular, for each $U_{j}$, we have
$U_{j} = g_{j_i}G^0U_y = g_{j_i}U_y$ for some $j_i$. 

Since $f$ maps $U_y$ isomorphically onto $W_x$, we conclude from the above 
that $f$ maps each $U_{j}$ isomorphically onto one and only one
$W_{j}$ such that $GW_x = f \left(GU_y\right) =
W_{0} \amalg W_{1} \amalg \cdots \amalg W_{m}$ (with $m \le n$) is a 
disjoint union of open subsets of some irreducible components of $W$ with 
$W_{0} = W_x$. 
The morphism $f$ will map the open subscheme $GU_y$ isomorphically onto the 
open subscheme $GW_x$ if and only if no two components of $GU_y$ are mapped
onto one component of $GW_x$. This is ensured by using the
condition ~\eqref{eqn:split-0}.

If two distinct components of $GU_y$ are mapped onto 
one component of $GW_x$,
we can (using the equivariance of $f$) apply automorphisms by $g_j$'s 
and assume that one of these
components is $U_y$. In particular, we can find
$j \ge 1$ such that
\begin{equation}\label{eqn:split-1}
W_x = f\left(U_y\right) = f\left(U_{j}\right)
= f\left(g_{j_i} U_{y}\right) = g_{j_i} f\left(U_{y}\right) = g_{j_i}W_x.
\end{equation}

But this implies that $g_{j_i} \in \widetilde{S_x}$ and $g_{j_i} \notin 
\widetilde{S_y}$. 
This violates the condition in ~\eqref{eqn:split-0} that 
$\widetilde{S_y}$ and $\widetilde{S_x}$ are isomorphic.     
We have thus shown that the morphism $f$ has a $G$-equivariant splitting 
over a non-empty $G$-invariant open subset $GW_x$.
Letting $X_1$ be the complement of this open subset in $X$ with reduced 
scheme structure, we see that
$X_1$ is a proper $G$-invariant closed subscheme of $X$ and by
restricting our cover to $X_1$, we get a cover for
$X_1$ satisfying the given conditions. 
The proof of the proposition is now completed by the Noetherian induction. 
\end{proof}

\subsection{Equivariant Nisnevich descent}\label{sec:Des}
It is shown in \cite[\S~3]{HKO} 
that the category of presheaves of $S^1$-spectra
on $\Sch^G_{k/\Nis}$ (denoted by $Pres(\Sch^G_{k/\Nis})$) 
is equipped with the global and local injective
model structures. A morphism $f:\sE \to \sE'$ of presheaves of spectra is 
called a {\sl global weak equivalence} if $\sE(X) \to \sE'(X)$ is a weak 
equivalence of $S^1$-spectra for every object $X \in \Sch^G_{k/\Nis}$. 
It is a global injective cofibration if  $\sE(X) \to \sE'(X)$ is a 
cofibration of $S^1$-spectra for every object $X \in \Sch^G_{k/\Nis}$.
The map $f$ is called a {\sl local weak equivalence} if it induces an 
isomorphism on the sheaves of stable homotopy groups of the presheaves of 
spectra in the $eN$-topology. A local (injective)
cofibration is the same as a  global injective cofibration.

A presheaf of spectra $\sE$ on $\Sch^G_{k/\Nis}$ is said to satisfy the 
{\sl equivariant Nisnevich descent} ($eN$-descent) if the fibrant replacement
map $\sE \to \sE'$ in the local injective model structure
of $Pres(\Sch^G_{k/\Nis})$ is a global weak equivalence. 
Let $\sK^G$ denote the presheaf of spectra on $\Sch^G_k$ which 
associates the spectrum $\K([X/G])$ to any $X \in \Sch^G_k$ .
As a consequence of \thmref{thm:K-G-main}, we obtain the following.

\begin{thm}\label{thm:MNDS}
Let $G$ be a nice group scheme over $k$.
Then the  presheaf of spectra $\sK^G$ on $\Sch^G_{k/\Nis}$ 
satisfies the equivariant Nisnevich descent.
\end{thm}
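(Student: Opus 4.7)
By \cite[Theorem~2.3]{HKO}, the equivariant Nisnevich $cd$-structure on $\Sch^G_{k/\Nis}$ is complete, regular, and bounded. Voevodsky's descent criterion for bounded, regular, complete $cd$-structures, applied to presheaves of $S^1$-spectra as in \cite[\S~3]{HKO}, then reduces the claim to verifying two conditions: $\sK^G(\emptyset) \simeq *$, and, for every distinguished equivariant Nisnevich square ~\eqref{eqn:N-1}, the induced square
\[
\xymatrix{
\sK^G(X) \ar[r] \ar[d] & \sK^G(Y) \ar[d]\\
\sK^G(A) \ar[r] & \sK^G(B)
}
\]
is homotopy Cartesian. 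The first condition is immediate from the definition.

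To verify the second, use \remref{remk:G-inv-red} to choose a $G$-invariant closed subscheme $Z \subseteq X$ with support $X \setminus A$, and set $W = Z \times_X Y$. Since every object of $\Sch^G_{k/\Nis}$ carries a $G$-equivariant ample family, \lemref{lem:Thom-Res}(2) guarantees that the quotient stacks $[X/G], [Y/G], [A/G], [B/G]$ are nice with the resolution property. The inclusions $[Z/G] \inj [X/G]$ and $[W/G] \inj [Y/G]$ are closed substacks with open complements $[A/G]$ and $[B/G]$, respectively.

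The strategy is to identify the square above as the right-hand face of a map of homotopy fiber sequences. Applying \thmref{thm:K-G-main}(1) to both pairs yields a commutative diagram
\[
\xymatrix@C0.6pc{
\K([X/G] \, {\rm on}\, [Z/G]) \ar[r] \ar[d] & \K([X/G]) \ar[r] \ar[d] & \K([A/G]) \ar[d] \\
\K([Y/G] \, {\rm on}\, [W/G]) \ar[r] & \K([Y/G]) \ar[r] & \K([B/G])
}
\]
in which both rows are homotopy fiber sequences. The left vertical arrow is a homotopy equivalence by excision, \thmref{thm:K-G-main}(2): the morphism $[Y/G] \to [X/G]$ is strongly representable étale (it descends from the equivariant étale map $p: Y \to X$), and condition (3') of \remref{remk:G-inv-red} provides the required isomorphism of reduced stacks over $[Z/G]$. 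Consequently the right-hand square is homotopy Cartesian, and the descent criterion is satisfied.

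\textbf{Main obstacle.} The substantive work sits entirely in the verification of the excision and localization hypotheses for the stacks $[X/G], [Y/G]$, and their closed substacks $[Z/G], [W/G]$; once those are invoked from \thmref{thm:K-G-main}, the descent follows formally from Voevodsky's criterion. The most delicate point to check is that $[Y/G] \to [X/G]$ meets the strong representability requirement used in the excision theorem---representability and étaleness descend from $p$, but one should confirm that this suffices in the 2-category under consideration. Granting that, the reduction to \thmref{thm:K-G-main}(1),(2) and a Mayer-Vietoris comparison of fiber sequences is routine.
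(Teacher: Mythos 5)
Your proof is correct and follows essentially the same route as the paper: reduce via the completeness/regularity/boundedness of the $eN$-$cd$-structure and Voevodsky's criterion to checking distinguished squares, then deduce the homotopy Cartesian property from the localization and excision statements of \thmref{thm:K-G-main}. The paper leaves the comparison of fiber sequences implicit ("an immediate consequence of \thmref{thm:K-G-main}"), and your write-up simply fills in those details, including the correct use of \remref{remk:G-inv-red} to produce the $G$-invariant closed subscheme.
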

\begin{proof}
Since the $eN$-topology is regular, complete and
bounded by \cite[Theorem~2.3]{HKO}, it suffices to show using
\cite[Proposition~3.8]{Voev} that $\sK^G$ takes a square
of the type ~\eqref{eqn:N-1} to a homotopy Cartesian square of spectra.
In other words, we need to show that the square
\begin{equation}\label{eqn:MV1}
\xymatrix@C1pc{
\K([X/G]) \ar[r]^{j^*} \ar[d]_{p^*} & \K([A/G]) \ar[d]^{p'^*} \\
\K([Y/G]) \ar[r]_{j'^*} & \K([B/G])}
\end{equation}
is homotopy Cartesian.
But this is an immediate consequence of \thmref{thm:K-G-main}.
\end{proof}

\begin{cor}\label{cor:MNDS*}
Let $G$ be a nice group scheme over $k$ 
and let $X \in \Sch^G_{k/\Nis}$.
Then there is a strongly convergent spectral sequence
\[
E^{p,q}_2 = H^p_{eN}\left(X, \sK^G_q\right) \Rightarrow
\K_{q-p}([X/G]).
\]
\end{cor}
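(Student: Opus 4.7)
The plan is to invoke the standard descent spectral sequence for a presheaf of $S^1$-spectra on a site of finite cohomological dimension, applied to the presheaf $\sK^G$ on $\Sch^G_{k/\Nis}$.

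First, I would choose a global injective fibrant replacement $\sK^G \to \widetilde{\sK^G}$ in the local injective model structure on $Pres(\Sch^G_{k/\Nis})$. By \thmref{thm:MNDS}, $\sK^G$ satisfies $eN$-descent, which means this map is a global weak equivalence; in particular, for the object $X \in \Sch^G_{k/\Nis}$, the induced map on spectra $\K([X/G]) = \sK^G(X) \to \widetilde{\sK^G}(X)$ is a weak equivalence of $S^1$-spectra, and hence induces isomorphisms on all stable homotopy groups.

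Next, I would use the Brown--Gersten/Thomason hypercohomology spectral sequence associated to the Postnikov tower of the globally fibrant presheaf $\widetilde{\sK^G}$, evaluated at $X$. This is the standard construction (see, for instance, \cite[\S 6]{TT} or Jardine's work on presheaves of spectra): for any globally fibrant presheaf of spectra $\sF$ on a site, there is a conditionally convergent spectral sequence
\[
E^{p,q}_2 = H^p\bigl(X, a\pi_q\sF\bigr) \Rightarrow \pi_{q-p}\sF(X),
\]
where $a\pi_q\sF$ denotes the sheafification (in the given topology) of the presheaf $U \mapsto \pi_q\sF(U)$. Applied to $\widetilde{\sK^G}$ in the $eN$-topology, and using that sheafification is insensitive to replacing $\widetilde{\sK^G}$ by $\sK^G$, the $E_2$-term becomes $H^p_{eN}(X, \sK^G_q)$, while the abutment is $\pi_{q-p}\widetilde{\sK^G}(X) \cong \K_{q-p}([X/G])$ by the previous paragraph.

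Finally, strong convergence requires bounding the cohomological dimension of the $eN$-topology on the slice over $X$. This is exactly \cite[Corollary~2.11]{HKO}, which was recalled before \defref{def:Splitting}: for any sheaf of abelian groups $\sF$ on $\Sch^G_{k/\Nis}$, one has $H^i_{eN}(X, \sF) = 0$ for $i > \dim(X)$. Hence the $E_2$-page vanishes in a horizontal strip of finite width, so the spectral sequence has bounded filtration and converges strongly, yielding the stated spectral sequence. The only step that would require any care is the identification of the abutment, but this is immediate from $eN$-descent as established in \thmref{thm:MNDS}, so there is no real obstacle.
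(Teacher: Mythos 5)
Your proof is correct and follows the same route as the paper, which simply cites Theorem~\ref{thm:MNDS} together with \cite[Theorem~2.3, Corollary~2.11]{HKO}: the descent theorem identifies the abutment via the globally fibrant replacement, and the bounded cohomological dimension of the $eN$-topology upgrades the conditionally convergent hypercohomology spectral sequence to a strongly convergent one. Your write-up just makes explicit the standard Postnikov-tower argument that the paper leaves implicit.
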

\begin{proof} This is immediate from Theorem~\ref{thm:MNDS}
and \cite[Theorem~2.3, Corollary~2.11]{HKO}.
\end{proof}

\section{Homotopy invariance of $K$-theory with 
coefficients for quotient stacks}\label{sec:Finite}
It is known that with finite coefficients,
the ordinary algebraic $K$-theory of schemes 
satisfies the homotopy invariance property 
(see \cite[Theorem~1.2, Proposition~1.6]{Weibel-1} for affine
schemes and \cite[Theorem~9.5]{TT} for the general case).
This is a hard result which was achieved by first defining a homotopy invariant version
of algebraic $K$-theory \cite{Weibel-1} and then showing that
with finite coefficients, this homotopy (invariant) $K$-theory coincides with 
the algebraic $K$-theory.

However, the proof of the agreement between algebraic $K$-theory and
homotopy $K$-theory with finite coefficients requires the knowledge of
a spectral sequence relating $NK$-theory and homotopy $K$-theory
\cite[Remark~1.3.1]{Weibel-1}.
Recall here that $NK(X)$ denotes the homotopy fiber of the pull-back map 
$\iota^*$, where $\iota: X \inj \A^1_k \times X$ denotes the 
$0$-section embedding into the trivial line bundle over a scheme $X$.
The existence of
homotopy $K$-theory for quotient stacks  is not yet known
and one does not know if the above spectral sequence would exist 
for the homotopy $K$-theory of quotient stacks.
In this section, we adopt a different strategy to extend the 
results of Weibel and Thomason-Trobaugh to the $K$-theory of nice
quotient stacks (see \thmref{thm:Fin-coeff}).

\subsection{Homotopy $K$-theory of stacks}\label{sec:HKT}
For $n \in \N$, let $\Delta_n = \Spec\left(
\frac{k[t_0, \cdots , t_n]}{(\sum_i t_i - 1)}\right)$.
Recall that $\Delta_{\bullet} = \{\Delta_n\}_{n \ge 0}$
forms a simplicial scheme whose face and degeneracy maps are
given by the formulas
\[
\partial_r(t_j) = \left\{ \begin{array}{ll}
t_j  & \mbox{if $j < r$} \\
0  & \mbox{if $j = r$} \\
t_{j-1} & \mbox{if $j > r$}
\end{array} \right. \quad
\delta_r(t_j) = \left\{ \begin{array}{ll}
t_j  & \mbox{if $j < r$} \\
t_j + t_{j+1}  & \mbox{if $j = r$} \\
t_{j+1} & \mbox{if $j > r$.}
\end{array} \right.
\]

\begin{defn}\label{defn:HIKT}
For a nice quotient stack $\sX$ with resolution property,
the {\sl homotopy $K$-theory} is defined to be the spectrum
\[
KH(\sX) = {\rm hocolim}_n \K(\sX \times \Delta_n).
\]
\end{defn}

It is clear from the definition that $KH(\sX)$ is contravariant with respect
to morphisms of stacks. Furthermore, there is a natural map of spectra
$\K(\sX) \to KH(\sX)$. 
It is well known that $\K(\sX)$ is not a homotopy invariant functor.
Our first result on $KH(\sX)$ is the following.

\begin{thm}\label{thm:HK-Inv}
Let $\sX$ be a nice quotient stack with resolution property
and let $f: \sE \to \sX$ be a vector bundle morphism.
Then the pull-back map $f^*: KH(\sX) \to KH(\sE)$ is a homotopy 
equivalence.
\end{thm}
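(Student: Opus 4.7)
The plan is to deduce the theorem from two ingredients: (i) the $\A^1$-invariance of $KH$ on nice quotient stacks with resolution property, i.e., the assertion that for any such stack $\sY$ the pull-back $p^*\colon KH(\sY)\to KH(\sY\times\A^1)$ along the projection is a homotopy equivalence, and (ii) a short formal argument using scalar multiplication on $\sE$ as an $\A^1$-homotopy between $\id_\sE$ and $s\circ f$, where $s\colon\sX\to\sE$ is the zero section.

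For (i), the zero section $\iota_0\colon\sY\to\sY\times\A^1$ satisfies $p\circ\iota_0=\id_\sY$, so $\iota_0^*\circ p^*=\id$ on $KH(\sY)$ automatically; the content is the converse composition. Unwinding
\[
KH(\sY\times\A^1) \;=\; \hocolim_n\K(\sY\times\A^1\times\Delta_n),
\]
I would run the classical ``extra-degeneracy'' / simplicial-homotopy argument of Weibel, adapted to stacks. The argument exploits the multiplication $\A^1\times\A^1\to\A^1$ together with the identification $\A^1\cong\Delta_1$, producing at each simplicial level a compatible contraction of the extra $\A^1$ factor. All formal inputs—functoriality of $\K$, Bass' fundamental theorem (\thmref{thm:Bass-fund}), localization (\thmref{thm:equiv-localization-thm}), Nisnevich descent (\thmref{thm:MNDS}), and the projective bundle formula (\thmref{thm:proj-bdle-thm})—are available from \secref{sec:KTH} and \secref{sec:Nis-desc}.

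Granted (i), the theorem follows formally. The scalar multiplication $\mu\colon\sE\times\A^1\to\sE$, $(e,t)\mapsto t\cdot e$, is a morphism of stacks (since $\sE$ is a linear fiber space) with $\mu\circ i_1=\id_\sE$ and $\mu\circ i_0=s\circ f$, where $i_0,i_1\colon\sE\to\sE\times\A^1$ are the two constant sections of the projection $p_\sE\colon\sE\times\A^1\to\sE$. Applying (i) to $\sY=\sE$—which is again a nice quotient stack with resolution property, being the quotient $[E/G]$ of the underlying $G$-equivariant vector bundle on $X$, with the resolution property inherited via the affine morphism $E\to X$ and \lemref{lem:Thom-Res}—the map $p_\sE^*$ is a homotopy equivalence on $KH$. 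Since both $i_0^*$ and $i_1^*$ are left inverses to $p_\sE^*$, they must coincide, and hence
\[
f^*\circ s^* \;=\; (s\circ f)^* \;=\; i_0^*\circ\mu^* \;=\; i_1^*\circ\mu^* \;=\; \id_\sE^* \;=\; \id_{KH(\sE)}.
\]
Combined with $s^*\circ f^*=(f\circ s)^*=\id_{KH(\sX)}$, this shows that $f^*$ is a homotopy equivalence.

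The main obstacle is step (i). The scheme-theoretic proof is a concrete but delicate simplicial manipulation; in the stack setting the essential extra work is only to check that all the simplicial/cosimplicial morphisms appearing in the argument are genuine morphisms of stacks (obtained by base change from the cosimplicial structure on $\Delta_\bullet$ and from multiplication on $\A^1$) and that $\K$ is functorial under them—both of which are automatic from the framework developed in \secref{sec:KTH}. A minor side issue is the verification that $\sE$ has the resolution property, addressed above.
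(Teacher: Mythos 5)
Your proposal is correct and follows essentially the same route as the paper: both reduce to the $\A^1$-invariance $KH(\sY)\simeq KH(\sY[T])$ via Weibel's simplicial argument, and then use the scalar-multiplication homotopy $\mu\colon \sE\times\A^1\to\sE$ (the paper writes it as the $G$-equivariant contraction $H\colon \A^1\times E\to E$ on the atlas) together with the observation that $i_0^*=i_1^*$ since both are inverse to $p^*$. Your explicit remark that $\sE$ inherits the nice-quotient-stack and resolution properties is a point the paper leaves implicit, and is a welcome addition.
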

\begin{proof}
We first show that the
map $KH(\sX) \to KH(\sX\times \Delta_n)$ is a homotopy equivalence
for every $n \ge 0$. But this is essentially a direct consequence
of the definition of $KH$-theory. By identifying $\Delta_n$ with
$\A^n_k$ and using induction, one needs to show that the map
$KH(\sX) \to KH(\sX[T])$ is a homotopy equivalence.
Proof of this is identical to the case of the 
$KH$-theory of schemes \cite[Theorem~1.2]{Weibel-1}.

To prove the general case, we write $\sX = [X/G]$, where $G$ is a group scheme 
over $k$ acting on a $k$-scheme $X$. We let $E = u^*(\sE)$, where
$u: X \to \sX$ is the quotient map. Then $E$ is a $G$-equivariant vector
bundle on $X$ such that $[E/G] \simeq \sE$. 

We consider the standard fiberwise
contraction map $H: \A^1_k \times E \to E$. Explicitly,
for an open affine $U = \Spec(A) \subseteq X$ over which $f$ is trivial
(without $G$-action), $H|_U$ is induced by the $k$-algebra homomorphism
$A[X_1, \cdots , X_n] \to A[X_1, \cdots , X_n, T]$, given by
$X_j \mapsto TX_j$. It is clear from this that this defines a unique
map $H$ as above which is $G$-equivariant for the trivial $G$-action on 
$\A^1_k$. We have the commutative diagram
\begin{equation}\label{eqn:HK-Inv-0}
\xymatrix@C2pc{
& \{1\} \times E \ar[dl]_{{\rm id}} \ar[d]_{i_1} \ar[dr]^{h_1} & \\
E & \A^1_k \times E \ar[l]_{p} \ar[r]^>>>>>{H} & E \\
& \{0\} \times E, \ar[ul]^{{\rm id}} \ar[u]_{i_0} \ar[ur]_{h_0} &}
\end{equation}
where $h_j = H \circ i_j$ for $j = 0,1$ and $p$ is the projection map.

Let $\iota: X \inj E$ denote the 0-section embedding so that
$f \circ \iota = {\rm id}_X$. So we only need to show that
$f^* \circ \iota^*$ is identity on $KH([E/G])$. Since $h_0 = \iota \circ f$,
it suffices to show that $h^*_0$ is identity.

It follows from the weaker version of homotopy invariance shown
above (applied to $E$) that $p^*$ is an isomorphism on the $KH$-theory of the
stack quotients.
In particular, $i^*_0 = (p^*)^{-1} = i^*_1$.
Since $h_1 = {\rm id}_E$, we get $i^*_1 \circ H^* = {\rm id}$
which in turn yields $H^* = (i^*_1)^{-1} = p^*$ and hence
$h^*_0 = i^*_0 \circ H^* = i^*_0 \circ p^* = {\rm id}$.
This finishes the proof.
\end{proof}

\subsection{Proof of \thmref{thm:Main-2}}\label{sec:Main2*}
The proof of \thmref{thm:Main-2} is a direct consequence of the definition of
$KH(\sX)$ and similar results for the $\K$-theory.
Part (1) of the theorem is \thmref{thm:HK-Inv}.
Part (2) follows directly from Theorems~\ref{thm:K-G-main} and
~\ref{thm: K-theory-descent-reg-blow-up} because the homotopy colimit
preserves homotopy fiber sequences. 

We now prove (3). Let $G$ be a finite group acting on a 
scheme $X$ such that $X$ admits an ample family of line bundles. 
Then $X$ is covered by $G$-invariant affine open subschemes.
By \thmref{thm:MNDS}, it suffices to prove the theorem when $X = \Spec(A)$
is affine. In this case, $\K([X/G])$ is homotopy equivalent to
the $K$-theory of the exact category $\sP^G(A)$ of finitely generated
$G$-equivariant projective $A$-modules (see \lemref{lem:K-theory-Q.stack}).  

Since $G$ is also assumed to be nice, it follows from \cite[Lemma~1.3]{LS} that
$\sP^G(A)$ is equivalent to the exact category $\sP(A^{\rm tw}[G])$ of
finitely generated projective $A^{\rm tw}[G]$-modules.
Recall here that $A^{\rm tw}[G] = \bigoplus_{g \in G} \ Ae_g$
and the product is defined by $(r_g \cdot e_g) (r_h \cdot e_h)
= r_g \cdot (r_h \star g^{-1}) e_{gh}$, where $\star$ indicates the
$G$-action on $A$. 

If $I$ is a nilpotent ideal of $A$ with quotient $B = A/I$, 
it follows from \lemref{lem:twist} that
the map $A^{\rm tw}[G] \to B^{\rm tw}[G]$ is surjective and its
kernel is a nilpotent ideal of $A^{\rm tw}[G]$.
We now apply \cite[Theorem~2.3]{Weibel-1} to conclude that
the map $KH(A^{\rm tw}[G]) \to KH({(A/I)}^{\rm tw}[G])$
is a homotopy equivalence. Since $G$ acts trivially on $\Delta_{\bullet}$, 
there is a canonical isomorphism $(A[\Delta_{\bullet}])^{\rm tw}[G] \simeq
(A^{\rm tw}[G])[\Delta_{\bullet}]$.
We conclude that the map $KH([{\Spec(A)}/G]) \to KH([{\Spec(B)}/G])$ is a 
homotopy equivalence. This finishes the proof.
$\hfill \square$

\begin{lem}\label{lem:twist}
Let $G$ be a finite group acting on commutative unital rings $A$
and $B$. Let $A \surj B$ be a $G$-equivariant surjective ring 
homomorphism whose kernel is nilpotent. Then the induced
map $A^{\rm tw}[G] \to B^{\rm tw}[G]$ is surjective and its
kernel is nilpotent.
\end{lem}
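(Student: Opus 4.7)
The plan is to identify the kernel of the induced map explicitly and then bound its powers using the multiplication rule in the twisted group ring.

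First, surjectivity is immediate: every generator $b e_g \in B^{\rm tw}[G]$ with $b \in B$ has a preimage $a e_g$ for any lift $a \in A$ of $b$, and these generators span $B^{\rm tw}[G]$ as an abelian group. Let $I = \ker(A \to B)$, which is nilpotent by hypothesis, say $I^n = 0$. Because the ring homomorphism $A \to B$ is $G$-equivariant, the ideal $I \subset A$ is $G$-stable, i.e.\ $I \star g \subseteq I$ for every $g \in G$. The kernel of $\phi : A^{\rm tw}[G] \to B^{\rm tw}[G]$ is then
\[
J = \bigoplus_{g \in G} I \, e_g,
\]
and the $G$-stability of $I$ together with the multiplication rule $(r_g e_g)(r_h e_h) = r_g (r_h \star g^{-1}) e_{gh}$ shows that $J$ is a two-sided ideal of $A^{\rm tw}[G]$.

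Next I would show $J^n = 0$. Take $m$ elements of the form $x_{g_i} e_{g_i}$ with $x_{g_i} \in I$. Multiplying them out using the twisting formula yields, by a straightforward induction on $m$,
\[
(x_{g_1} e_{g_1}) \cdots (x_{g_m} e_{g_m}) = x_{g_1}\,(x_{g_2} \star g_1^{-1})\,(x_{g_3} \star (g_1 g_2)^{-1}) \cdots (x_{g_m} \star (g_1 \cdots g_{m-1})^{-1})\, e_{g_1 \cdots g_m}.
\]
Each factor $x_{g_i} \star h_i$ lies in $I$ by $G$-stability, so the coefficient in front of $e_{g_1 \cdots g_m}$ lies in $I^m$. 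Since $J$ is spanned by elements of the form $x_g e_g$ with $x_g \in I$, we conclude $J^m \subseteq \bigoplus_g I^m e_g$, and in particular $J^n = 0$.

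No step here is genuinely hard; the only potential subtlety is keeping track of the $G$-action when verifying that $J$ is a two-sided ideal and that the iterated product lands in $I^m$. These both rest on the single observation that $I$ is $G$-invariant, which is forced by $G$-equivariance of $A \to B$. Nothing beyond the explicit description of the twisted multiplication is required, and the argument is insensitive to whether the $G$-action on $A$ or $B$ is faithful.
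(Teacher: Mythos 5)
Your proof is correct and follows essentially the same route as the paper: identify the kernel as $\bigoplus_{g\in G} I e_g$, note that $I$ is $G$-invariant by equivariance, and observe that the twisted multiplication forces the $n$-th power of this ideal to have coefficients in $I^n = 0$. Your explicit formula for the iterated product just spells out the step the paper states more briefly.
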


\begin{proof}
Let $I$ denote the kernel of $f: A \surj B$. By hypothesis, there exists an 
integer $n$ such that $I^n = 0$. Since the induced map 
$A^{\rm tw}[G] \to B^{\rm tw}[G]$ is a 
$G$-graded homomorphism induced by $f$ on each graded piece, it is 
a surjection and its kernel is given by 
$I^{\rm tw}[G] = \bigoplus_{g \in G} Ie_g$.
Since $I$ is a $G$-invariant ideal of $A$,
each element of $(I^{\rm tw}[G])^n$ is of the form 
$(a_1.e_{g_1}+ \cdots +a_m.e_{g_m})$, where $g_i \in G$ and $a_i \in I^n$.
Therefore, $(I^{\rm tw}[G])^n = 0$.
\end{proof}

\subsection{$\K$-theory of stacks with coefficients}\label{sec:KG-coeff}
For an integer $n \in \N$, let
\[
\K(\sX ; \Z[1/n]):= {\rm hocolim}(\K(\sX) \xrightarrow{\cdot n}
\K(\sX) \xrightarrow{\cdot n} \cdots) \ \ {\rm and} \ \
\K(\sX ; {\Z}/n):= \K(\sX) \wedge {\mathbb{S}}/{n},
\] 
where ${{\mathbb{S}}}/{n}$ is the mod-$n$ Moore spectrum.
Our final result is the homotopy invariance property of
$\K$-theory with coefficients.

The proof of Theorem \ref{thm:Fin-coeff} uses the
notion of $K$-theory of dg-categories. We briefly
recall its definition and refer to \cite[\S~5.2]{Keller-1} for further
details.
Let $\sA$ be a small dg-category. 
The category $D(\sA)$ is the localization of the category of dg $\sA$-modules 
with respect to quasi-isomorphisms.
The category of perfect objects 
${\rm Per}(\sA)$ is the smallest triangulated subcategory of 
$D(\sA)$ containing the representable objects and closed 
under shifts, extensions and direct factors.
The algebraic $K$-theory of $\sA$ is defined to be the
$K$-theory spectrum of the Waldhausen category ${\rm Per}(\sA)$ where
the cofibrations are the degree-wise split monomorphisms and the
weak equivalences are the quasi-isomorphisms. 

\begin{thm}
\label{thm:Fin-coeff}
Let $\sX$ be a nice quotient stack over $k$ with resolution property
and let $f: \sE \to \sX$ be a vector bundle. Then the following hold.
\begin{enumerate}
\item
For any integer $n$ invertible in $k$, the map
$f^*: \K(\sX ; {\Z}/n) \to \K(\sE ; {\Z}/n)$ is a homotopy equivalence.
\item
For any integer $n$ nilpotent in $k$, the map
$f^*: \K(\sX ; {\Z}[{1}/n]) \to \K(\sE ; {\Z}[1/n])$ is a homotopy equivalence.
\end{enumerate}
\end{thm}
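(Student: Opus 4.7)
The plan is to show that, for the specified coefficients, the natural comparison map $\K(\sY;C) \to KH(\sY;C)$ is a homotopy equivalence for every nice quotient stack $\sY$ with resolution property, where $C = \Z/n$ in (1) and $C = \Z[1/n]$ in (2). Granted this, the theorem is immediate from the homotopy invariance of $KH$ along vector bundles proved in \thmref{thm:HK-Inv}, since smashing with the Moore spectrum $\mathbb{S}/n$ and inverting $n$ both preserve weak equivalences of spectra and commute with pullbacks along $f$.

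To prove the comparison, I would pass to the dg-category framework already prepared in the preamble to the theorem. The category $\mathrm{Perf}(\sY)$ admits a natural pretriangulated dg-enhancement $\sA_{\sY}$ over $k$, and $\K(\sA_{\sY}) \simeq \K(\sY)$ by \lemref{lem:K-theory-Q.stack} together with \cite[\S5.2]{Keller-1}. Now $\A^1$-pullback on $\sY$ corresponds to tensoring $\sA_{\sY}$ with $k[T]$, and $KH(\sA_{\sY}) = \mathrm{hocolim}_{n} \K(\sA_{\sY} \otimes_k k[\Delta^n])$. The comparison $\K(\sA;C) \to KH(\sA;C)$ is then known to be a homotopy equivalence for any small $k$-linear dg-category $\sA$ when $n$ is invertible (resp.\ nilpotent) in $k$: this is a dg-categorical enhancement of Weibel's theorem \cite[Theorem~1.2, Proposition~1.6]{Weibel-1} and \cite[Theorem~9.5]{TT}.

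An alternative route is via equivariant Nisnevich descent. \thmref{thm:MNDS}, combined with the fact that $(-)\wedge\mathbb{S}/n$ and $(-)\wedge\mathbb{S}[1/n]$ preserve homotopy fiber sequences and hence descent, reduces the comparison $\K(-;C) \simeq KH(-;C)$ to the affine case $\sY = [\Spec(A)/G]$ (using \lemref{lem:Thom-Res}(2) to obtain a $G$-invariant affine cover of $X$ when $\sY = [X/G]$). The argument in the proof of \thmref{thm:Main-2}(3) then identifies $\K$---and, applied to $A[\Delta^\bullet]$ where $G$ acts trivially on the simplex, also $KH$---of $[\Spec(A)/G]$ with that of the twisted group algebra $R = A^{\rm tw}[G]$, compatibly with the comparison map. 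Weibel's classical theorem for associative unital rings then finishes the argument.

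The main obstacle is precisely the comparison $\K(-;C) \simeq KH(-;C)$. In the dg-categorical approach, one must carefully lift Weibel's argument---which depends on a Bass-type fundamental theorem and the structure of the $NK$-filtration---to the dg-setting. In the descent approach, one must check that Weibel's arguments, usually phrased for commutative rings, extend to the non-commutative twisted group ring $R = A^{\rm tw}[G]$; this is feasible because the only non-formal input is the Bass fundamental theorem, which holds for $R$ via \thmref{thm:Bass-fund} applied to $\sY$. A small additional subtlety is that inverting $n$ in the coefficient spectrum must interact correctly with the homotopy colimit defining $KH$, but this is automatic as filtered homotopy colimits commute with Bousfield localizations.
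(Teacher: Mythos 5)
Your primary (dg-categorical) route is essentially the paper's own proof: the paper likewise passes to the dg enhancement of $\mathrm{Perf}(\sX)$, invokes Tabuada's $\A^1$-homotopy invariance of $K$-theory with $\Z/n$ (resp.\ $\Z[1/n]$) coefficients for dg-categories \cite[Theorem~1.2]{Tabuada} --- which is the precise reference for the ``dg-categorical enhancement of Weibel's theorem'' you leave unnamed --- and then passes from $\sX[T]\to\sX$ to arbitrary vector bundles by the contraction argument of \thmref{thm:HK-Inv}, an equivalent repackaging of your reduction through $KH$. Note only that your alternative descent route is limited to finite $G$ (the twisted group algebra $A^{\rm tw}[G]$ makes sense only there), so it cannot replace the dg-categorical argument for general nice group schemes.
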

\begin{proof}
The category $\rm Perf(\sX)$ has a natural dg enhancement \cite[Example~5.5]{CT}
whose algebraic $K$-theory (in the sense of $K$-theory of dg-categories)
coincides with $\K(\sX)$ by \cite[Theorem~5.1]{Keller-1}.
It follows from \propref{prop:Com-perf} and \cite[Proposition~8.4]{HR}
that $D_{qc}(\sX)$ is compactly generated and every perfect complex
on $\sX$ is compact. We conclude 
from \cite[Theorem~1.2]{Tabuada} that the theorem holds when
$f$ is the projection map $\sX[T] \to \sX$. To prove the general case, we 
use ~\eqref{eqn:HK-Inv-0} and repeat the 
argument of \thmref{thm:HK-Inv} in verbatim.
\end{proof}

 
\begin{cor}\label{cor:KG-KH}
Let $\sX$ be as in \thmref{thm:Fin-coeff}. Then the following hold.
\begin{enumerate}
\item
For any integer $n$ invertible in $k$, the map
$f^*: \K(\sX ; {\Z}/n) \to KH(\sX; {\Z}/n)$ is a homotopy equivalence.
\item
For any integer $n$ nilpotent in $k$, the map
$f^*: \K(\sX ; {\Z}[{1}/n]) \to KH(\sX; {\Z}[1/n])$ is a homotopy equivalence.
\end{enumerate}
\end{cor}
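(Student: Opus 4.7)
The map in the corollary statement written as $f^*$ must mean the canonical comparison map $\K(\sX) \to KH(\sX)$ (and its coefficient variants), since $f : \sE \to \sX$ would not yield a map with target involving $\sX$ on both sides; the plan is based on this interpretation.

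The overall strategy is to commute the coefficient construction past the homotopy colimit defining $KH$, and then apply \thmref{thm:Fin-coeff} level-wise. First, I would note that both $(-) \wedge \mathbb{S}/n$ and the mapping telescope along multiplication by $n$ are themselves homotopy colimits of spectra (the former a cofiber, the latter a sequential hocolim), and so commute with the hocolim appearing in \defref{defn:HIKT}. This yields identifications
\[
KH(\sX; \Z/n) \simeq \hocolim_{m \in \Delta^{\op}} \K(\sX \times \Delta_m; \Z/n),
\quad
KH(\sX; \Z[1/n]) \simeq \hocolim_{m \in \Delta^{\op}} \K(\sX \times \Delta_m; \Z[1/n]).
\]

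Next I would verify that the projection $p_m : \sX \times \Delta_m \to \sX$ is, for each $m$, a vector bundle between nice quotient stacks with the resolution property, so that \thmref{thm:Fin-coeff} applies. Writing $\sX = [X/G]$, we have $\sX \times \Delta_m = [(X \times \A^m_k)/G]$ with trivial $G$-action on $\A^m_k$, and the existence of a $G$-equivariant ample family of line bundles on $X$ pulls back to one on $X \times \A^m_k$ by \lemref{lem:Thom-Res}, giving the resolution property. Under the identification $\Delta_m \cong \A^m_k$, the projection $p_m$ is then a genuine vector bundle morphism, so \thmref{thm:Fin-coeff} yields homotopy equivalences
\[
p_m^* : \K(\sX; \Z/n) \xrightarrow{\simeq} \K(\sX \times \Delta_m; \Z/n),
\qquad
p_m^* : \K(\sX; \Z[1/n]) \xrightarrow{\simeq} \K(\sX \times \Delta_m; \Z[1/n]).
\]

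Finally, since each $p_m$ factors the unique morphism $\Delta_m \to \mathrm{pt}$, the family $\{p_m^*\}_{m}$ assembles into a morphism of simplicial spectra from the constant simplicial object $\K(\sX; \Z/n)$ to $\K(\sX \times \Delta_\bullet; \Z/n)$ (and likewise for $\Z[1/n]$). This morphism is a level-wise equivalence by the previous step, hence induces an equivalence on homotopy colimits; the hocolim of a constant simplicial spectrum is equivalent to its value, so the composite equivalence is precisely the canonical map $\K(\sX; \Z/n) \to KH(\sX; \Z/n)$, proving (1), and the same argument with $\Z[1/n]$ coefficients proves (2).

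The only genuine input beyond bookkeeping is \thmref{thm:Fin-coeff}, which is already proved; the rest is formal. The one step that deserves a moment of care is the first, namely that smashing with the Moore spectrum and the mapping telescope for $\Z[1/n]$ both commute with the homotopy colimit defining $KH$. This is standard in the stable homotopy category, but I would record it explicitly since the result fails in unstable contexts and is the conceptual heart of why homotopy invariance of $\K$ with coefficients forces agreement with $KH$ with coefficients.
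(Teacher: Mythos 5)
Your proof is correct and is exactly the argument the paper intends (the corollary is stated without proof as an immediate formal consequence of Theorem~\ref{thm:Fin-coeff}): commute $(-)\wedge\mathbb{S}/n$ and the localization telescope past the homotopy colimit defining $KH$, and apply the theorem levelwise to the projections $\sX\times\Delta_m\to\sX$. One small imprecision: the hypothesis is only that $\sX$ has the resolution property, not that $X$ carries a $G$-equivariant ample family, so the resolution property of $\sX\times\Delta_m$ should instead be justified by \cite[Lemma~2.6]{Thom1} as in the proof of Theorem~\ref{thm:Bass-fund} (it is in any case already needed for $KH(\sX)$ to be defined).
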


\noindent\emph{Acknowledgements.} 
The authors would like to thank the referees for providing useful suggestions
on improving the contents and the presentation of this paper.

\end{document}